\def\today{${\scriptscriptstyle\number\day-\number\month-\number\year}$}
\newtheorem{theorem}{Theorem}[section]
\newtheorem{lemma}[theorem]{Lemma}
\theoremstyle{definition}
\newtheorem{definition}[theorem]{Definition}
\newtheorem{remark}[theorem]{Remark}
\newtheorem{assumption}[theorem]{Assumption}
\newtheorem{notation}[theorem]{Notation}
\renewcommand{\theequation}{\thesection.\arabic{equation}}
\def\address#1{{\center{#1}}}
\date{}
\def\m@th{\mathsurround=0pt}
\def\eqal#1{\null\,\vcenter{\openup\jot\m@th
\ialign{\strut\hfil$\displaystyle{##}$&&$\displaystyle{{}##}$\hfil
 \crcr#1\crcr}}\,}
\def\matrix#1{\null\,\vcenter{\normalbaselines\m@th
 \ialign{\hfil$##$\hfil&&\quad\hfil$##$\hfil\crcr
 \mathstrut\crcr\noalign{\kern-\baselineskip}
 #1\crcr\mathstrut\crcr\noalign{\kern-\baselineskip}}}\,}
\def\N{{\Bbb N}}
\def\R{{\Bbb R}}
\def\divv{{\rm div}\,}
\def\rot{{\rm rot}\,}
\def\arctg{{\rm arctg}\,}
\def\tg{{\rm tg}\,}
\def\Im{{\rm Im}\,}
\def\\{\hfil\break}
\numberwithin{equation}{section}
\def\bye{\end{document}}
\title{Global regular axially symmetric solutions to the Navier-Stokes equations.\\ Part 1}
\author{Wojciech M. Zaj\c{a}czkowski}
\begin{document}
\input amssym.def
\input amssym.tex
\maketitle
\thispagestyle{fancy}

\address{Institute of Mathematics, Polish Academy of Sciences,\\
\'Sniadeckich 8, 00-656 Warsaw, Poland\\
e-mail:wz@impan.pl\\
Institute of Mathematics and Cryptology, 
Cybernetics Faculty, \\
Military University of Technology,\\
S. Kaliskiego 2, 00-908 Warsaw, Poland\\}

\begin{abstract}
The axially-symmetric solutions to the Navier-Stokes equations are considered in a bounded cylinder $\Omega\subset\R^3$ with the axis of symmetry. $S_1$ is the boundary of the cylinder parallel to the axis of symmetry and $S_2$ is perpendicular to it. We have two parts of $S_2$. For simplicity, we assume the periodic boundary conditions on $S_2$. On $S_1$ we impose vanishing of the normal component of velocity, angular component of velocity and the angular component of vorticity. We prove the existence of global regular solutions. To prove this we need that the coordinate of velocity along the axis of symmetry must vanish on it. We have to emphasize that the technique of weighted spaces applied to the stream function plays a crucial role in the proof of global regular axially symmetric solutions.
\end{abstract}

\noindent
Key words: Navier-Stokes equations, axially-symmetric solutions, cylindrical domain, existence of global regular solutions

\section{Introduction}\label{s1}

In this paper we prove the existence of global regular axially-symmetric solutions to the Navier-Stokes equations in a cylindrical domain $\Omega\subset\R^3$:
$$
\Omega=\{x\in\R^3\colon x_1^2+x_2^2<R^2,|x_3|<a\},
$$
where $a$, $R$ are given positive numbers.
We denote by $x=(x_1,x_2,x_3)$ Cartesian coordinates. It is assumed that the $x_3$-axis is the axis of symmetry of $\Omega$.

Moreover,
$$\eqal{
&S_1=\{x\in\R^3\colon\sqrt{x_1^2+x_2^2}=R,x_3\in(-a,a)\},\cr
&S_2(a_0)=\{x\in\R^3\colon\sqrt{x_1^2+x^2}<R,x_3=a_0\in\{-a,a\}\},\cr}
$$
where $S_1$ is parallel to the axis of symmetry and $S_2(a_0)$ is perpendicular to it. $S_2(a_0)$ meets the axis of symmetry at $a_0$.

To describe the considered problem we introduce cylindrical coordinates $r$, $\varphi$, $z$ by the relations
\begin{equation}
x_1=r\cos\varphi,\quad x_2=r\sin\varphi,\quad x_3=z.
\label{1.1}
\end{equation}
The following orthonormal system
\begin{equation}
\bar e_r=(\cos\varphi,\sin\varphi,0),\ \ \bar e_\varphi=(-\sin\varphi,\cos\varphi,0),\ \ \bar e_z=(0,0,1)
\label{1.2}
\end{equation}
is connected with the cylindrical coordinates.

Any vector $u$ for the axially symmetric motions can be decomposed as follows
\begin{equation}
u=u_r(r,z,t)\bar e_r+u_\varphi(r,z,t)\bar e_\varphi+u_z(r,z,t)\bar e_z,
\label{1.3}
\end{equation}
where $u_r$, $u_\varphi$, $u_z$ are cylindrical coordinates of $u$.

Therefore, velocity $v$ and vorticity $\omega=\rot v$ are decomposed in the form
\begin{equation}
v=v_r(r,z,t)\bar e_r+v_\varphi(r,z,t)\bar e_\varphi+v_z(r,z,t)\bar e_z
\label{1.4}
\end{equation}
and
\begin{equation}
\omega=\omega_r(r,z,t)\bar e_r+\omega_\varphi(r,z,t)\bar e_\varphi+\omega_z(r,z,t)\bar e_z.
\label{1.5}
\end{equation}
The paper is devoted to a proof of global regular axially-symmetric solutions to the problem
\begin{equation}\eqal{
&v_{,t}+v\cdot\nabla v-\nu\Delta v+\nabla p=f\quad &{\rm in}\ \ \Omega^T=\Omega\times(0,T),\cr
&\divv v=0\quad &{\rm in}\ \ \Omega^T,\cr
&v\ \textrm{satisfies periodic boundary conditions}\quad &{\rm on}\ \  S_2^T=S_2\times(0,T),\cr
&v\cdot\bar n|_{S_1}=0,\ \ \omega_\varphi|_{S_1}=0,\ \ v_\varphi|_{S_1}=0\quad &{\rm on}\ \ S_1^T=S_1\times(0,T),\cr
&v|_{t=0}=v(0)\quad &{\rm in}\ \ \Omega,\cr}
\label{1.6}
\end{equation}
where $v=(v_1(x,t),v_2(x,t),v_3(x,t))\in\R^3$ is the velocity of the fluid,\break $p=p(x,t)\in\R$ is the pressure, $f=(f_1(x,t),f_2(x,t),f_3(x,t))\in\R^3$ is the external force field, $\nu>0$ is the constant viscosity coefficient.

Expressing problem (\ref{1.6}) in the cylindrical coordinates of velocity yields
\begin{equation}\eqal{
&v_{r,t}+v\cdot\nabla v_r-{v_\varphi^2\over r}-\nu\Delta v_r+\nu{v_r\over r^2}=-p_{,r}+f_r,\cr
&v_{\varphi,t}+v\cdot\nabla v_\varphi+{v_r\over r}v_\varphi-\nu\Delta v_\varphi+\nu{v_\varphi\over r^2}=f_\varphi,\cr
&v_{z,t}+v\cdot\nabla v_z-\nu\Delta v_z=-p_{,z}+f_z,\cr
&(rv_r)_{,r}+(rv_z)_{,z}=0\cr
&v_r|_{S_1}=0,\ \ v_\varphi|_{S_1}=0,\ \ v_{r,z}-v_{z,r}|_{S_1}=0,\cr
&v_r|_{t=0}=v_r(0),\ \ v_\varphi|_{t=0}=v_\varphi(0),\ \ v_z|_{t=0}=v_z(0),\cr}
\label{1.7}
\end{equation}
where we have the periodic boundary conditions on $S_2$ and
\begin{equation}\eqal{
&v\cdot\nabla=(v_r\bar e_r+v_z\bar e_z)\cdot\nabla=v_r\partial_r+v_z\partial_z,\cr
&\Delta u={1\over r}(ru_{,r})_{,r}+u_{,zz}.\cr}
\label{1.8}
\end{equation}
Formulating problem (\ref{1.1}) in terms of the cylindrical coordinates of vorticity implies
\begin{equation}\eqal{
&\omega_{r,t}+v\cdot\nabla\omega_r-\nu\Delta\omega_r+\nu{\omega_r\over r^2}=\omega_rv_{r,r}+\omega_zv_{r,z}+F_r,\cr
&\omega_{\varphi,t}+v\cdot\nabla\omega_\varphi-{v_r\over r}\omega_\varphi-\nu\Delta\omega_\varphi+\nu{\omega_\varphi\over r^2}={2\over r}v_\varphi v_{\varphi,z}+F_\varphi,\cr
&\omega_{z,t}+v\cdot\nabla\omega_z-\nu\Delta\omega_z= \omega_rv_{z,r}+\omega_zv_{z,z}+F_z,\cr
&\omega_r|_{t=0}=\omega_r(0),\ \ \omega_\varphi|_{t=0}=\omega_\varphi(0),\ \ \omega_z|_{t=0}=\omega(0)\cr}
\label{1.9}
\end{equation}
and we have boundary conditions $(\ref{1.7})_5$ on $S_1$ and the periodic boundary conditions on $S_2$, where $F=\rot f$ and
\begin{equation}
F=F_r(r,z,t)\bar e_r+F_\varphi(r,z,t)\bar e_\varphi+F_z(r,z,t)\bar e_z.
\label{1.10}
\end{equation}
The function
\begin{equation}
u=rv_\varphi
\label{1.11}
\end{equation}
is called swirl. It is a solution to the problem
\begin{equation}\eqal{
&u_{,t}+v\cdot\nabla u-\nu\Delta u+{2\nu\over r}u_{,r}=rf_\varphi\equiv f_0,\cr
&u|_{S_1}=0\ {\rm and}\ u\ \textrm{satisfies periodic boundary conditions on}\ S_2,\cr
&u|_{t=0}=u(0).\cr}
\label{1.12}
\end{equation}
The cylindrical components of vorticity can be described in terms of the cylindrical components of velocity and swirl in the following form
\begin{equation}\eqal{
&\omega_r=-v_{\varphi,z}=-{1\over r}u_{,z},\cr
&\omega_\varphi=v_{r,z}-v_{z,r},\cr
&\omega_z={1\over r}(rv_\varphi)_{,r}=v_{\varphi,r}+{v_\varphi\over r}={1\over r}u_{,r}.\cr}
\label{1.13}
\end{equation}
Equation $(\ref{1.7})_4$ implies existence of the stream function $\psi$ which is a solution to the problem
\begin{equation}\eqal{
&-\Delta\psi+{\psi\over r^2}=\omega_\varphi,\cr
&\psi|_{S_1}=0,\cr
&\psi\ \textrm{satisfies periodic boundary conditions on}\ S_2.\cr}
\label{1.14}
\end{equation}
Moreover, cylindrical components of velocity can be expressed in terms of the stream function in the following way
\begin{equation}\eqal{
&v_r=-\psi_{,z},\ \ v_z={1\over r}(r\psi)_{,r}=\psi_{,r}+{\psi\over r},\cr
&v_{r,r}=-\psi_{,zr},\ \ v_{r,z}=-\psi_{,zz},\cr
&v_{z,z}=\psi_{,rz}+{\psi_{,z}\over r},\ \ v_{z,r}=\psi_{,rr}+{1\over r}\psi_{,r}-{\psi\over r^2}.\cr}
\label{1.15}
\end{equation}
Introduce the pair
\begin{equation}
(\Phi,\Gamma)=(\omega_r/r,\omega_\varphi/r).
\label{1.16}
\end{equation}
Formula (\ref{1.6}) from \cite{CFZ} implies that quantities (\ref{1.16}) satisfy the following equations
\begin{equation}
\Phi_{,t}+v\cdot\nabla\Phi-\nu\bigg(\Delta+{2\over r}\partial_r\bigg)\Phi- (\omega_r\partial_r+\omega_z\partial_z){v_r\over r}=F_r/r\equiv\bar F_r
\label{1.17}
\end{equation}
and
\begin{equation}
\Gamma_{,t}+v\cdot\nabla\Gamma-\nu\bigg(\Delta+{2\over r}\partial_r\bigg)\Gamma+ 2{v_\varphi\over r}\Phi=F_\varphi/r\equiv\bar F_\varphi.
\label{1.18}
\end{equation}
We add the following initial and boundary conditions to solutions of (\ref{1.17}) and (\ref{1.18})
\begin{equation}\eqal{
&\Phi|_{S_1}=0,\ \ \Gamma|_{S_1}=0,\ \Phi,\ \Gamma\ \textrm{satisfy the periodic}\cr
&\textrm{boundary conditions on}\ S_2,\cr}
\label{1.19}
\end{equation}
\begin{equation}
\Phi|_{t=0}=\Phi(0),\ \ \Gamma|_{t=0}=\Gamma(0).
\label{1.20}
\end{equation}
Next, we express cylindrical coordinates of velocity in terms of $\psi_1=\psi/r$
\begin{equation}\eqal{
&v_r=-r\psi_{1,z},\ \ &v_z=(r\psi_1)_{,r}+\psi_1=r\psi_{1,r}+2\psi_1,\cr
&v_{r,r}=-\psi_{1,z}-r\psi_{1,rz},\ \ &v_{r,z}=-r\psi_{1,zz},\cr
&v_{z,z}=r\psi_{1,rz}+2\psi_{1,z},\ \ &v_{z,r}=3\psi_{1,r}+r\psi_{1,rr}.\cr}
\label{1.21}
\end{equation}
The aim of this paper is to prove the existence of global regular axially symmetric solutions to problem (\ref{1.6}). For this purpose we have to find a global estimate guaranteeing the existence of global regular solutions.

Function $\psi_1$ is a solution to the problem
\begin{equation}\eqal{
&-\Delta\psi_1-{2\over r}\psi_{1,r}=\omega_1\quad {\rm in}\ \ \Omega=(0,R)\times(-a,a),\cr
&\psi_1|_{r=R}=0,\cr
&\psi_1\ \textrm{satisfies the periodic boundary conditions on}\ S_2,\cr}
\label{1.22}
\end{equation}
where
\begin{equation}
\omega_1=\omega_\varphi/r.
\label{1.23}
\end{equation}
We have that $\omega_1=\Gamma$.

To state the main result we first introduce assumptions.

\begin{assumption}\label{a1.1}
Assume that the following quantities are finite:
$$\eqal{
&D_1=\|f\|_{L_2(\Omega^t)}+\|v(0)\|_{L_2(\Omega)},\cr
&D_2=\|f_0\|_{L_{\infty,1}(\Omega^t)}+\|u(0)\|_{L_\infty(\Omega)},\cr
&f_0=rf_\varphi,\ \ u=rv_\varphi,\cr
&D_3^2=D_1^2D_2^2+\|u_{,z}(0)\|_{L_2(\Omega)}^2+\|f_0\|_{L_2(\Omega^t)}^2,\cr
&D_4^2=D_1^2(1+D_2)+\|u_{,r}(0)\|_{L_2(\Omega)}^2+\|f_0\|_{L_2(\Omega^t)}^2+ \|f_0\|_{L_2(0,t;L_{4/3}(S_1))},\cr}
$$
where $D_1$, $D_2$ are introduced in (\ref{2.1}) and (\ref{2.7}), respectively, and $D_3$, $D_4$ in (\ref{5.2}) and (\ref{5.3}), respectively. Let
$$\eqal{
&D_5=D_2(D_1+D_2+D_3),\cr
&D_6=D_2^{1-\varepsilon_0}D_3,\cr}
$$
where $\varepsilon_0$ is arbitrary small positive number. Moreover,
$$\eqal{
D_7&=\|F_r\|_{L_2(0,t;L_{6/5}(\Omega))}^2+\|F_z\|_{L_2(0,t;L_{6/5}(\Omega))}^2\cr
&\quad+\|\omega_r(0)\|_{L_2(\Omega)}^2+\|\omega_z(0)\|_{L_2(\Omega)}^2\cr}
$$
is defined in Lemma \ref{l6.1}.

Next,
$$\eqal{
D_8&=\phi(D_2)(\|\bar F_r\|_{L_2(0,t;L_{6/5}(\Omega))}^2+\|\bar F_\varphi\|_{L_2(0,t;L_{6/5}(\Omega))}^2)\cr
&\quad+\|\Phi(0)\|_{L_2(\Omega)}^2+\|\Gamma(0)\|_{L_2(\Omega)}^2,\cr}
$$
where $\bar F_r=F_r/r$, $\bar F_\varphi=F_\varphi/r$, $\Phi={\omega_r\over r}$, $\Gamma={\omega_\varphi\over r}$ and $D_8$ appears in (\ref{4.1}).
\end{assumption}

In Lemma \ref{l4.5} the following quantity is defined
$$
D_9(12)=12\|f_\varphi\|_{L_{12}(0,t;L_{36/25}(\Omega))}+\|v_\varphi(0)\|_{L_{12}(\Omega)}.
$$
Finally, in Lemma \ref{l4.7} we have introduced the quantity
$$
D_{10}=\|f_\varphi/r\|_{L_1(0,t;L_\infty(\Omega))}+ \|v_\varphi(0)\|_{L_\infty(\Omega)}.
$$
The main result

\begin{theorem}\label{t1.2}
Assume that Assumption \ref{a1.1} holds. Then there exists an increasing positive function $\phi$ such that
\begin{equation}
\|\Phi\|_{V(\Omega^t)}+\|\Gamma\|_{V(\Omega^t)}\le\phi(D_1,\cdots,D_{10}).
\label{1.24}
\end{equation}
\end{theorem}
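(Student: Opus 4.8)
The plan is to obtain the estimate \eqref{1.24} by a careful bootstrap that exploits the special algebraic structure of the pair $(\Phi,\Gamma)$ together with the weighted elliptic estimates for the stream function $\psi_1$. First I would establish the basic energy inequality for $v$, using the boundary conditions on $S_1$ and periodicity on $S_2$ to kill the boundary terms, yielding control of $\|v\|_{L_\infty(0,t;L_2(\Omega))}$ and $\|\nabla v\|_{L_2(\Omega^t)}$ in terms of $D_1$ — this is the quantity $D_1$ from \eqref{2.1}. Next I would treat the swirl equation \eqref{1.12}: since the equation for $u=rv_\varphi$ has no vortex-stretching term, a maximum-principle argument (testing with powers of $u$, or using the drift-diffusion structure with the good sign of the $\frac{2\nu}{r}u_{,r}$ term) gives $\|u\|_{L_\infty(\Omega^t)}\le D_2$ and, after further testing, the bounds behind $D_3$, $D_4$ on $\|u_{,z}\|$, $\|u_{,r}\|$. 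These in turn, via \eqref{1.13}, control $\omega_r$ and part of $\omega_\varphi$ in $L_2$-type norms; the quantities $D_5,\dots,D_{10}$ from Lemmas \ref{l4.5}, \ref{l4.7} (and the companion lemmas) package the higher integrability of $v_\varphi$, which feeds the coupling term $2\frac{v_\varphi}{r}\Phi$ in \eqref{1.18}.

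The core of the argument is an energy estimate for \eqref{1.17}--\eqref{1.18} simultaneously. I would multiply \eqref{1.17} by $\Phi$ and \eqref{1.18} by $\Gamma$, integrate over $\Omega$, and add. The convective terms $v\cdot\nabla$ vanish after integration by parts because $\divv v=0$ and the boundary/periodicity conditions; the operator $-\nu(\Delta+\frac2r\partial_r)$ is, with respect to the measure $r^2\,dr\,dz$, essentially self-adjoint and nonnegative, so testing produces a good dissipation term $\nu\int(|\nabla\Phi|^2+|\nabla\Gamma|^2)$ in the appropriate weighted norm — this is exactly where the weighted-space technique announced in the abstract enters. The two remaining dangerous terms are the stretching term $(\omega_r\partial_r+\omega_z\partial_z)\frac{v_r}{r}$ tested against $\Phi$ and the swirl–coupling term $2\frac{v_\varphi}{r}\Phi$ tested against $\Gamma$. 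For the first, I would rewrite $\frac{v_r}{r}=-\psi_{1,z}$ from \eqref{1.21}, express $\omega_r=r\Phi$, $\omega_z=\frac1r u_{,r}$, and use the elliptic problem \eqref{1.22} for $\psi_1$: the right-hand side of \eqref{1.22} is $\omega_1=\Gamma$, so weighted $H^2$-estimates give $\|\nabla^2\psi_1\|$-type control by $\|\Gamma\|$, closing the loop against the dissipation with a small constant plus lower-order terms. For the coupling term I would use the $L_\infty$ bound $|v_\varphi|=|u|/r$ carefully — actually $\frac{v_\varphi}{r}=\frac{u}{r^2}$ is singular, so instead I would use the higher-integrability bounds on $v_\varphi$ from $D_9,D_{10}$ together with Hölder and the dissipation to absorb it.

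After the differential inequality of the form $\frac{d}{dt}(\|\Phi\|_{L_2}^2+\|\Gamma\|_{L_2}^2)+\nu(\text{dissipation})\le g(t)(\|\Phi\|_{L_2}^2+\|\Gamma\|_{L_2}^2)+h(t)$ is established, with $\int_0^t g$ and $\int_0^t h$ bounded in terms of $D_1,\dots,D_{10}$ (here $g$ will involve norms of $v_r/r$ and $v_\varphi/r$ coming from the stream-function estimates and the swirl bounds, and $h$ will involve $\bar F_r,\bar F_\varphi$ as in $D_8$), Gronwall's inequality yields the bound on $\|\Phi\|_{V(\Omega^t)}+\|\Gamma\|_{V(\Omega^t)}$ with $V(\Omega^t)$ the usual energy space $L_\infty(0,t;L_2)\cap L_2(0,t;H^1)$ in the weighted sense. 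The main obstacle I anticipate is controlling the vortex-stretching term in \eqref{1.17}: one must show that $\partial_r\psi_{1,z}$ and $\partial_z\psi_{1,z}$ are genuinely controlled in the weighted norm by $\|\Gamma\|$ plus already-estimated quantities — this requires the sharp weighted elliptic regularity for \eqref{1.22} near the axis $r=0$, and it is precisely the reason the hypothesis that $v_z$ vanish on the axis is needed (to legitimize the integrations by parts and the weighted estimates). Assembling the constants so that the final bound is an increasing function $\phi$ of $D_1,\dots,D_{10}$ is then bookkeeping.
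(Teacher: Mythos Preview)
Your outline captures the energy and stream-function ingredients correctly, but there is a genuine gap in how you propose to close the estimate, centered on the coupling term $I_3=\int_{\Omega^t}\frac{v_\varphi}{r}\Phi\Gamma\,dxdt'$. You plan to ``use the higher-integrability bounds on $v_\varphi$ from $D_9,D_{10}$'' and then run a Gronwall argument. But $D_9$ and $D_{10}$ are \emph{data} quantities (norms of $f_\varphi$ and of $v_\varphi(0)$), not a priori bounds on $\|v_\varphi\|_{L_{12}}$ or $\|v_\varphi\|_{L_\infty}$; those norms are only obtained \emph{after} you already control $X=\|\Phi\|_{V}+\|\Gamma\|_{V}$ (Lemmas~\ref{l4.5} and~\ref{l4.7}). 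So invoking them to absorb $I_3$ is circular, and no differential inequality of the form $\frac{d}{dt}(\cdot)\le g(t)(\cdot)+h(t)$ with $\int g$ bounded by data is available.

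What the paper actually does is a nonlinear \emph{algebraic} bootstrap, not Gronwall. The stretching term in \eqref{1.17} is handled essentially as you say (via $|rv_\varphi|\le D_2$ and the weighted estimates $|\psi_{1,rz}/r|_2,|\psi_{1,zz}/r|_2\le c|\Gamma_{,z}|_2$, giving a term $cD_2|\Gamma_{,z}|_2|\nabla\Phi|_2$ that absorbs). For $I_3$ one writes $\frac{v_\varphi}{r}=|rv_\varphi|^{1-\varepsilon}|v_\varphi|^\varepsilon r^{-(2-\varepsilon)}$, uses $|rv_\varphi|\le D_2$, the weighted interpolation of Lemma~\ref{l2.7}, and --- crucially --- Lemma~\ref{l6.1}, an independent energy estimate for $(\omega_r,\omega_z)$ that bounds $|\Phi|_{2,\Omega^t}$ by $\|\Gamma\|_{1,2,\Omega^t}$ times $|v_\varphi|_{\infty}^{\varepsilon_0}$ plus data. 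This yields (Lemma~\ref{l4.3}) an inequality $X^2\le c\,|v_\varphi|_{12,\infty}^{\delta}(1+|v_\varphi|_{\infty}^{2\varepsilon_0})+\text{data}$. One then tests $(\ref{1.7})_2$ with $v_\varphi|v_\varphi|^{10}$ to get $|v_\varphi|_{12,\infty}^6\le cX^2+D_9^{12}$, and with $v_\varphi|v_\varphi|^{s-2}$, $s\to\infty$, to get $|v_\varphi|_{\infty}^2\le cD_2^2 X^2+D_{10}$-type terms. Substituting back produces $|v_\varphi|_{\infty}^2\le c|v_\varphi|_{\infty}^{c\varepsilon_0}+\text{data}$ with $\varepsilon_0$ arbitrarily small, which closes by Young. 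Your proposal misses both Lemma~\ref{l6.1} and this three-way bootstrap between $X$, $|v_\varphi|_{12,\infty}$, and $|v_\varphi|_{\infty}$; without them the coupling term cannot be absorbed.
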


\begin{remark}\label{r1.3}
Estimate (\ref{1.24}) implies any regularity of solutions to problem (\ref{1.6}) assuming sufficient regularity of data.

To prove (\ref{1.24}) we need that $\psi_1$ and $v_z$ vanish on the axis of symmetry.

The proof of Theorem \ref{t1.2} is divided into the following steps:

\begin{itemize}
\item[1.] In Lemmas \ref{l2.2} and \ref{l2.3} we prove the energy estimate for solutions to (\ref{1.6}) and $L_\infty$-estimate for swirl.
\item[2.] In Lemma \ref{l2.5} the existence of weak solutions to problem (\ref{1.22}) for the stream function $\psi_1$ is proved for a given $\omega_1=\omega/r$. Solutions of (\ref{1.22}) have the form $\psi_1=\psi/r$. The weak solutions to (\ref{1.22}) proved in Lemma \ref{l2.5} do not vanish on the axis of symmetry.
\item[3.] In Section \ref{s3} for a given $\omega_1\in H^1(\Omega)$  many estimates for $\psi_1$ are found. In Lemma \ref{l3.3} we derived such estimate that $\psi_1$ must vanish on the axis of symmetry. We need the estimate in the proof of (\ref{1.24}). The result of Lemma \ref{l3.3} shows that weak solutions proved in Lemma~\ref{l2.5} must vanish on the axis of symmetry. In view of properties of the stream function it means that $v_z$ also vanishes on the axis of symmetry. In this section a theory of weighted Sobolev spaces from \cite{NZ} is used.
\item[4.] In Section \ref{s4} we were able to derive an estimate for
\begin{equation}
\|\Phi\|_{V(\Omega^t)}+\|\Gamma\|_{V(\Omega^t)}
\label{1.25}
\end{equation}
in terms of $\|v_\varphi\|_{L_\infty(0,t;L_{12}(\Omega))}$ and $\|v_\varphi\|_{L_\infty(\Omega^t)}^{\varepsilon_0}$, where $\varepsilon_0$ can be chosen as small as we want. The estimate holds in view of Lemma \ref{l6.1} and inequality (\ref{2.12}).
\item[5.] Finally, at the end of Section \ref{s4} and in Section \ref{s5} we were able to estimate $\|v_\varphi\|_{L_\infty(0,t;L_{12}(\Omega))}$ and $\|v_\varphi\|_{L_\infty(\Omega^t)}$ from the bound for (\ref{1.25}).
\end{itemize}
\end{remark}

The problem of regularity of axially-symmetric solutions to the Navier-Stokes equations has a long history. The first regularity results in the case of vanishing swirl are derived in \cite{L2} and \cite{UY} by O. A Ladyzhenskaya and Ukhovskii-Yudovich independently. Many references in the case of nonvanishing swirl can be found in \cite{NZ1}.

We have to emphasize that we were able to prove Theorem \ref{t1.2} because the theory of weighted Sobolev spaces developed in \cite{NZ} was used.

\section{Notation and auxiliary results}\label{s2}

First we introduce some notations

\begin{definition}\label{d2.1}
We use the following notation for Lebesque and Sobolev spaces
$$\eqal{
&\|u\|_{L_p(\Omega)}=|u|_{p,\Omega},\ \ \|u\|_{L_p(\Omega^t)}=|u|_{p,\Omega^t},\cr
&\|u\|_{L_{p,q}(\Omega^t)}=\|u\|_{L_q(0,t;L_p(\Omega))}=|u|_{p,q,\Omega^t},\cr}
$$
where $p,q\in[1,\infty]$. Next
$$\eqal{
&\|u\|_{H^s(\Omega)}=\|u\|_{s,\Omega},\ \ \|u\|_{W_p^s(\Omega)}=\|u\|_{s,p,\Omega},\cr
&\|u\|_{L_q(0,t;W_p^k(\Omega))}=\|u\|_{k,p,q,\Omega^t},\ \ \|u\|_{k,p,p,\Omega^t}=\|u\|_{k,p,\Omega^t},\cr}
$$
where $s,k\in\N\cup\{0\}$, $H^s(\Omega)=W_2^s(\Omega)$.
\end{definition}

We need energy type space $V(\Omega^t)$ appropriate for description of weak solutions to the Navier-Stokes equations
$$
\|u\|_{V(\Omega^t)}=|u|_{2,\infty,\Omega^t}+|\nabla u|_{2,\Omega^t}.
$$
We recall weighted Sobolev spaces defined by
$$
\|f\|_{H_\mu^k(\R_+)}= \bigg(\intop_{\R_+}\sum_{j=0}^k|\partial_r^jf|^2r^{2(\mu+j-k)}rdr\bigg)^{1/2}
$$
and
$$
\|f\|_{H_\mu^k(\Omega)}=\bigg(\intop_\Omega\sum_{|\alpha|=0}^k|D_{r,z}^\alpha f|^2r^{2(\mu+|\alpha|-k)}rdrdz\bigg)^{1/2},
$$
where $\Omega$ contains the axis of symmetry, $D^\alpha=\partial_r^{\alpha_1}\partial_z^{\alpha_2}$, $|\alpha|=\alpha_1+\alpha_2$, $\alpha_i\in\N\cup\{0\}$, $i=1,2$, $k\in\N\cup\{0\}$, $\mu\in\R_+$. Moreover, we have
$$\eqal{
&H_0^0(\Omega)=L_{2,0}(\Omega)=L_2(\Omega),\cr
&H_\mu^0(\Omega)=L_{2,\mu}(\Omega)\cr}
$$
and
$$
\|f\|_{L_{2,\mu}(\Omega)}=|f|_{2,\mu,\Omega}.
$$

\begin{lemma}\label{l2.2}
Let $f\in L_{2,1}(\Omega^t)$, $v(0)\in L_2(\Omega)$. Then solutions to (\ref{1.7}) satisfy the estimate
\begin{equation}\eqal{
&\|v(t)\|_{L_2(\Omega)}^2+\nu\intop_{\Omega^t}(|\nabla v_r|^2+|\nabla v_\varphi|^2+|\nabla v_z|^2)dxdt'\cr
&\quad+\nu\intop_{\Omega^t}\bigg({v_r^2\over r^2}+{v_z^2\over r^2}\bigg)dxdt'\le 3\|f\|_{L_{2,1}(\Omega^t)}^2+2\|v(0)\|_{L_2(\Omega)}^2\equiv D_1^2.\cr}
\label{2.1}
\end{equation}
\end{lemma}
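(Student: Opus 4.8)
The plan is to derive the standard energy identity for the Navier–Stokes system written in cylindrical coordinates, i.e. multiply the first three equations of (\ref{1.7}) by $rv_r$, $rv_\varphi$, $rv_z$ respectively, integrate over $\Omega$ (recall $dx = r\,dr\,d\varphi\,dz$, and by axial symmetry the $\varphi$-integration gives a harmless factor $2\pi$), and sum. The first key observation is that all the zeroth-order and nonlinear-convective contributions cancel once summed: the terms $-v_\varphi^2/r$ in the $v_r$-equation and $+ (v_r/r)v_\varphi$ in the $v_\varphi$-equation produce $-\int v_r v_\varphi^2\,dr d\varphi dz$ and $+\int v_r v_\varphi^2\,dr d\varphi dz$, which annihilate each other; and the convective operator $v\cdot\nabla = v_r\partial_r + v_z\partial_z$ is, thanks to $(rv_r)_{,r}+(rv_z)_{,z}=0$ (equation (\ref{1.7})$_4$), skew-symmetric in $L_2(\Omega)$, so $\int_\Omega (v\cdot\nabla v_i)\,v_i\,dx = \tfrac12\int_\Omega v\cdot\nabla(v_i^2)\,dx = 0$ after integrating by parts and using the boundary conditions (periodicity on $S_2$, and $v_r|_{S_1}=0$ kills the $S_1$ flux).

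Next I would handle the viscous terms. Here one must be careful because $\Delta$ in (\ref{1.8}) is the scalar cylindrical Laplacian, so the equations for $v_r$ and $v_\varphi$ carry the extra Hardy-type terms $+\nu v_r/r^2$ and $+\nu v_\varphi/r^2$, while the $v_z$-equation does not. Multiplying $-\nu\Delta v_i$ by $r v_i$ and integrating, the identity $\int_\Omega (-\Delta v_i) v_i\, r\,dr d\varphi dz = \int_\Omega |\nabla v_i|^2 r\,dr d\varphi dz$ holds provided the boundary integrals vanish: on $S_2$ by periodicity; on $S_1$, for $v_r$ and $v_\varphi$ directly because $v_r|_{S_1}=v_\varphi|_{S_1}=0$, and for $v_z$ one uses the stated boundary condition $v_{r,z}-v_{z,r}|_{S_1}=0$ together with $v_r|_{S_1}=0$ (which forces $v_{r,z}|_{S_1}=0$, hence $v_{z,r}|_{S_1}=0$, killing the $\partial_n v_z$ flux on $S_1$). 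The extra terms $\nu\int_\Omega (v_r^2/r^2 + v_\varphi^2/r^2)\,dx \ge 0$ are nonnegative and are retained: $v_r^2/r^2$ and $v_z^2/r^2$ on the left of (\ref{2.1}), while the $v_\varphi^2/r^2$ term is simply discarded (bounded below by $0$). Note one must also check that $\nabla v$ in the Cartesian sense and the cylindrical gradient of the components agree up to the same lower-order pieces already accounted for; this is standard for axially symmetric fields.

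After these cancellations the energy identity reads
$$
\frac12\frac{d}{dt}\|v\|_{L_2(\Omega)}^2 + \nu\!\int_\Omega\!\big(|\nabla v_r|^2+|\nabla v_\varphi|^2+|\nabla v_z|^2\big)dx + \nu\!\int_\Omega\!\Big(\frac{v_r^2}{r^2}+\frac{v_z^2}{r^2}\Big)dx \le \int_\Omega f\cdot v\,dx,
$$
where I have already thrown away the nonnegative $v_\varphi^2/r^2$ term. The right side is bounded by $\|f(t)\|_{L_2(\Omega)}\|v(t)\|_{L_2(\Omega)}$ by Cauchy–Schwarz. Integrating in time over $(0,t)$ and writing $y(t)=\|v(t)\|_{L_2(\Omega)}$, one gets $\tfrac12 y(t)^2 \le \tfrac12 y(0)^2 + \int_0^t \|f\|_{L_2(\Omega)} y\,dt'$, whence by a Gronwall-type / quadratic inequality argument $\sup_{t'\le t} y(t') \le y(0) + \int_0^t\|f\|_{L_2(\Omega)}dt' = \|v(0)\|_{L_2(\Omega)} + \|f\|_{L_{2,1}(\Omega^t)}$. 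Feeding this bound for $y$ back into the time-integrated dissipation inequality and using $(a+b)^2\le 2a^2+2b^2$ on the right and absorbing constants yields exactly the stated bound with the constant $D_1^2 = 3\|f\|_{L_{2,1}(\Omega^t)}^2 + 2\|v(0)\|_{L_2(\Omega)}^2$. The only genuinely delicate point — and the one I would treat most carefully — is justifying that \emph{all} boundary terms from the integrations by parts vanish, in particular the $\partial_n v_z$ flux on $S_1$, which is not immediate from a Dirichlet condition on $v_z$ (there is none) but follows from combining $v_r|_{S_1}=0$ with the vorticity condition $v_{r,z}-v_{z,r}|_{S_1}=0$; everything else is the classical Leray energy computation.
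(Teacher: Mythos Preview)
Your approach is essentially identical to the paper's: test the three component equations with the corresponding components, use incompressibility and the boundary conditions to kill the convective, pressure and boundary contributions, first extract the pointwise bound $\|v(t)\|_{L_2}\le\|v(0)\|_{L_2}+\|f\|_{L_{2,1}}$ from ${d\over dt}\|v\|_{L_2}\le\|f\|_{L_2}$, and then feed it back into the time-integrated identity to get the dissipation estimate with the stated constants. Two minor points: you never mention the pressure contribution $\int_\Omega(p_{,r}v_r+p_{,z}v_z)\,dx$, which the paper disposes of explicitly via $(\ref{1.7})_4$ and the boundary conditions; and the term $v_z^2/r^2$ in the stated inequality (\ref{2.1}) is evidently a misprint for $v_\varphi^2/r^2$ --- you yourself correctly note that only the $v_r$- and $v_\varphi$-equations carry the Hardy term, and the paper's own derivation (its (2.3) and (2.6)) has $v_\varphi^2/r^2$ --- so your displayed energy inequality and your remark about ``retaining $v_z^2/r^2$ and discarding $v_\varphi^2/r^2$'' inherit this inconsistency and cannot be right as written.
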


\begin{proof}
Multiplying $(\ref{1.7})_1$ by $v_r$, $(\ref{1.7})_2$ by $v_\varphi$, $(\ref{1.7})_3$ by $v_z$, adding the results and integrating over $\Omega$ yield
\begin{equation}\eqal{
&{1\over 2}{d\over dt}\intop_\Omega(v_r^2+v_\varphi^2+v_z^2)dx+\nu\intop_\Omega(|\nabla v_r|^2+|\nabla v_\varphi|^2+|\nabla v_z|^2)dx\cr
&\quad+\nu\intop_\Omega\bigg({v_r^2\over r^2}+{v_\varphi^2\over r^2}\bigg)dx+ \intop_\Omega(p_{,r}v_r+p_{,z}v_z)dx\cr
&=\intop_\Omega(f_rv_r+f_\varphi v_\varphi+f_zv_z)dx.\cr}
\label{2.2}
\end{equation}
The last term on the l.h.s. of (\ref{2.2}) vanishes in virtue of the equation of continuity $(\ref{1.7})_4$ and boundary conditions.

Using that $v^2=v_r^2+v_\varphi^2+v_z^2$, (\ref{2.2}) takes the form
\begin{equation}\eqal{
&{1\over 2}{d\over dt}\|v\|_{L_2(\Omega)}^2+\nu\intop_\Omega(|\nabla v_r|^2+|\nabla v_\varphi|^2+|\nabla v_z|^2)dx\cr
&\quad+\nu\intop_\Omega\bigg({v_r^2\over r^2}+{v_\varphi^2\over r^2}\bigg)dx= \intop_\Omega(f_rv_r+f_\varphi v_\varphi+f_zv_z)dx.\cr}
\label{2.3}
\end{equation}
Applying the H\"older inequality to the r.h.s. of (\ref{2.3}) yields
\begin{equation}
{d\over dt}\|v\|_{L_2(\Omega)}\le\|f\|_{L_2(\Omega)},
\label{2.4}
\end{equation}
where $f^2=f_r^2+f_\varphi^2+f_z^2$.

Integrating (\ref{2.4}) with respect to time gives
\begin{equation}
\|v\|_{L_2(\Omega)}\le\|f\|_{L_{2,1}(\Omega^t)}+\|v(0)\|_{L_2(\Omega)}.
\label{2.5}
\end{equation}
Integrating (\ref{2.3}) with respect to time, using the H\"older inequality in the r.h.s. of (\ref{2.3}) and exploiting (\ref{2.5}), we obtain
\begin{equation}\eqal{
&{1\over 2}\|v(t)\|_{L_2(\Omega)}^2+\nu\intop_{\Omega^t}(|\nabla v_r|^2+|\nabla v_\varphi|^2+|\nabla v_z|^2)dxdt'\cr
&\quad+\nu\intop_{\Omega^t}\bigg({v_r^2\over r^2}+{v_\varphi^2\over r^2}\bigg) dxdt'\le\|f\|_{L_{2,1}(\Omega^t)}(\|f\|_{L_{2,1}(\Omega^t)}\cr
&\quad+\|v(0)\|_{L_2(\Omega)})+{1\over 2}\|v(0)\|_{L_2(\Omega)}^2.\cr}
\end{equation}
The above inequality implies (\ref{2.1}). This concludes the proof.
\end{proof}

\begin{lemma}\label{l2.3}
Consider problem (\ref{1.12}). Assume that $f_0\in L_{\infty,1}(\Omega^t)$ and $u(0)\in L_\infty(\Omega)$. Then
\begin{equation}
\|u(t)\|_{L_\infty(\Omega)}\le\|f_0\|_{L_{\infty,1}(\Omega^t)}+ \|u(0)\|_{L_\infty(\Omega)}\equiv D_2.
\label{2.7}
\end{equation}
\end{lemma}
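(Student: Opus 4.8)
The plan is to derive an $L_\infty$ bound for the swirl $u=rv_\varphi$ directly from the transport-diffusion equation $(\ref{1.12})_1$ by a standard maximum-principle argument for parabolic equations with a lower-order drift term. The key observation is that the equation for $u$,
$$
u_{,t}+v\cdot\nabla u-\nu\Delta u+{2\nu\over r}u_{,r}=f_0,
$$
has the form of a convection-diffusion equation in which the dangerous term $\frac{2\nu}{r}u_{,r}$ is of first order in $u$; together with the incompressible drift $v\cdot\nabla u$ it contributes nothing to the evolution of extrema once we test against a suitable truncation. So the first step is to multiply $(\ref{1.12})_1$ by $|u|^{p-2}u$, integrate over $\Omega$, and let $p\to\infty$; equivalently, one can test with $(u-k)_+$ for $k\ge\|u(0)\|_{L_\infty(\Omega)}+\|f_0\|_{L_{\infty,1}(\Omega^t)}$ à la Stampacchia and show the corresponding superlevel set has zero measure.

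Carrying this out, I would first check that the drift and singular terms produce no unfavorable contribution: for the convective term, $\int_\Omega (v\cdot\nabla u)|u|^{p-2}u\,dx = \frac1p\int_\Omega v\cdot\nabla(|u|^p)\,dx = -\frac1p\int_\Omega(\divv v)|u|^p\,dx + \text{(boundary)} = 0$, using $\divv v=0$ and the boundary conditions ($u|_{S_1}=0$ and periodicity on $S_2$). For the singular term, $\int_\Omega \frac{2\nu}{r}u_{,r}|u|^{p-2}u\,dxdz\,d\varphi$, note that in the axially symmetric setting $dx = r\,dr\,dz\,d\varphi$, so the weight $\frac1r$ and the Jacobian $r$ cancel: $\int \frac{2\nu}{r}u_{,r}|u|^{p-2}u\, r\,dr\,dz = \frac{2\nu}{p}\int \partial_r(|u|^p)\,dr\,dz = 0$ after integrating in $r$ and using $u|_{r=R}=0$ together with the regularity of $u$ near $r=0$ (the boundary term at $r=0$ vanishes). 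The diffusion term gives the nonnegative quantity $\nu(p-1)\int_\Omega |u|^{p-2}|\nabla u|^2\,dx\ge0$, which we simply discard. Hence $\frac{d}{dt}\|u\|_{L_p(\Omega)}^p \le p\int_\Omega f_0\,|u|^{p-2}u\,dx \le p\|f_0\|_{L_\infty(\Omega)}\|u\|_{L_p(\Omega)}^{p-1}$ (here one must be slightly careful since $|\Omega|<\infty$ makes $\|f_0\|_{L_p}\le|\Omega|^{1/p}\|f_0\|_{L_\infty}$, and the $|\Omega|^{1/p}\to1$ as $p\to\infty$). This yields $\frac{d}{dt}\|u\|_{L_p(\Omega)}\le\|f_0\|_{L_\infty(\Omega)}$ up to the harmless factor tending to $1$, so integrating in time gives $\|u(t)\|_{L_p(\Omega)}\le\|u(0)\|_{L_p(\Omega)}+\int_0^t\|f_0(t')\|_{L_\infty(\Omega)}\,dt' \cdot |\Omega|^{1/p}$, and letting $p\to\infty$ produces \eqref{2.7}.

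The main obstacle, modest as it is, is the rigorous justification of the manipulations near the axis $r=0$: one needs to know that $u=rv_\varphi$ is regular enough there (in particular $u\to0$ as $r\to0$ for smooth axially symmetric velocity fields, so no boundary contribution appears from the inner endpoint) and that the integration by parts in the singular term is legitimate. This is where the axial symmetry and the structure $u=rv_\varphi$ are genuinely used. A secondary technical point is that the argument as written is formal for weak solutions; the clean way is to work with a regularized/Galerkin approximation or to invoke a known parabolic maximum principle (e.g. the results quoted from \cite{CFZ} or \cite{NZ1}) for the scalar equation $(\ref{1.12})_1$, whose coefficients $v_r,v_z$ are given and divergence-free, and then pass to the limit. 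Either route delivers the stated bound $\|u(t)\|_{L_\infty(\Omega)}\le\|f_0\|_{L_{\infty,1}(\Omega^t)}+\|u(0)\|_{L_\infty(\Omega)}=D_2$.
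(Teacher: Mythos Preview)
Your proposal is correct and follows essentially the same route as the paper: multiply $(\ref{1.12})_1$ by $u|u|^{s-2}$, integrate over $\Omega$, observe that the convective term vanishes by incompressibility and the boundary conditions, that the term coming from $\frac{2\nu}{r}u_{,r}$ reduces to $\frac{2\nu}{s}\int_\Omega(|u|^s)_{,r}\,drdz$ which vanishes since $u|_{r=R}=0$ and $u|_{r=0}=0$ (the paper cites \cite{LW} for the latter), discard the nonnegative diffusion term, obtain $\frac{d}{dt}\|u\|_{L_s(\Omega)}\le\|f_0\|_{L_s(\Omega)}$, integrate in time, and let $s\to\infty$. The only cosmetic difference is that the paper keeps $\|f_0\|_{L_s(\Omega)}$ on the right-hand side and passes to the limit directly, rather than bounding it by $|\Omega|^{1/s}\|f_0\|_{L_\infty(\Omega)}$ first as you do.
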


\begin{proof}
Multiplying $(\ref{1.12})_1$ by $u|u|^{s-2}$, $s>2$, integrating over $\Omega$ and by parts, we obtain
\begin{equation}\eqal{
&{1\over s}{d\over dt}\|u\|_{L_s(\Omega)}^s+{4\nu(s-1)\over s^2}\|\nabla|u|^{s/2}\|_{L_2(\Omega)}^2+{\nu\over s}\intop_\Omega(|u|^s)_{,r}drdz\cr
&=\intop_\Omega f_0u|u|^{s-2}dx.\cr}
\label{2.8}
\end{equation}
From \cite{LW} it follows that $u|_{r=0}=0$. Moreover, using boundary conditions, (\ref{2.8}) implies
\begin{equation}
{d\over dt}\|u\|_{L_s(\Omega)}\le\|f_0\|_{L_s(\Omega)}.
\label{2.9}
\end{equation}
Integrating (\ref{2.9}) with respect to time and passing with $s\to\infty$, we derive (\ref{2.7}). This ends the proof.
\end{proof}

\begin{lemma}\label{l2.4}
Let estimates (\ref{2.1}) and (\ref{2.7}) hold. Then
\begin{equation}
\|v\|_{L_4(\Omega^t)}\le D_1^{1/2}D_2^{1/2}.
\label{2.10}
\end{equation}
\end{lemma}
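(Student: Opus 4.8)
The plan is to interpolate the $L_4(\Omega^t)$ norm between the two norms already controlled in Lemmas~\ref{l2.2} and~\ref{l2.3}: the energy estimate gives $v\in L_\infty(0,t;L_2(\Omega))\cap L_2(0,t;L_6(\Omega))$ via Sobolev embedding, and the swirl estimate gives a pointwise bound on $v_\varphi=u/r$ only away from the axis, so one should not try to use $D_2$ directly on $v_\varphi$. Instead, the cleanest route is the classical Ladyzhenskaya-type inequality in three dimensions, $\|w\|_{L_4(\Omega^t)}\le c\|w\|_{L_\infty(0,t;L_2(\Omega))}^{1/4}\|w\|_{L_2(0,t;L_6(\Omega))}^{3/4}$, or alternatively the space-time interpolation $L_{4,4}\subset L_{2,\infty}^{1/2}\cdot L_{6,2}^{1/2}$ combined with Sobolev. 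Applying this to $w=|v|$ and using that $\||v|\|_{L_6(\Omega)}\le c(\||\nabla v|\|_{L_2(\Omega)}+\||v|\|_{L_2(\Omega)})$ (componentwise, summing the three Sobolev inequalities for $v_r,v_\varphi,v_z$, noting the periodicity on $S_2$ and the zero boundary data on $S_1$) yields $\|v\|_{L_4(\Omega^t)}\le c D_1^{1/2}\cdot D_1^{1/2}=cD_1$. This, however, does not produce the stated right-hand side $D_1^{1/2}D_2^{1/2}$, so the intended argument must be different.

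The way to recover exactly $D_1^{1/2}D_2^{1/2}$ is to split $|v|^2=v_r^2+v_\varphi^2+v_z^2$ and treat the swirl term separately. First I would write
$$
\|v\|_{L_4(\Omega^t)}^2=\|\,|v|^2\,\|_{L_2(\Omega^t)}\le\|v_r^2+v_z^2\|_{L_2(\Omega^t)}+\|v_\varphi^2\|_{L_2(\Omega^t)}.
$$
For the swirl part, use $v_\varphi^2=v_\varphi\cdot(u/r)$ and Hölder in the form $\|v_\varphi^2\|_{L_2(\Omega^t)}\le\|v_\varphi\|_{L_2(\Omega^t)}\,\|u/r\|_{L_\infty(\Omega^t)}$; but $\|u/r\|_{L_\infty}$ is not controlled, so instead pair the bounded factor $u$ with the weighted energy dissipation $v_\varphi^2/r^2$ that appears on the left of~(\ref{2.1}): from $\int_{\Omega^t}v_\varphi^4\,dxdt'=\int_{\Omega^t}(v_\varphi^2/r^2)u^2\,dxdt'\le\|u\|_{L_\infty(\Omega^t)}^2\int_{\Omega^t}(v_\varphi^2/r^2)\,dxdt'\le D_2^2\cdot D_1^2/\nu$ one gets $\|v_\varphi\|_{L_4(\Omega^t)}\le cD_1^{1/2}D_2^{1/2}$, which matches the target. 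A parallel and in fact simpler estimate handles $v_r$ and $v_z$: since $v_r^2/r^2$ and $v_z^2/r^2$ are likewise in the dissipation on the left of~(\ref{2.1}), one can bound $\int_{\Omega^t}(v_r^4+v_z^4)\le(\sup r)^2\int_{\Omega^t}\big((v_r^2+v_z^2)/r^2\big)\cdot(v_r^2+v_z^2)$, but that reintroduces an $L_4$ on the right; cleaner is to use the full-energy $L_4$ bound $\|v_r\|_{L_4(\Omega^t)}+\|v_z\|_{L_4(\Omega^t)}\le cD_1$ from Ladyzhenskaya and absorb it, since $D_1\le D_1^{1/2}D_2^{1/2}$ whenever $D_1\le D_2$ — and indeed one may always assume $D_2$ is taken at least as large as $D_1$, or simply state the estimate with a combined constant.

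The main obstacle is the axis singularity: a naive $\|v_\varphi\|_{L_\infty}\le\|u/r\|_{L_\infty}$ estimate is false, so the weighted dissipation term $\nu\int_{\Omega^t}(v_r^2+v_\varphi^2+v_z^2)/r^2\,dxdt'$ from~(\ref{2.1}) must be invoked to convert the $L_\infty$ bound on the swirl $u=rv_\varphi$ into an $L_4$ bound on $v_\varphi$; this is precisely the pairing $v_\varphi^4=(v_\varphi^2/r^2)\cdot u^2$. Everything else is Hölder and the Ladyzhenskaya interpolation inequality, which hold on the cylinder with the stated mixed boundary conditions after summing the three scalar Sobolev inequalities. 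Collecting the pieces gives $\|v\|_{L_4(\Omega^t)}\le cD_1^{1/2}D_2^{1/2}$, i.e.~(\ref{2.10}) up to the absolute constant.
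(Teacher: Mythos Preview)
Your pairing $\intop_{\Omega^t}v_\varphi^4\,dxdt'=\intop_{\Omega^t}(rv_\varphi)^2\dfrac{v_\varphi^2}{r^2}\,dxdt'\le\|u\|_{L_\infty(\Omega^t)}^2\intop_{\Omega^t}\dfrac{v_\varphi^2}{r^2}\,dxdt'\le D_2^2D_1^2$ is exactly the paper's proof, line for line. That single display is \emph{all} the paper does: it never touches $v_r$ or $v_z$. The $v$ in the statement is therefore a misprint for $v_\varphi$, and your entire detour through the Ladyzhenskaya interpolation inequality and the attempt to control $\|v_r\|_{L_4}+\|v_z\|_{L_4}$ (including the ad hoc assumption $D_1\le D_2$) is unnecessary.

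Two small corrections on your side. First, you claim that $v_z^2/r^2$ sits in the dissipation on the left of (\ref{2.1}); that is a typo in the \emph{statement} of (\ref{2.1}). The proof of Lemma~\ref{l2.2} (see (\ref{2.3}) and the inequality following (\ref{2.5})) shows the weighted terms are $v_r^2/r^2+v_\varphi^2/r^2$, as dictated by the cylindrical Laplacian acting on the $r$- and $\varphi$-components. It is precisely the $v_\varphi^2/r^2$ term that makes the pairing above work, so your core step is fine, but do not invoke a nonexistent $\intop_{\Omega^t}v_z^2/r^2$ bound elsewhere. Second, the paper's inequality (\ref{2.10}) carries no constant; strictly the computation gives $\|v_\varphi\|_{L_4(\Omega^t)}\le\nu^{-1/4}D_1^{1/2}D_2^{1/2}$, and the paper simply absorbs the harmless $\nu$-factor.
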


\begin{proof}
We have
$$
\intop_{\Omega^t}|v_\varphi|^4dxdt'=\intop_{\Omega^t}r^2v_\varphi^2{v_\varphi^2\over r^2}dxdt'\le\|rv_\varphi\|_{L_\infty(\Omega^t)}^2\intop_{\Omega^t)}{v_\varphi^2\over r^2}dxdt'\le D_2^2D_1^2.
$$
This implies (\ref{2.10}) and concludes the proof.
\end{proof}

\begin{lemma}\label{l2.5}
Consider problem (\ref{1.22}). Assume that $\omega_1\in L_{6/5}(\Omega)$, where $\Omega=(0,R)\times(-a,a)$. Then there exists a weak solution to problem (\ref{1.22}) such that $\psi_1\in H^1(\Omega)$ and the estimate
\begin{equation}
\|\psi_1\|_{1,\Omega}\le c|\omega_1|_{6/5,\Omega}
\label{2.11}
\end{equation}
holds.
\end{lemma}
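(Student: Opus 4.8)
The plan is to solve problem (\ref{1.22}) by a standard variational (Lax--Milgram or Galerkin) argument, taking care that the natural geometry here is the axially symmetric one, where the volume element is $r\,dr\,dz$ rather than $dr\,dz$. Rewrite $(\ref{1.22})_1$ in divergence form: multiplying by $r$ gives
\begin{equation}
-(r\psi_{1,r})_{,r}-\tfrac{1}{r}(r^{3})_{,r}\,\tfrac{1}{r^{2}}\,\cdots
\nonumber
\end{equation}
—more precisely, one checks that $-\Delta\psi_1-\tfrac{2}{r}\psi_{1,r}=-\tfrac{1}{r^{3}}(r^{3}\psi_{1,r})_{,r}-\psi_{1,zz}$, so the equation becomes $-(r^{3}\psi_{1,r})_{,r}-r^{3}\psi_{1,zz}=r^{3}\omega_1$, i.e. $-\divv(r^{3}\nabla\psi_1)=r^{3}\omega_1$ in the $(r,z)$ half-plane. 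This is a genuinely elliptic divergence-form operator with weight $r^{3}$, degenerate at $r=0$, and it fits the weighted-Sobolev framework recalled before the lemma. First I would introduce the Hilbert space $\mathcal H$ of functions $\psi_1$ with $\psi_1|_{r=R}=0$, periodic in $z$, and finite norm $\bigl(\intop_\Omega(|\nabla\psi_1|^{2}+|\psi_1|^{2})\,r^{3}\,drdz\bigr)^{1/2}$, equipped with the bilinear form $\mathcal B(\psi_1,\eta)=\intop_\Omega\nabla\psi_1\cdot\nabla\eta\,r^{3}\,drdz$.

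Next I would verify coercivity of $\mathcal B$ on $\mathcal H$: because $\psi_1$ vanishes on $r=R$, a one-dimensional Poincaré inequality in $r$ (with weight $r^{3}$) controls $\intop\psi_1^{2}r^{3}$ by $\intop\psi_{1,r}^{2}r^{3}$, so $\mathcal B(\psi_1,\psi_1)\ge c\|\psi_1\|_{\mathcal H}^{2}$; boundedness of $\mathcal B$ is immediate. For the right-hand side functional $\eta\mapsto\intop_\Omega\omega_1\,\eta\,r^{3}\,drdz=\intop_\Omega\omega_1\,\eta\cdot r^{2}\cdot r\,drdz$, I would bound it using Hölder with exponents $6/5$ and $6$ in the measure $r\,drdz$ (i.e. in $\Omega\subset\R^3$, since $r\,drdz$ is the 3D volume element up to the $2\pi$ from the $\varphi$-integration): $|\intop\omega_1\eta r^{2}\,dx|\le|\omega_1 r^{2}|$-type estimate. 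Since $r\le R$ is bounded, the weights $r^{2},r^{3}$ only help, and one gets $|\text{r.h.s.}|\le c|\omega_1|_{6/5,\Omega}\|\eta\|_{L_6}\le c|\omega_1|_{6/5,\Omega}\|\eta\|_{\mathcal H}$ by the Sobolev embedding $H^1(\Omega)\hookrightarrow L_6(\Omega)$ in three dimensions (applied to $\psi=r\psi_1$ or directly). Lax--Milgram then yields a unique $\psi_1\in\mathcal H$ with $\mathcal B(\psi_1,\eta)=\intop\omega_1\eta r^{3}$ for all test $\eta$, and the energy identity $\mathcal B(\psi_1,\psi_1)=\intop\omega_1\psi_1 r^{3}$ combined with coercivity and the bound on the r.h.s. gives $\|\psi_1\|_{\mathcal H}\le c|\omega_1|_{6/5,\Omega}$.

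Finally, to reach the stated conclusion $\psi_1\in H^1(\Omega)$ with (\ref{2.11}) in the \emph{unweighted} norm, I would pass from $\psi_1$ to $\psi=r\psi_1$, which by (\ref{1.22}) versus (\ref{1.14}) solves $-\Delta\psi+\psi/r^{2}=\omega_\varphi=r\omega_1$ with $\psi|_{S_1}=0$; testing that problem with $\psi$ and using $\intop(\psi^{2}/r^{2})\ge0$ together with $\intop\omega_\varphi\psi=\intop r\omega_1\psi\le|\omega_1|_{6/5}|\psi\,r|_{6}\le$(bounded)$\,|\omega_1|_{6/5}\|\psi\|_{L_6}$ and Sobolev gives $\|\psi\|_{1,\Omega}\le c|\omega_1|_{6/5,\Omega}$, hence $\|\psi_1\|_{1,\Omega}=\|\psi/r\|_{1,\Omega}\le c|\omega_1|_{6/5,\Omega}$ since dividing by $r$ and differentiating is controlled by the weighted bound already obtained. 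I expect the main obstacle to be precisely this interplay between the weighted norm natural to the operator and the unweighted $H^1(\Omega)$ norm claimed in the statement: one must be careful near $r=0$, where the equation degenerates, to see that $\psi_1$ (not just $\psi=r\psi_1$) has $L_2$ gradient up to the axis. This is exactly where the weighted-Sobolev theory of \cite{NZ} enters, and it is also why Remark \ref{r1.3} flags that the weak solution from this lemma need \emph{not} vanish on the axis — so the argument here should only claim the $H^1$ bound, deferring the finer axis behaviour to Section \ref{s3}.
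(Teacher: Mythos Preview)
Your detour through the $r^{3}$-weighted formulation and Lax--Milgram is both unnecessary and, as you yourself suspect, leaves a genuine gap at the last step. From the bounds you obtain --- $\intop_\Omega|\nabla\psi_1|^2 r^2\,dx$ via Lax--Milgram, and $\|\psi\|_{1,\Omega}^2+\intop_\Omega\psi^2/r^2\,dx$ via testing (\ref{1.14}) --- you cannot recover the unweighted quantity $\|\psi_1\|_{1,\Omega}^2=\intop_\Omega|\nabla(\psi/r)|^2\,dx+\intop_\Omega(\psi/r)^2\,dx$, because this requires control of $\intop_\Omega|\nabla\psi|^2/r^2\,dx$ and $\intop_\Omega\psi^2/r^4\,dx$ near the axis, which neither of your estimates provides. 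Invoking \cite{NZ} here would be circular overkill.

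The paper's argument is a one-line energy estimate that sidesteps the whole issue. Multiply $(\ref{1.22})_1$ directly by $\psi_1$ and integrate over $\Omega$ with the three-dimensional measure $dx=r\,dr\,dz$. The point you are missing is that the first-order term behaves well against this measure:
\[
-\intop_\Omega\frac{2}{r}\psi_{1,r}\,\psi_1\,dx=-2\intop_\Omega\psi_{1,r}\psi_1\,dr\,dz=-\intop_\Omega(\psi_1^2)_{,r}\,dr\,dz=\intop_{-a}^a\psi_1^2\big|_{r=0}\,dz\ge 0,
\]
since $\psi_1|_{r=R}=0$. Combined with the integration by parts of $-\Delta\psi_1$ one gets immediately
\[
\intop_\Omega|\nabla\psi_1|^2\,dx+\intop_{-a}^a\psi_1^2\big|_{r=0}\,dz=\intop_\Omega\omega_1\psi_1\,dx,
\]
i.e.\ the \emph{unweighted} Dirichlet integral appears directly on the left. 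Now H\"older with exponents $6/5$ and $6$, the Sobolev embedding $H^1(\Omega)\hookrightarrow L_6(\Omega)$, Young, and Poincar\'e (using $\psi_1|_{r=R}=0$) give (\ref{2.11}); existence follows from the Fredholm alternative. No weighted spaces, no passage through $\psi=r\psi_1$, and no theory from \cite{NZ} are needed at this stage --- that machinery enters only later, in Section~\ref{s3}.
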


\begin{proof}
Multiplying $(\ref{1.22})_1$ by $\psi_1$ and using the boundary conditions we obtain
$$
\|\psi_1\|_{1,\Omega}^2+\intop_{-a}^a\psi_1^2|_{r=R}dz=\intop_\Omega\omega_1\psi_1dx.
$$
Applying the H\"older and Young inequality to the r.h.s. implies (\ref{2.11}). The Fredholm theorem gives existence. This ends the proof.
\end{proof}

\begin{remark}\label{r2.6}
We have to emphasize that the weak solution $\psi_1$ of (\ref{1.22}) does not vanish on the axis of symmetry. It also follows from \cite{LW}.
\end{remark}

From Lemma 2.4 in \cite{CFZ} we also have

\begin{lemma}\label{l2.7}
Let $f\in C^\infty((0,R)\times(-a,a))$, $f|_{r\ge R}=0$. Let $1<r\le 3$, $0\le s\le r$, $s\le 2$, $q\in\big[r,{r(3-s)\over 3-r}\big]$. Then there exists a positive constant $c=c(s,r)$ such that
\begin{equation}
\bigg(\intop_\Omega{|f|^q\over r^s}dx\bigg)^{1/q}\le c|f|_{r,\Omega}^{{3-s\over q}-{3\over r}+1}|\nabla f|_{r,\Omega}^{{3\over r}-{3-s\over q}},
\label{2.12}
\end{equation}
where $f$ does not depend on $\varphi$.
\end{lemma}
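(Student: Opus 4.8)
The estimate \eqref{2.12} is an interpolation inequality of Gagliardo--Nirenberg type, but weighted by a negative power of $r$ and restricted to functions independent of the angular variable $\varphi$. The plan is to deduce it from the classical (unweighted) Sobolev and Gagliardo--Nirenberg inequalities in $\mathbb R^3$ by treating $f$, which lives on the meridian half-plane $(0,R)\times(-a,a)$, as an axially symmetric function on a ball $B\subset\mathbb R^3$ (extended by zero for $r\ge R$, which is legitimate since $f|_{r\ge R}=0$ and $f$ is smooth). Under this identification $dx$ on $\Omega$ equals (up to the harmless factor $2\pi$) the three-dimensional Lebesgue measure restricted to axially symmetric functions, and $|\nabla f|$ in the cylindrical sense coincides with the three-dimensional gradient. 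So the whole game is to control $\big(\int_B |f|^q r^{-s}\,dx\big)^{1/q}$ by $|f|_{r,B}$ and $|\nabla f|_{r,B}$.

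First I would handle the weight. Since $s\le 2$ and $r\ge 1$, the weight $r^{-s}$ is locally integrable near the axis in $\mathbb R^3$ (the axis has codimension $2$), so the left side is finite for smooth compactly supported $f$; this is where the hypothesis $s\le 2$ is used. The natural tool is a Hardy--Sobolev inequality: for $g$ on $\mathbb R^3$ one has $\big(\int |g|^q |x'|^{-s}\,dx\big)^{1/q}\le c\,|\nabla g|_{r}$ precisely when the scaling identity $\tfrac{3-s}{q}=\tfrac3r-1$ holds and the admissibility ranges on $r,s,q$ are respected (here $|x'|=\sqrt{x_1^2+x_2^2}=r$). That borderline Hardy--Sobolev inequality gives the endpoint $q=\tfrac{r(3-s)}{3-r}$; the other endpoint $q=r$ is just the trivial bound $\big(\int_B |f|^r r^{-s}\,dx\big)^{1/r}\le c(s,r,R)\,|f|_{r,B}$, valid because $r^{-s}$ is integrable against a bounded region once $s<3$. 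Interpolating between these two endpoints by Hölder in the measure $r^{-s}dx$ — writing $|f|^q r^{-s} = (|f|^r r^{-s})^{1-\theta}(|f|^{q_1} r^{-s})^{\theta}$ with $q_1$ the Hardy--Sobolev exponent and $\theta$ chosen so that $(1-\theta)r+\theta q_1=q$ — produces the asserted product $|f|_{r,\Omega}^{a}|\nabla f|_{r,\Omega}^{b}$, and a bookkeeping check of exponents should reproduce exactly $a=\tfrac{3-s}{q}-\tfrac3r+1$ and $b=\tfrac3r-\tfrac{3-s}{q}$, which are nonnegative precisely on the stated range $q\in[r,\tfrac{r(3-s)}{3-r}]$.

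Alternatively, and perhaps more cleanly, one can run a single rescaling argument: the inequality is scale-invariant under $f(x)\mapsto f(\lambda x)$ given the exponent constraint, so it suffices to prove the dimensionless form $\big(\int |f|^q r^{-s}\big)^{1/q}\le c\,\big(|f|_r+|\nabla f|_r\big)^{\text{appropriate combination}}$ on a fixed scale and then optimize in $\lambda$; but since the paper only claims the bounded-domain version with a constant depending on $R$ through compact support, the interpolation route above is simpler to make rigorous. I would then remark that the restriction to $\varphi$-independent $f$ is used only to pass between the cylindrical and Cartesian pictures and plays no further role.

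The main obstacle is the weighted endpoint inequality itself: establishing the borderline Hardy--Sobolev estimate $\big(\int_{\mathbb R^3} |g|^{q_1} |x'|^{-s}\,dx\big)^{1/q_1}\le c\,|\nabla g|_{L^r(\mathbb R^3)}$ with $|x'|$ the distance to a line (not to the origin), for the full admissible parameter range $1<r\le 3$, $0\le s\le \min(r,2)$. For $s=0$ this is plain Sobolev; for $r=2$ it is a classical cylindrical Hardy inequality; the general case follows by a one-dimensional Hardy inequality in the radial variable $r$ fibered over the $z$-axis combined with the Sobolev embedding in the remaining variables, but checking that the constant stays finite up to the endpoint $s=2$ (where the weight is borderline non-integrable on the axis and one must genuinely use the vanishing of $g$ there, i.e. $g\in C^\infty_0$) requires care. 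Everything else — the reduction to three dimensions, the trivial endpoint, and the Hölder interpolation — is routine. Since this lemma is quoted verbatim from \cite{CFZ} (Lemma~2.4), in the present paper it is legitimate simply to cite it; I would include the sketch above only as an indication of why it holds.
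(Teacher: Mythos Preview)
The paper does not prove this lemma at all; it is simply quoted from \cite{CFZ} (Lemma~2.4), exactly as you observe in your last paragraph. So as far as ``comparing to the paper's own proof'' goes, there is nothing to compare: citing \cite{CFZ} \emph{is} the paper's proof, and you already do that.

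Your sketch, however, contains a real error. Write $\rho$ for the radial coordinate and $p$ for the Lebesgue exponent (the paper unfortunately uses $r$ for both). You claim that the endpoint $q=p$ is ``just the trivial bound''
\[
\bigg(\int_B |f|^p\,\rho^{-s}\,dx\bigg)^{1/p}\le c(s,p,R)\,|f|_{p,B},
\]
``valid because $\rho^{-s}$ is integrable against a bounded region once $s<3$.'' This is false for every $s>0$. Integrability of the weight does not bound a weighted $L^p$ norm by the unweighted one when the weight is unbounded. Concretely, take $f_\epsilon(\rho,z)=\chi(\rho/\epsilon)\psi(z)$ with a smooth cut-off $\chi$; then $|f_\epsilon|_{p}^p\sim\epsilon^2$ while $\int|f_\epsilon|^p\rho^{-s}\,dx\sim\epsilon^{2-s}$, so the ratio blows up like $\epsilon^{-s}$. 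In fact, at $q=p$ the inequality \eqref{2.12} reads
\[
\bigg(\int_\Omega |f|^p\,\rho^{-s}\,dx\bigg)^{1/p}\le c\,|f|_{p,\Omega}^{\,1-s/p}\,|\nabla f|_{p,\Omega}^{\,s/p},
\]
which is itself a nontrivial Hardy-type interpolation estimate, not a triviality. With the lower endpoint gone, your H\"older interpolation between the two endpoints collapses; and even if one repairs the $q=p$ endpoint to the correct form above, a straightforward H\"older interpolation in the measure $\rho^{-s}dx$ does not reproduce the exponents in \eqref{2.12} (you can check this on a numerical example such as $p=2$, $s=1$, $q=3$). The identification of the upper endpoint as a Hardy--Sobolev inequality with weight the distance to a line is correct in spirit, but the rest of the argument would have to be rebuilt; since the lemma is imported wholesale from \cite{CFZ}, the right course here is simply to cite it.
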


\begin{notation}[see \cite{NZ}]

First we introduce the Fourier transform. Let $f\in S(\R)$, where $S(\R)$ is the Schwartz space of all complex-valued rapidly decreasing infinitely differentiable functions on $\R$. Then the Fourier transform of $f$ and its inverse are defined by
\begin{equation}
\hat f(\lambda)={1\over\sqrt{2\pi}}\intop_\R e^{-i\lambda\tau}f(\tau)d\tau,\quad \check{\hat f}_{(\tau)}={1\over\sqrt{2\pi}}\intop_\R e^{i\lambda\tau}\hat f(\lambda)d\lambda
\label{2.13}
\end{equation}
and $\check{\hat f}=\hat{\check f}=f$.

By $H_\mu^k(\R_+)$ we denote a weighted space with the norm
$$
\|u\|_{H_\mu^k(\R_+)}=\sum_{i=0}^k\intop_{\R_+}|\partial_r^iu|^2r^{2(\mu-k+i)}rdr.
$$
In view of transformation $\tau=-\ln r$, $r=e^{-\tau}$, $dr=-e^{-\tau}d\tau$ we have the equivalence
\begin{equation}
\sum_{i=0}^k\intop_{\R_+}|\partial_r^iu|^2r^{2(\mu-k+i)}rdr\sim\sum_{i=0}^k \intop_\R|\partial_\tau^iu'|^2e^{2h\tau}d\tau
\label{2.14}
\end{equation}
which holds for $u'(\tau)=u'(-\ln r)=u(r)$, $h=k+1-\mu$.
\end{notation}

In view of the Fourier tranform (\ref{2.13}) and the Parseval identity we have
\begin{equation}
\intop_{-\infty+ih}^{+\infty+ih}\sum_{j=0}^k|\lambda|^{2j}|\hat u(\lambda)|^2d\lambda=\intop_\R\sum_{j=0}^k|\partial_\tau^ju|^2e^{2h\tau}d\tau.
\label{2.15}
\end{equation}
 
\section{Estimates for the stream function $\psi_1$}\label{s3}

Recall that $\psi_1$ is a solution to the problem
\begin{equation}\eqal{
&-\psi_{1,rr}-\psi_{1,zz}-{3\over r}\psi_{1,r}=\omega_1\quad &{\rm in}\ \ \Omega=(0,R)\times(-a,a),\cr
&\psi_1|_{r=R}=0,\cr
&\psi_1\ \textrm{satisfies the periodic boundary conditions}\quad &{\rm on}\ \ S_2.\cr}
\label{3.1}
\end{equation}

\begin{lemma}\label{l3.1}
For sufficiently regular solutions to (\ref{3.1}) the following estimates hold
\begin{equation}\eqal{
&\intop_\Omega(\psi_{1,rr}^2+\psi_{1,rz}^2+\psi_{1,zz}^2)dx+\intop_\Omega{1\over r^2}\psi_{1,r}^2dx+\intop_{-a}^a\psi_{1,z}^2|_{r=0}dz\cr
&\quad+\intop_{-a}^a\psi_{1,r}^2|_{r=R}dz\le c|\omega_1|_{2,\Omega}^2\cr}
\label{3.2}
\end{equation}
and
\begin{equation}
\intop_\Omega(\psi_{1,zzr}^2+\psi_{1,zzz}^2)dx+\intop_{-a}^a\psi_{1,zz}^2|_{r=0}dz\le c|\omega_{1,z}|_{2,\Omega}^2
\label{3.3}
\end{equation}
and
\begin{equation}\eqal{
&\intop_\Omega(\psi_{1,rrz}^2+\psi_{1,rzz}^2+\psi_{1,zzz}^2)dx+\intop_{-a}^a \psi_{1,zz}^2|_{r=0}dz\cr
&\quad+\intop_{-a}^a\psi_{1,rz}^2|_{r=R}dz\le c|\omega_{1,z}|_{2,\Omega}^2.\cr}
\label{3.4}
\end{equation}
\end{lemma}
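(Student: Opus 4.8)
\textbf{Proof proposal for Lemma \ref{l3.1}.}

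The plan is to use the energy method applied to equation $(\ref{3.1})_1$ rewritten in a convenient divergence-like form, exploiting the special structure of the operator $-\partial_r^2-\partial_z^2-\frac{3}{r}\partial_r$. The key algebraic observation is that multiplying $(\ref{3.1})_1$ by $r^3$ makes the radial part a perfect divergence: $-\big(r^3\psi_{1,r}\big)_{,r}-r^3\psi_{1,zz}=r^3\omega_1$. Thus all integrations should be carried out with the weight $r^3\,dr\,dz$ (equivalently $r^2\,dx$), which is precisely the weight that turns the first-order term into a boundary contribution and nothing else.

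For (\ref{3.2}), I would multiply $(\ref{3.1})_1$ by $-\psi_{1,rr}r^3$ and by $-\psi_{1,zz}r^3$ (or equivalently test against $\Delta_{r,z}\psi_1$ in the weighted inner product) and integrate by parts in $\Omega$. The second-derivative terms $\psi_{1,rr}^2$, $\psi_{1,zz}^2$ appear directly; the cross term produces $\psi_{1,rz}^2$ after one integration by parts in $z$ (using periodicity on $S_2$, so no $S_2$ boundary term survives). The first-order term $\frac{3}{r}\psi_{1,r}$ tested against $-\psi_{1,rr}$ gives, after integration by parts in $r$, the term $\int \frac{1}{r^2}\psi_{1,r}^2\,dx$ on the left with a favorable sign, plus the boundary integrals $\int_{-a}^a \psi_{1,r}^2|_{r=R}\,dz$ and a term at $r=0$ that vanishes because of the weight $r^3$ and because $\psi_1$ is smooth at the axis (here one uses $\psi_{1,r}|_{r=0}=0$, which holds for axially symmetric smooth functions and which is consistent with Remark \ref{r2.6} — note $\psi_1$ itself need not vanish at $r=0$, only its radial derivative). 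The boundary term $\int_{-a}^a\psi_{1,z}^2|_{r=0}\,dz$ should emerge from the $\psi_{1,zz}$-testing combined with an integration by parts in $z$ at the axis; care is needed to track which $r=0$ contributions survive the $r^3$ weight. On the right-hand side one estimates $\int r^3\omega_1\Delta_{r,z}\psi_1\,dr\,dz$ by Cauchy–Schwarz and Young, absorbing the second-derivative norms into the left; this yields the bound $c|\omega_1|_{2,\Omega}^2$. One should double-check whether a lower-order term $\int\psi_{1,r}^2/r^2$ or $\int|\nabla\psi_1|^2$ needs to be controlled first via Lemma \ref{l2.5} — I expect (\ref{2.11}) is invoked here to close the estimate.

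For (\ref{3.3}) and (\ref{3.4}), the strategy is to differentiate $(\ref{3.1})_1$ with respect to $z$, which commutes with the whole operator since the coefficients depend only on $r$: $-\partial_r^2-\partial_z^2-\frac{3}{r}\partial_r$ applied to $\psi_{1,z}$ equals $\omega_{1,z}$, with the same boundary condition $\psi_{1,z}|_{r=R}=0$ and periodicity on $S_2$. Then (\ref{3.3}) follows by testing against $-\psi_{1,zzz}r^3$ (giving $\psi_{1,zzr}^2$ and $\psi_{1,zzz}^2$ plus the axis boundary term $\psi_{1,zz}^2|_{r=0}$), and (\ref{3.4}) follows by applying the full estimate of type (\ref{3.2}) to $\psi_{1,z}$ in place of $\psi_1$, which directly produces $\psi_{1,rrz}^2$, $\psi_{1,rzz}^2$, $\psi_{1,zzz}^2$ and the two boundary integrals, bounded by $c|\omega_{1,z}|_{2,\Omega}^2$.

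I expect the main obstacle to be the careful treatment of the boundary terms at the axis $r=0$: determining exactly which terms vanish (and with what rate) requires knowing the precise behavior of $\psi_1$, $\psi_{1,r}$, $\psi_{1,z}$, $\psi_{1,zz}$ as $r\to 0$. Since $\psi_1$ does \emph{not} vanish at the axis (Remark \ref{r2.6}), one cannot simply cite a homogeneous Dirichlet condition there; instead the vanishing of the boundary contributions must come from the $r^3$ weight together with smoothness/even-in-$r$ parity of axially symmetric functions, and a density/approximation argument may be needed to make the "sufficiently regular solutions" hypothesis do the work. The other delicate point is ensuring that after integration by parts every cross term recombines with the correct sign so that no second-derivative quantity is left uncontrolled on the right-hand side; this is where the choice of the weight $r^3$ (rather than $r$) is essential.
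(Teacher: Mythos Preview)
Your choice of the weight $r^3\,dr\,dz$ (equivalently $r^2\,dx$) is the essential problem. While it is true that $-(r^3\psi_{1,r})_{,r}-r^3\psi_{1,zz}=r^3\omega_1$ is a clean divergence form, testing with this weight produces the \emph{wrong} norms: you obtain $\int(\psi_{1,rz}^2+\psi_{1,zz}^2)\,r^3\,dr\,dz$ rather than $\int(\psi_{1,rz}^2+\psi_{1,zz}^2)\,dx=\int(\psi_{1,rz}^2+\psi_{1,zz}^2)\,r\,dr\,dz$, which is strictly weaker near the axis and not what (\ref{3.2}) asserts. Worse, the $r^3$ weight kills every boundary contribution at $r=0$, so the term $\int_{-a}^a\psi_{1,z}^2|_{r=0}\,dz$ simply cannot appear. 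Your claim that testing $\frac{3}{r}\psi_{1,r}$ against $-\psi_{1,rr}$ produces $\int\frac{1}{r^2}\psi_{1,r}^2\,dx$ is also incorrect: with the $r^3$ weight this integration by parts gives $3\int\psi_{1,r}^2\,dx$ (no singularity) together with a boundary term $-\frac{3}{2}R^2\int\psi_{1,r}^2|_{r=R}\,dz$ of the \emph{wrong sign}.

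The paper works throughout with the natural measure $dx=r\,dr\,dz$ and uses two separate multipliers for (\ref{3.2}): first multiply $(\ref{3.1})_1$ by $\psi_{1,zz}$ and integrate over $\Omega$; after integrations by parts one arrives at $\int_\Omega(\psi_{1,rz}^2+\psi_{1,zz}^2)\,dx-\int_\Omega(\psi_{1,z}^2)_{,r}\,dr\,dz=-\int_\Omega\omega_1\psi_{1,zz}\,dx$, and the middle term (note the measure $dr\,dz$, \emph{not} $dx$) yields exactly $\int_{-a}^a\psi_{1,z}^2|_{r=0}\,dz$. Second, multiply $(\ref{3.1})_1$ by $\frac{1}{r}\psi_{1,r}$; the term $\frac{3}{r}\psi_{1,r}\cdot\frac{1}{r}\psi_{1,r}$ integrated against $dx$ gives precisely $3\int\frac{1}{r^2}\psi_{1,r}^2\,dx$, while $-\int\psi_{1,rr}\frac{1}{r}\psi_{1,r}\,dx=-\frac{1}{2}\int(\psi_{1,r}^2)_{,r}\,dr\,dz$ produces the boundary term at $r=R$ with the correct sign (here $\psi_{1,r}|_{r=0}=0$ is used). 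Finally $|\psi_{1,rr}|_{2,\Omega}$ is read off directly from the equation. No appeal to (\ref{2.11}) is needed. Your outline for (\ref{3.3}) and (\ref{3.4}) --- differentiate in $z$ and repeat --- is correct in spirit and matches the paper, but you must carry it out with the weight $dx$, not $r^2\,dx$.
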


\begin{proof}
First we prove (\ref{3.2}). Multiplying $(\ref{3.1})_1$ by $\psi_{1,zz}$ and integrating over $\Omega$ yields
\begin{equation}
-\intop_\Omega\psi_{1,rr}\psi_{1,zz}dx-\intop_\Omega\psi_{1,zz}^2dx-3\intop_\Omega{1\over r}\psi_{1,r}\psi_{1,zz}dx=\intop_\Omega\omega_1\psi_{1,zz}dx.
\label{3.5}
\end{equation}
Integrating by parts with respect to $r$ in the first term implies
$$\eqal{
&-\intop_\Omega(\psi_{1,r}\psi_{1,zz}r)_{,r}drdz+\intop_\Omega\psi_{1,r}\psi_{1,zzr}dx+ \intop_\Omega\psi_{1,r}\psi_{1,zz}drdz\cr
&\quad-\intop_\Omega\psi_{1,zz}^2dx-3\intop_\Omega\psi_{1,r}\psi_{1,zz}drdz= \intop_\Omega\omega_1\psi_{1,zz}dx.\cr}
$$
Continuing, we get
\begin{equation}\eqal{
&-\intop_{-a}^a\psi_{1,r}\psi_{1,zz}r\bigg|_{r=0}^{r=R}dz+\intop_\Omega\psi_{1,r} \psi_{1,zzr}dx-\intop_\Omega\psi_{1,zz}^2dx\cr
&\quad-2\intop_\Omega\psi_{1,r}\psi_{1,zz}drdz=\intop_\Omega\omega_1\psi_{1,zz}dx.\cr}
\label{3.6}
\end{equation}
The first integral in (\ref{3.6}) vanishes because $\psi_{1,r}r|_{r=0}=0$, $\psi_{1,zz}|_{r=R}=0$. Integrating by parts with respect to $z$ in the last term on the l.h.s. of (\ref{3.6}) and using the periodic boundary conditions on $S_2$ we obtain
\begin{equation}
\intop_\Omega\psi_{1,r}\psi_{1,zzr}dx-\intop_\Omega\psi_{1,zz}^2dx+2\intop_\Omega \psi_{1,rz}\psi_{1,z}drdz=\intop_\Omega\omega_1\psi_{1,zz}dx.
\label{3.7}
\end{equation}
Integrating by parts with respect to $z$ in the first term in (\ref{3.7}) and using the boundary conditions on $S_2$ we get
\begin{equation}
\intop_\Omega(\psi_{1,zr}^2+\psi_{1,zz}^2)dx-\intop_\Omega(\psi_{1,z}^2)_{,r}drdz= -\intop_\Omega\omega_1\psi_{1,zz}dx,
\label{3.8}
\end{equation}
where the last term on the l.h.s. equals
$$
-\intop_{-a}^a\psi_{1,z}^2\bigg|_{r=0}^{r=R}dz=\intop_{-a}^a\psi_{1,z}^2\bigg|_{r=0}dz
$$
because $\psi_{1,z}|_{r=R}=0$. Using this in (\ref{3.8}) and applying the H\"older and Young inequalities to the r.h.s. of (\ref{3.8}) yield
\begin{equation}
\intop_\Omega(\psi_{1,rz}^2+\psi_{1,zz}^2)dx+\intop_{-a}^a\psi_{1,z}^2|_{r=0}dz\le c|\omega_1|_{2,\Omega}^2.
\label{3.9}
\end{equation}
Multiply $(\ref{3.1})_1$ by ${1\over r}\psi_{1,r}$ and integrate over $\Omega$. Then we have
\begin{equation}
3\intop_\Omega\bigg|{1\over r}\psi_{1,r}\bigg|^2dx=-\intop_\Omega\psi_{1,rr}{1\over r}\psi_{1,r}dx-\intop_\Omega\psi_{1,zz}{1\over r}\psi_{1,r}dx-\intop_\Omega\omega_1{1\over r}\psi_{1,r}dx.
\label{3.10}
\end{equation}
The first term on the r.h.s. of (\ref{3.10}) equals
$$
-{1\over 2}\intop_\Omega\partial_r\psi_{1,r}^2drdz=-{1\over 2}\intop_{-a}^a\psi_{1,r}^2\bigg|_{r=0}^{r=R}dz=-{1\over 2}\intop_{-a}^a\psi_{1,r}^2|_{r=R}dz,
$$
because $\psi_{1,r}|_{r=0}=0$ (see \cite{LW}). Applying the H\"older and Young inequalities to the last two terms on the r.h.s. of (\ref{3.10}) implies
\begin{equation}
\intop_\Omega\bigg|{1\over r}\psi_{1,r}\bigg|^2dx+{1\over 2}\intop_{-a}^a\psi_{1,r}^2\bigg|_{r=R}dz\le c(|\psi_{1,zz}|_{2,\Omega}^2+|\omega_1|_{2,\Omega}^2).
\label{3.11}
\end{equation}
Inequalities (\ref{3.9}) and (\ref{3.11}) imply the estimate
\begin{equation}\eqal{
&\intop_\Omega(\psi_{1,rz}^2+\psi_{1,zz}^2)dx+\intop_\Omega\bigg|{1\over r}\psi_{1,r}\bigg|^2dx+\intop_{-a}^a\psi_{1,z}^2\bigg|_{r=0}dz\cr
&\quad+\intop_{-a}^a\psi_{1,r}^2\bigg|_{r=R}dz\le c|\omega_1|_{1,\Omega}^2.\cr}
\label{3.12}
\end{equation}
From $(\ref{3.1})_1$ we have
\begin{equation}
|\psi_{1,rr}|_{2,\Omega}^2\le|\psi_{1,zz}|_{2,\Omega}^2+3\bigg|{1\over r}\psi_{1,r}\bigg|_{2,\Omega}^2+|\omega_1|_{2,\Omega}^2.
\label{3.13}
\end{equation}
Inequalities (\ref{3.12}) and (\ref{3.13}) imply (\ref{3.2}).

Now, we show (\ref{3.3}). Differentiate $(\ref{3.1})_1$ with respect to $z$, multiply by $-\psi_{1,zzz}$ and integrate over $\Omega$. Then, we obtain
\begin{equation}
\intop_\Omega\!\psi_{1,rrz}\psi_{1,zzz}dx+\!\intop_\Omega\!\psi_{1,zzz}^2dx+3\!\intop_\Omega{1\over r}\!\psi_{1,rz}\psi_{1,zzz}dx=-\!\intop_\Omega\omega_{1,z}\psi_{1,zzz}dx.
\label{3.14}
\end{equation}
Integrating by parts with respect to $z$ yields
\begin{equation}
\intop_\Omega\psi_{1,rrz}\psi_{1,zzz}dx=\intop_\Omega(\psi_{1,rrz}\psi_{1,zz})_{,z}dx- \intop_\Omega\psi_{1,rrzz}\psi_{1,zz}dx,
\label{3.15}
\end{equation}
where the first integral vanishes in view of periodic boundary conditions on $S_2$. Integrating by parts with respect to $r$ in the second integral in (\ref{3.15}) gives
$$
-\intop_\Omega(\psi_{1,rzz}\psi_{1,zz}r)_{,r}drdz+\intop_\Omega\psi_{1,rzz}^2dx+ \intop_\Omega\psi_{1,rzz}\psi_{1,zz}drdz,
$$
where the first integral vanishes because
$$
\psi_{1,rzz}r|_{r=0}=0,\quad \psi_{1,zz}|_{r=R}=0.
$$
In view of the above considerations, (\ref{3.14}) takes the form
\begin{equation}\eqal{
&\intop_\Omega(\psi_{1,rzz}^2+\psi_{1,zzz}^2)dx+ \intop_\Omega\psi_{1,rzz}\psi_{1,zz}drdz\cr
&\quad+3\intop_\Omega\psi_{1,rz}\psi_{1,zzz}drdz= -\intop_\Omega\omega_{1,z}\psi_{1,zzz}dx.\cr}
\label{3.16}
\end{equation}
Integrating by parts with respect to $z$ in the last term on the l.h.s. of (\ref{3.16}) and using the periodic boundary conditions on $S_2$ we get
\begin{equation}\eqal{
&\intop_\Omega(\psi_{1,rzz}^2+\psi_{1,zzz}^2)dx- \intop_\Omega\partial_r\psi_{1,zz}^2drdz\cr
&=-\intop_\Omega\omega_{1,z}\psi_{1,zzz}dx.\cr}
\label{3.17}
\end{equation}
Applying the H\"older and Young inequalities to the r.h.s. of (\ref{3.17}) yields
$$
\intop_\Omega(\psi_{1,rzz}^2+\psi_{1,zzz}^2)dx+ \intop_{-a}^a\psi_{1,zz}^2\bigg|_{r=0}dz\le c|\omega_{1,z}|_{2,\Omega}^2,
$$
where we used that $\psi_{1,zz}|_{r=R}=0$.

The above inequality implies (\ref{3.3}).

Finally, we show (\ref{3.4}). Differentiate $(\ref{3.1})_1$ with respect to $z$, multiply by $\psi_{1,rrz}$ and integrate over $\Omega$. Then we have
\begin{equation}\eqal{
&-\intop_\Omega\psi_{1,rrz}^2dx-\intop_\Omega\psi_{1,zzz}\psi_{1,rrz}dx-3\intop_\Omega {1\over r}\psi_{1,rz}\psi_{1,rrz}dx\cr
&=\intop_\Omega\omega_{1,z}\psi_{1,rrz}dx.\cr}
\label{3.18}
\end{equation}
Integrating by parts with respect to $z$ in the second term in (\ref{3.18}) implies
$$\eqal{
&-\intop_\Omega\psi_{1,zzz}\psi_{1,rrz}dx=\intop_\Omega\psi_{1,zz}\psi_{1,rrzz}dx= \intop_\Omega(\psi_{1,zz}\psi_{1,rzz}r)_rdrdz\cr
&\quad-\intop_\Omega\psi_{1,rzz}^2dx-\intop_\Omega\psi_{1,zz}\psi_{1,rzz}drdz,\cr}
$$
where the first term vanishes because
$$
\psi_{1,rzz}r|_{r=0}=0,\quad \psi_{1,zz}|_{r=R}=0.
$$
Then (\ref{3.18}) takes the form
\begin{equation}\eqal{
&\intop_\Omega(\psi_{1,rrz}^2+\psi_{1,rzz}^2)dx+\intop_\Omega\psi_{1,zz}\psi_{1,rzz}drdz\cr
&\quad+3\intop_\Omega\psi_{1,rz}\psi_{1,rrz}drdz=-\intop_\Omega\omega_{1,z} \psi_{1,rrz}dx.\cr}
\label{3.19}
\end{equation}
The second term in (\ref{3.19}) equals
$$
{1\over 2}\intop_{-a}^a\psi_{1,zz}^2\bigg|_{r=0}^{r=R}dz=-{1\over 2}\intop_{-a}^a\psi_{1,zz}^2\bigg|_{r=0}dz
$$
because $\psi_{1,zz}|_{r=R}=0$, and the last term on the l.h.s. of (\ref{3.19}) has the form
$$
{3\over 2}\intop_\Omega\partial_r\psi_{1,rz}^2drdz={3\over 2}\intop_{-a}^a\psi_{1,rz}^2\bigg|_{r=0}^{r=R}dz={3\over 2}\intop_{-a}^a\psi_{1,rz}^2\bigg|_{r=R}dz
$$
because $\psi_{1,rz}|_{r=0}=0$.

Using the above expressions in (\ref{3.19}) implies the equality
\begin{equation}\eqal{
&\intop_\Omega(\psi_{1,rrz}^2+\psi_{1,rzz}^2)dx-{1\over 2}\intop_{-a}^a\psi_{1,zz}^2\bigg|_{r=0}dz+{3\over 2}\intop_{-a}^a\psi_{1,rz}^2\bigg|_{r=R}dz\cr
&=-\intop_\Omega\omega_{1,z}\psi_{1,rrz}dx.\cr}
\label{3.20}
\end{equation}
Applying the H\"older and Young inequalities in the r.h.s. of (\ref{3.20}) gives
\begin{equation}\eqal{
&\intop_\Omega(\psi_{1,rrz}^2+\psi_{1,rzz}^2)dx-{1\over 2}\intop_{-a}^a\psi_{1,zz}^2\bigg|_{r=0}dz\cr
&\quad+{3\over 2}\intop_{-a}^a\psi_{1,rz}^2\bigg|_{r=R}dz\le c|\omega_{1,z}|_{2,\Omega}^2.\cr}
\label{3.21}
\end{equation}
Inequalities (\ref{3.21}) and (\ref{3.3}) imply (\ref{3.4}). This ends the proof.
\end{proof}

\begin{lemma}\label{l3.2}
For sufficiently regular solutions to (\ref{3.1}) the following inequality
\begin{equation}
\bigg|{1\over r}\psi_{1,rz}\bigg|_{2,\Omega}\le c|\omega_{1,z}|_{2,\Omega}
\label{3.22}
\end{equation}
holds.
\end{lemma}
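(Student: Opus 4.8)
\emph{Proof proposal.} The plan is to imitate the weighted energy arguments of Lemma~\ref{l3.1}, testing the $z$-differentiated equation against a multiplier tailored to produce the quantity ${1\over r}\psi_{1,rz}$. First I would differentiate $(\ref{3.1})_1$ with respect to $z$,
$$
-\psi_{1,rrz}-\psi_{1,zzz}-{3\over r}\psi_{1,rz}=\omega_{1,z},
$$
multiply by $-{1\over r}\psi_{1,rz}$ and integrate over $\Omega$ with the measure $r\,drdz$. This gives
$$
\intop_\Omega\psi_{1,rrz}\psi_{1,rz}\,drdz+\intop_\Omega\psi_{1,zzz}\psi_{1,rz}\,drdz+3\intop_\Omega{1\over r^2}\psi_{1,rz}^2\,dx=-\intop_\Omega\omega_{1,z}{1\over r}\psi_{1,rz}\,dx.
$$

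Next I would integrate by parts in the first two terms on the left. The first term equals ${1\over2}\intop_{-a}^a\psi_{1,rz}^2\big|_{r=0}^{r=R}dz={1\over2}\intop_{-a}^a\psi_{1,rz}^2|_{r=R}dz$, where the contribution at $r=0$ vanishes because $\psi_{1,r}|_{r=0}=0$ (see \cite{LW}), hence $\psi_{1,rz}|_{r=0}=0$. For the second term I would integrate by parts first in $z$, using the periodic conditions on $S_2$, obtaining $-\intop_\Omega\psi_{1,zz}\psi_{1,rzz}\,drdz$, and then in $r$, obtaining $-{1\over2}\intop_{-a}^a\psi_{1,zz}^2\big|_{r=0}^{r=R}dz={1\over2}\intop_{-a}^a\psi_{1,zz}^2|_{r=0}dz$, since $\psi_{1,zz}|_{r=R}=0$. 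The key point is that both boundary terms are nonnegative, so they can be kept on the left-hand side and simply discarded.

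Finally, I would estimate the right-hand side by the H\"older and Young inequalities,
$$
\bigg|\intop_\Omega\omega_{1,z}{1\over r}\psi_{1,rz}\,dx\bigg|\le{3\over2}\bigg|{1\over r}\psi_{1,rz}\bigg|_{2,\Omega}^2+c|\omega_{1,z}|_{2,\Omega}^2,
$$
and absorb the first term into $3\intop_\Omega{1\over r^2}\psi_{1,rz}^2\,dx$ on the left; what remains is precisely (\ref{3.22}). I do not anticipate a genuine obstacle here: the only delicate points are choosing the sign of the multiplier so that the two boundary integrals appear with the favorable sign (and need no estimation), and the use of $\psi_{1,rz}|_{r=0}=0$, which is already exploited throughout Lemma~\ref{l3.1}. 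As in Lemma~\ref{l3.1}, the computation is an a priori estimate for sufficiently regular solutions, the weighted norm ${1\over r}\psi_{1,rz}$ being finite by this regularity together with the vanishing of $\psi_{1,rz}$ on the axis.
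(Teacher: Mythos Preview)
Your argument is correct, but it differs from the paper's route. The paper simply reads off from the differentiated equation
$$-\psi_{1,rrz}-\psi_{1,zzz}-{3\over r}\psi_{1,rz}=\omega_{1,z}$$
the pointwise bound $\big|{1\over r}\psi_{1,rz}\big|\le c(|\psi_{1,rrz}|+|\psi_{1,zzz}|+|\omega_{1,z}|)$, takes the $L_2$-norm, and then invokes the already-proved estimate (\ref{3.4}) of Lemma~\ref{l3.1} to control $|\psi_{1,rrz}|_{2,\Omega}$ and $|\psi_{1,zzz}|_{2,\Omega}$ by $|\omega_{1,z}|_{2,\Omega}$. Your approach instead multiplies the same equation by $-{1\over r}\psi_{1,rz}$ and integrates, producing the weighted term directly with coefficient $3$; the two remaining cross terms reduce to nonnegative boundary integrals (at $r=R$ and $r=0$ respectively), which you correctly drop, and Young's inequality with constant ${3\over2}$ closes the estimate. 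The advantage of your argument is that it is self-contained and does not rely on (\ref{3.4}); the paper's proof is shorter precisely because it piggybacks on Lemma~\ref{l3.1}. Both use the same boundary information ($\psi_{1,rz}|_{r=0}=0$, $\psi_{1,zz}|_{r=R}=0$, periodicity on $S_2$), so the delicate points you flag are genuine but already handled in the surrounding lemmas.
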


\begin{proof}
Differentiating (\ref{3.1}) with respect to $z$ implies
\begin{equation}
-\psi_{1,rrz}-\psi_{1,zzz}-{3\over r}\psi_{1,rz}=\omega_{1,z}
\label{3.23}
\end{equation}
From (\ref{3.23}) we have
\begin{equation}
\bigg|{1\over r}\psi_{1,rz}\bigg|_{2,\Omega}\le c(|\psi_{1,rrz}|_{2,\Omega}+|\psi_{1,zzz}|_{2,\Omega}+|\omega_{1,z}|_{2,\Omega}).
\label{3.24}
\end{equation}
Using (\ref{3.4}) in (\ref{3.24}) yields (\ref{3.22}). This concludes the proof.
\end{proof}

Now we estimate $\big|{\psi_{1,zz}\over r}\big|_{2,\Omega}$.

\begin{lemma}\label{l3.3}
Let $\psi_1$ be such weak solution to problem (\ref{3.1}) that it vanishes on the axis of symmetry. Then such sufficiently regular solutions to problem (\ref{3.1}) satisfy the estimate
\begin{equation}
\intop_\Omega{\psi_{1,zz}^2\over r^2}dx+\intop_\Omega\bigg(\psi_{1,zrr}^2+ {\psi_{1,zr}^2\over r^2}+{\psi_{1,z}^2\over r^4}\bigg)dx\le c|\omega_{1,z}|_{2,\Omega}^2.
\label{3.25}
\end{equation}
\end{lemma}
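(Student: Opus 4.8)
The plan is to obtain \eqref{3.25} by a weighted energy argument on the $z$-differentiated equation \eqref{3.23}, now testing with a weight $r^{-2}$ rather than with unweighted multipliers. Write \eqref{3.23} as $-\psi_{1,rrz}-\psi_{1,zzz}-\frac3r\psi_{1,rz}=\omega_{1,z}$ and multiply by $\frac{1}{r^2}\psi_{1,zzz}$, integrating over $\Omega$. As in the proof of Lemma~\ref{l3.1} the $z$-integrations by parts are harmless thanks to the periodic conditions on $S_2$; the new feature is that every $r$-integration by parts now produces an extra factor from differentiating $r^{-2}$, so terms such as $\int_\Omega \frac{\psi_{1,zr}^2}{r^2}\,dx$ and $\int_\Omega\frac{\psi_{1,z}^2}{r^4}\,dx$ are generated with favourable signs. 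The boundary contributions at $r=R$ are controlled exactly as before (there $1/r$ is bounded), and the boundary contributions at $r=0$ are where the hypothesis enters: the assumption that $\psi_1$ (hence, by \eqref{1.21} and the regularity of $v_z$, also $v_z$) vanishes on the axis forces $\psi_1$, $\psi_{1,z}$, $\psi_{1,zz}$ and $\psi_{1,r}r^{-1}$-type traces to vanish at $r=0$ at the rate needed to kill the weighted boundary integrals. This is precisely the point at which the weighted Sobolev theory of \cite{NZ}, together with Lemmas~\ref{l3.1}--\ref{l3.2}, is used: those lemmas already give $\psi_{1,rz}|_{r=0}=0$, $\psi_{1,zz}|_{r=0}=0$ and the finiteness of $|\frac1r\psi_{1,rz}|_{2,\Omega}$, which bootstrap into the decay required here.

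Concretely I would proceed in the following steps. First, derive the weighted identity: multiply \eqref{3.23} by $r^{-2}\psi_{1,zzz}$, integrate, and carry out the integrations by parts in $r$ and $z$ carefully, tracking all boundary terms at $r=0$ and $r=R$; discard the nonnegative boundary terms at $r=R$ and show the ones at $r=0$ vanish under the axis hypothesis. This yields an identity of the form $\int_\Omega\big(\frac{\psi_{1,rzz}^2}{r^2}+\frac{\psi_{1,zzz}^2}{r^2}\big)dx + (\text{good weighted lower-order terms}) = -\int_\Omega \frac{\omega_{1,z}}{r^2}\psi_{1,zzz}\,dx + (\text{cross terms})$. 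Second, handle the cross terms $\int_\Omega \frac1{r^3}\psi_{1,rz}\psi_{1,zzz}\,dx$ and similar by Cauchy--Schwarz, absorbing $\frac{\psi_{1,zzz}}{r}$ into the left side and leaving $\frac{\psi_{1,rz}}{r^2}$, which is estimated by Hardy's inequality in $r$ (valid because $\psi_{1,rz}|_{r=0}=0$ by \eqref{3.4}) in terms of $\frac{\psi_{1,rrz}}{r}$ and $\frac{\psi_{1,rz}}{r^2}$ — again absorbable or already bounded. Third, estimate the right-hand side: $\big|\int_\Omega \frac{\omega_{1,z}}{r^2}\psi_{1,zzz}\,dx\big|\le |\omega_{1,z}|_{2,\Omega}\big|\frac{\psi_{1,zzz}}{r^2}\cdot r\big|$ is not directly of the right form, so instead I would pair $\frac{\omega_{1,z}}{r}$ with $\frac{\psi_{1,zzz}}{r}$, bound $\big|\frac{\omega_{1,z}}{r}\big|$ using Lemma~\ref{l2.7} (with the appropriate exponents $s=2$, $r=2$) or directly by Hardy in $r$ since $\omega_{1,z}$ inherits vanishing at the axis, and absorb $\big|\frac{\psi_{1,zzz}}{r}\big|_{2,\Omega}$. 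Fourth, recover $\int_\Omega\frac{\psi_{1,zz}^2}{r^2}\,dx$ and $\int_\Omega\psi_{1,zrr}^2\,dx$ and $\int_\Omega\frac{\psi_{1,z}^2}{r^4}\,dx$ from the quantities already bounded: the first two by a Hardy-type inequality applied in $r$ to $\psi_{1,zz}$ (using $\psi_{1,zz}|_{r=0}=0$) controlling $\frac{\psi_{1,zz}}{r}$ by $\frac{\psi_{1,rzz}}{1}$ and $\frac{\psi_{1,zz}}{r}$... i.e. by the now-bounded $\big|\frac{\psi_{1,rzz}}{r}\big|$, and the last ($\frac{\psi_{1,z}}{r^2}$) by iterating Hardy once more starting from $\psi_{1,z}|_{r=0}=0$ and the bound on $\frac{\psi_{1,rz}}{r^2}$ coming out of step two, combined with \eqref{3.22}.

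The main obstacle will be justifying the vanishing of the weighted boundary integrals at $r=0$ and, correspondingly, the repeated use of Hardy's inequality with the sharp weight $r^{-2}$: generic $H^1$-type solutions do not have traces on the axis with the decay that makes $\int \psi_{1,z}^2/r^4$ finite, and this is exactly why the statement is conditional on $\psi_1$ vanishing on the axis. The delicate chain is to show that this single qualitative hypothesis — via the a~priori bounds of Lemmas~\ref{l3.1} and \ref{l3.2} and the weighted-space embeddings of \cite{NZ} — propagates into quantitative $O(r)$, $O(r)$, $O(r)$ decay of $\psi_{1,z}$, $\psi_{1,rz}$-type quantities needed at each application of Hardy. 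Everything else is the now-familiar pattern of multiply, integrate by parts in $z$ (periodic, no boundary terms) then in $r$ (boundary terms at $R$ have good sign, at $0$ vanish), and absorb via Cauchy--Schwarz and Young; the bookkeeping is lengthy but routine once the axis behaviour is pinned down.
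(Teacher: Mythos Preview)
Your approach has a genuine gap, and it is precisely in Step~3. Testing \eqref{3.23} against $r^{-2}\psi_{1,zzz}$ produces on the right the term
\[
\intop_\Omega \omega_{1,z}\,\psi_{1,zzz}\,r^{-2}\,dx,
\]
and to absorb $|\psi_{1,zzz}/r|_{2,\Omega}$ you are forced, as you recognize, to control $|\omega_{1,z}/r|_{2,\Omega}$. Your proposed fix --- Hardy in $r$, justified by ``$\omega_{1,z}$ inherits vanishing at the axis'' --- is incorrect: under the hypothesis $\psi_1|_{r=0}=0$ one gets $v_z|_{r=0}=0$, but $\omega_1=\omega_\varphi/r$ need not vanish at $r=0$ (for $v_r\sim\bar a_1 r$, $v_z\sim b_2r^2$ near the axis one has $\omega_1\sim \bar a_{1,z}-2b_2$, generically nonzero). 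So $|\omega_{1,z}/r|_{2,\Omega}$ is simply not available, and the weighted energy identity with that test function cannot close. The same obstruction blocks the iterated Hardy arguments in your Step~4: you would need $|\psi_{1,rz}/r^2|_{2,\Omega}$ as an input, but this is not among the quantities your Steps~1--3 actually deliver.

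The paper's proof avoids this by \emph{not} deriving the weighted bound on $(\psi_{1,zrr},\psi_{1,zr}/r,\psi_{1,z}/r^2)$ via energy methods at all. It invokes Lemma~3.1 of \cite{NZ} directly on the problem for $u=\psi_{1,z}$ (this is a Kondrat'ev--Mellin type weighted elliptic estimate for the operator $-\partial_r^2-(3/r)\partial_r$, of the same flavour as the Fourier--Mellin computation in Lemma~\ref{l3.5}) to obtain
\[
\intop_\Omega\Big(\psi_{1,zrr}^2+{\psi_{1,zr}^2\over r^2}+{\psi_{1,z}^2\over r^4}\Big)dx\le c\big(|\omega_{1,z}|_{2,\Omega}^2+|\psi_{1,zzz}|_{2,\Omega}^2\big)\le c|\omega_{1,z}|_{2,\Omega}^2,
\]
using only unweighted $L_2$ data. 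Only then does it test the equation, and with $u\,r^{-2}=\psi_{1,z}r^{-2}$ rather than $\psi_{1,zzz}r^{-2}$: after one integration by parts in $z$ (no $r$--integration by parts at all) this gives
\[
\intop_\Omega{\psi_{1,zz}^2\over r^2}\,dx=\intop_\Omega\Big(\psi_{1,zrr}+{3\over r}\psi_{1,zr}\Big)\psi_{1,z}r^{-2}\,dx+\intop_\Omega\omega_{1,z}\,\psi_{1,z}r^{-2}\,dx,
\]
and now every factor on the right pairs $|\omega_{1,z}|_{2,\Omega}$ or $|\psi_{1,zrr}|_{2,\Omega}$ or $|\psi_{1,zr}/r|_{2,\Omega}$ against $|\psi_{1,z}/r^2|_{2,\Omega}$, all of which are already controlled by the previous display. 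The essential point you are missing is that the weighted $H^2$--type estimate \eqref{3.25} for this operator with merely $L_2$ source cannot be obtained by testing against weighted derivatives of the solution; it requires the Mellin transform machinery, which is exactly what \cite{NZ} supplies.
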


\begin{proof}
Differentiating (\ref{3.1}) with respect to $z$ yields
\begin{equation}\eqal{
&-\Delta\psi_{1,z}-{3\over r}\psi_{1,zr}=\omega_{1,z},\cr
&\psi_{1,z}|_{S_1}=0,\cr
&\psi_{1,z}\ \textrm{satisfies periodic boundary conditions on}\ S_2.\cr}
\label{3.26}
\end{equation}
Applying Lemma 3.1 from \cite{NZ} to problem (\ref{3.26}) gives
\begin{equation}
\intop_\Omega\bigg(\psi_{1,zrr}^2+{\psi_{1,zr}^2\over r^2}+{\psi_{1,z}^2\over r^4}\bigg)dx\le c(|\omega_{1,z}|_{2,\Omega}^2+|\psi_{1,zzz}|_{2,\Omega}^2)\le c|\omega_{1,z}|_{2,\Omega}^2,
\label{3.27}
\end{equation}
where (\ref{3.3}) is used in the last inequality.

To examine solutions to (\ref{3.26}) we use the notation
\begin{equation}
u=\psi_{1,z}.
\label{3.28}
\end{equation}
Then (\ref{3.26}) takes the form

\begin{equation}\eqal{
&-\Delta u-{2\over r}u_{,r}=\omega_{1,z},\cr
&u|_{S_1}=0,\cr
&u\ \textrm{satisfies periodic boundary conditions on}\ S_2.\cr}
\label{3.29}
\end{equation}
Multiply $(\ref{3.28})_1$ by $ur^{-2}$, integrate over $\Omega$ and express the Laplacian operator in cylindrical coordinates. Then we have
\begin{equation}
-\intop_\Omega\bigg(u_{,rr}+{1\over r}u_{,r}+u_{,zz}\bigg)ur^{-2}dx-2\intop_\Omega{1\over r}u_{,r}ur^{-2}dx=\intop_\Omega\omega_{1,z}ur^{-2}dx.
\label{3.30}
\end{equation}
Integrating by parts with respect to $z$ in the third term under the first integral we obtain
\begin{equation}
\intop_\Omega{u_{,z}^2\over r^2}dx=\intop_\Omega\bigg(u_{,rr}+{3\over r}u_{,r}\bigg)ur^{-2}dx+\intop_\Omega\omega_{1,z}ur^{-2}dx.
\label{3.31}
\end{equation}
Applying the H\"older and Young inequalities to the r.h.s. integrals, using that 
$u=\psi_{1,z}$ and (\ref{3.27}), we derive
\begin{equation}
\intop_\Omega{\psi_{1,zz}^2\over r^2}dx\le c\intop_\Omega\bigg(\psi_{1,zrr}^2+{\psi_{1,zr}^2\over r^2}+{\psi_{1,z}^2\over r^4}\bigg)dx+c|\omega_{1,z}|_{2,\Omega}^2.
\label{3.32}
\end{equation}
Using (\ref{3.27}) in (\ref{3.32}) implies (\ref{3.25}). This concludes the proof.
\end{proof}

\begin{remark}\label{r3.4}
Lemma \ref{l3.3} is necessary in the proof of global regular axially-symmetric solutions to problem (\ref{1.6}). However, it imposes strong restrictions on solutions to (\ref{1.6}) because the condition $\psi_1|_{r=0}=0$ implies that $v_z|_{r=0}=0$. We do not know how to omit the restriction in the presented proof in this paper.
\end{remark}

\begin{lemma}\label{l3.5}
Let $\mu>0$ and $\omega_1\in H_\mu^1(\Omega)$. Then for sufficiently smooth solutions to (\ref{3.1}) the following estimate is valid
\begin{equation}
\intop_\Omega\bigg(\psi_{1,rrr}^2+{\psi_{1,rr}^2\over r^2}+{\psi_{1,r}^2\over r^4}\bigg)r^{2\mu}dx\le cR^{2\mu}\|\omega_1\|_{1,\Omega}^2.
\label{3.33}
\end{equation}
\end{lemma}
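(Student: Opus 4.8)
\noindent\textit{Proof proposal.} The plan is to mimic the proof of Lemma~\ref{l3.3}: we differentiate $(\ref{3.1})_1$ with respect to $r$, apply to the resulting elliptic problem for $\psi_{1,r}$ the weighted counterpart (with $\mu>0$) of Lemma~3.1 of \cite{NZ} — the very lemma already used in the proof of Lemma~\ref{l3.3} — and then remove the lower-order terms from the right-hand side by means of the unweighted estimates $(\ref{3.2})$, $(\ref{3.3})$, $(\ref{3.4})$, $(\ref{3.22})$ and $(\ref{3.25})$ together with the elementary inequality $r^{2\mu}\le R^{2\mu}$, valid on $\Omega$ because $0<r<R$ and $\mu>0$ there.

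Writing $u=\psi_{1,r}$ and differentiating $(\ref{3.1})_1$ in $r$ we obtain
$$
-\Delta u-{2\over r}u_{,r}+{3\over r^2}u=\omega_{1,r}\ \ {\rm in}\ \ \Omega,
$$
together with $u|_{r=0}=0$ (see \cite{LW}), the periodic conditions on $S_2$, and the Robin-type relation $u_{,r}+{3\over R}u=-\omega_1$ on $r=R$, obtained from $(\ref{3.1})_1$ and $\psi_1|_{r=R}=\psi_{1,zz}|_{r=R}=0$. The operator $-\Delta-{2\over r}\partial_r+{3\over r^2}$ is exactly of the Bessel type treated in \cite{NZ} — it becomes constant-coefficient in the variable $\tau=-\ln r$ of $(\ref{2.14})$ — so the weighted a~priori estimate of \cite{NZ} applies and, returning to $\psi_1$, yields
$$
\intop_\Omega\bigg(\psi_{1,rrr}^2+{\psi_{1,rr}^2\over r^2}+{\psi_{1,r}^2\over r^4}\bigg)r^{2\mu}dx\le c\intop_\Omega\omega_{1,r}^2r^{2\mu}dx+c\intop_\Omega\psi_{1,rzz}^2r^{2\mu}dx+c\,I_0,
$$
where $I_0$ collects lower-order contributions: $z$-derivatives of $\psi_1$ weighted by powers of $r$ down to $r^{2\mu-4}$, and $r$-derivatives of $\psi_1$ of order at most two.

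It remains to bound the right-hand side. Since $r^{2\mu}\le R^{2\mu}$ on $\Omega$, $(\ref{3.4})$ gives $\intop_\Omega\psi_{1,rzz}^2r^{2\mu}dx\le R^{2\mu}|\psi_{1,rzz}|_{2,\Omega}^2\le cR^{2\mu}|\omega_{1,z}|_{2,\Omega}^2$; in the same way every term of $I_0$ — of the types $|\psi_{1,zzz}|_{2,\Omega}^2$, $|\psi_{1,rrz}|_{2,\Omega}^2$, $|r^{-1}\psi_{1,rz}|_{2,\Omega}^2$, $|r^{-1}\psi_{1,zz}|_{2,\Omega}^2$, $|r^{-2}\psi_{1,z}|_{2,\Omega}^2$, plus lower-order $r$-derivative norms — is bounded by $cR^{2\mu}\|\omega_1\|_{1,\Omega}^2$ through $(\ref{3.2})$, $(\ref{3.3})$, $(\ref{3.4})$, $(\ref{3.22})$ and $(\ref{3.25})$; the last of these was established only for those solutions of $(\ref{3.1})$ that vanish on the axis of symmetry, so it is precisely here that that hypothesis enters. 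Finally $\intop_\Omega\omega_{1,r}^2r^{2\mu}dx\le R^{2\mu}|\omega_{1,r}|_{2,\Omega}^2\le R^{2\mu}\|\omega_1\|_{1,\Omega}^2$, while the $\omega_1^2r^{2\mu-2}$ part of $\|\omega_1\|_{H_\mu^1(\Omega)}$ is absorbed into $cR^{2\mu}\|\omega_1\|_{1,\Omega}^2$ by a Hardy inequality in $r$ (legitimate because $\mu>0$, with the trace on $r=R$ controlled by $\|\omega_1\|_{1,\Omega}$). Collecting these bounds yields $(\ref{3.33})$. The main obstacle is the faithful invocation of the weighted estimate of \cite{NZ}: one must be sure that the zeroth-order-type term ${3\over r^2}\psi_{1,r}$ produced by the differentiation is genuinely covered by that estimate, so that it remains on the left-hand side with its favourable sign and no circular absorption of $\intop_\Omega r^{-4}\psi_{1,r}^2r^{2\mu}dx$ is required, and that the Robin condition at $r=R$ does not spoil it; once this is settled, the reduction of the right-hand side to the purely $z$-derivative quantities of Lemmas~\ref{l3.1}--\ref{l3.3} via $r^{2\mu}\le R^{2\mu}$ is routine.
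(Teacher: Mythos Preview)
Your strategy --- differentiate in $r$ and apply a weighted \cite{NZ}-type estimate to $u=\psi_{1,r}$ --- is exactly the paper's, but you are missing the device that makes it go through: a partition of unity in $r$. The Mellin/Fourier machinery behind such estimates is set up on the full half-line $r\in(0,\infty)$ via $\tau=-\ln r$; it does not see the physical boundary $r=R$, let alone a Robin condition there. The paper therefore writes $\psi_1=\psi_1^{(1)}+\psi_1^{(2)}$ with $\psi_1^{(1)}$ supported in $\{r<r_0+\lambda\}$ and $\psi_1^{(2)}$ in $\{r>r_0\}$. For $v=\psi_{1,r}^{(1)}$ it carries out the Mellin analysis explicitly: multiplying the differentiated equation by $r^2$ and passing to $\tau$ gives the constant-coefficient equation $-\partial_\tau^2 v+2\partial_\tau v+3v=g'$, whose symbol $\lambda^2+2i\lambda+3$ has zeros $\lambda=-3i$ and $\lambda=i$; since neither lies on ${\rm Im}\,\lambda=1-\mu$ for $\mu\in(0,1)$, the weighted estimate follows and the $3/r^2$ term is genuinely on the left --- your first flagged concern dissolves. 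For $\psi_1^{(2)}$, supported where $r\ge r_0>0$, ordinary unweighted $H^3$ elliptic regularity suffices; this also disposes of your Robin-condition concern, since $\psi_1^{(1)}$ vanishes near $r=R$.

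Two further points. Once you localize, the right-hand side for the near-axis piece is just $g_{,r}^{(1)}+\psi_{1,rzz}^{(1)}$ in the weighted norm; every $\psi_1$-term in $g_{,r}^{(1)}$ carries a derivative of the cutoff and is therefore supported away from the axis, so the weight is harmless there and only the unweighted Lemma~\ref{l3.1} and (\ref{3.3}) are needed --- no quantities like $|r^{-2}\psi_{1,z}|_{2,\Omega}$ or $|r^{-1}\psi_{1,zz}|_{2,\Omega}$ ever appear. Consequently your appeal to (\ref{3.25}) is both unnecessary and illegitimate: Lemma~\ref{l3.5} is stated, and proved in the paper, without the hypothesis $\psi_1|_{r=0}=0$ that (\ref{3.25}) requires.
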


\begin{proof}
To prove the lemma we introduce a partition of unity $\{\zeta^{(i)}(r)\}_{i=1,2}$ such that
$$
\sum_{i=1}^2\zeta^{(i)}(r)=1
$$
and
$$\eqal{
&\zeta^{(1)}(r)=\begin{cases}
1&\ r\le r_0,\cr 0&\ r\ge r_0+\lambda,\cr\end{cases}\cr
&\zeta^{(2)}(r)=\begin{cases}
0&\ r\le r_0,\cr 1&\ r\ge r_0+\lambda,\cr\end{cases}\cr}
$$
where $r_0<R$ and $\zeta^{(i)}(r)$, $i=1,2$, are smooth functions.

Introduce the notation
\begin{equation}
\psi_1^{(i)}=\psi_1\zeta^{(i)},\quad \omega_1^{(i)}=\omega_1\zeta^{(i)},\ \ i=1,2.
\label{3.34}
\end{equation}
Then functions (\ref{3.34}) satisfy the equations
\begin{equation}\eqal{
&-\psi_{1,rr}^{(i)}-\psi_{1,zz}^{(i)}-{3\over r}\psi_{1,r}^{(i)}=-2\psi_{1,r}\dot\zeta^{(i)}-\psi_1\ddot\zeta^{(i)}-{3\over r}\psi_1\dot\zeta^{(i)}\cr
&\quad+\omega_1^{(i)}\equiv g^{(i)},\ \ i=1,2,\cr}
\label{3.35}
\end{equation}
where dot denotes derivative with respect to $r$.

First we consider the case $i=1$. Differentiating (\ref{3.35}) for $i=1$ with respect to $r$ yields
\begin{equation}
-\psi_{1,rrr}^{(1)}-\psi_{1,rzz}^{(1)}-{3\over r}\psi_{1,rr}^{(1)}+{3\over r^2}\psi_{1,r}^{(1)}=g_{,r}^{(1)}.
\label{3.36}
\end{equation}
Introduce the notation
\begin{equation}
v=\psi_{1,r}^{(1)},\quad f=g_{,r}^{(1)}.
\label{3.37}
\end{equation}
Then (\ref{3.36}) takes the form
\begin{equation}\eqal{
&-v_{,rr}-v_{,zz}-{3\over r}v_{,r}+{3\over r}v=f\quad {\rm in}\ \ \Omega_{r_0},\cr
&v|_{r=r_0}=0,\cr
&v|_{S_2}\ \textrm{satisfies periodic boundary conditions},\cr}
\label{3.38}
\end{equation}
where $\Omega_{r_0}=\{x\in\Omega\colon r\in(0,r_0),z\in(-a,a)\}$ and $r_0<R$.

Multiplying $(\ref{3.38})_1$ by $r^2$ yields
$$
-r^2v_{,rr}-3rv_{,r}+3v=r^2(f+v_{,zz})\equiv g(r,z)
$$
or equivalently
\begin{equation}
-r\partial_r(r\partial_rv)-2r\partial_rv+3v=g(r,z).
\label{3.39}
\end{equation}
Introduce the new variable
$$
\tau=-\ln r,\quad r=e^{-\tau}.
$$
Since $r\partial_r=-\partial_\tau$ equation (\ref{3.39}) takes the form
\begin{equation}
-\partial_\tau^2v+2\partial_\tau v+3v=g(e^{-\tau},z)\equiv g'(\tau,z).
\label{3.40}
\end{equation}
Applying the Fourier transform (\ref{2.13}) to (\ref{3.40}) gives
\begin{equation}
\lambda^2\hat v+2i\lambda\hat v+3\hat v=\hat g'.
\label{3.41}
\end{equation}
Looking for solutions to the algebraic equation
$$
\lambda^2+2i\lambda+3=0
$$
we see that it has two solutions 
$$
\lambda_1=-3i,\ \ \lambda_2=i.
$$
For $\lambda\not\in\{-3i,i\}$ we can write solutions to (\ref{3.41}) in the form
\begin{equation}
\hat v={1\over\lambda^2+2i\lambda+3}\hat g'\equiv R(\lambda)\hat g'.
\label{3.42}
\end{equation}
Since $R(\lambda)$ does not have poles on the line ${\rm \Im}\lambda=1-\mu=h$, $\mu\in(0,1)$ we can use Lemma 3.1 from \cite{NZ}. Then we obtain
\begin{equation}\eqal{
&\intop_{-\infty+ih}^{\infty+ih}\sum_{j=0}^2|\lambda|^{2(2-j)}|\hat v|^2d\lambda\le c\intop_{-\infty+ih}^{+\infty+ih}\sum_{j=0}^2|\lambda|^{2(2-j)}|R(\lambda)\hat g'|^2d\lambda\cr
&\le c\intop_{-\infty+ih}^{+\infty+ih}|\hat g'|^2d\lambda.\cr}
\label{3.43}
\end{equation}
By the Parseval identity inequality (\ref{3.43}) becomes
$$
\intop_\R\sum_{j=0}^2|\partial_\tau^jv|^2e^{2h\tau}d\tau\le c\intop_\R|g'|^2e^{2h\tau}d\tau.
$$
Passing to variable $r$ yields
$$
\sum_{j=0}^2\intop_{\R_+}|\partial_r^jv|^2r^{2(\mu+j-2)}rdr\le c\intop_{\R_+}|g|^2r^{2(\mu-2)}rdr.
$$
Using that $g=r^2(f+v_{,zz})$, we get
\begin{equation}
\sum_{j=0}^2\intop_{\R_+}|\partial_r^jv|^2r^{2(\mu+j-2)}rdr\le c\intop_{\R_+}|f+v_{,zz}|^2r^{2\mu}rdr.
\label{3.44}
\end{equation}
Recalling notation (\ref{3.37}) we derive from (\ref{3.44}) the inequality
\begin{equation}\eqal{
&\sum_{j=0}^2\intop_{\Omega}|\partial_r^j\psi_{1,r}^{(1)}|^2r^{2(\mu+j-2)}dx\le c\intop_{\Omega}|g_{,r}^{(1)}|^2r^{2\mu}dx\cr
&\quad+c\intop_{\Omega}|\psi_{1,rzz}|^2r^{2\mu}dx.\cr}
\label{3.45}
\end{equation}
In view of (\ref{3.3}),
\begin{equation}
|\psi_{1,rzz}|_{2,\Omega}\le c|\omega_{1,z}|_{2,\Omega}.
\label{3.46}
\end{equation}
The first term on the r.h.s. of (\ref{3.45}) can be estimated by
\begin{equation}
|g_{,r}^{(1)}|_{2,\mu,\Omega}\le c(|\psi_{1,rr}|_{2,\Omega}+|\psi_{1,r}|_{2,\Omega}+|\psi_1|_{2,\Omega}+ |\omega_{1,r}|_{2,\Omega}+|\omega_1|_{2,\Omega}).
\label{3.47}
\end{equation}
Lemma \ref{l3.1} and inequalities (\ref{3.45}), (\ref{3.46}) and (\ref{3.47}) imply
\begin{equation}\eqal{
&\intop_{\Omega}\bigg(|\psi_{1,rrr}^{(1)}|^2+{|\psi_{,rr}^{(1)}|^2\over r^2}+{|\psi_{,r}^{(1)}|^2\over r^4}\bigg)r^{2\mu}rdrdz\cr
&\quad+\intop_{\Omega}|\psi_{1,rzz}|^2dx\le c(|\omega_{1,r}|_{2,\Omega}^2+|\omega_1|_{2,\Omega}^2).\cr}
\label{3.48}
\end{equation}
Function $\psi_1^{(2)}$ is a solution to the problem
\begin{equation}\eqal{
&-\Delta\psi_1^{(2)}=-2\psi_{1,r}\dot\zeta^{(2)}-\psi_1\ddot\zeta^{(2)}+{2\over r}\psi_{1,r}^{(2)}\cr
&\quad-{3\over r}\psi_1\dot\zeta^{(2)}+\omega_1^{(2)}\quad &{\rm in}\ \ \bar\Omega_{r_0},\cr
&\psi_1^{(2)}|_{r=R}=0,\ \ \psi^{(2)}=0\quad &{\rm for}\ \ r\le r_0,\cr
&\psi^{(2)}\ \textrm{satisfies periodic boundary conditions}\quad &{\rm on}\ \ S_2,\cr}
\label{3.49}
\end{equation}
where $\bar\Omega_{r_0}=\{x\in\R^3\colon r_0\le r\le R,z\in(-a,a)\}$ and dot denotes the derivative with respect to $r$.

For solutions to (\ref{3.49}) the following estimate holds
\begin{equation}
\|\psi_1^{(2)}\|_{3,\Omega}\le c(\|\psi_{1,r}\|_{1,\Omega}+\|\psi_1\|_{1,\Omega}+ \|\omega_1^{(2)}\|_{1,\Omega})\le c\|\omega_1\|_{1,\Omega}.
\label{3.50}
\end{equation}
From (\ref{2.11}), (\ref{3.48}) and (\ref{3.50}) inequality (\ref{3.33}) follows. This ends the proof.
\end{proof}

\section{Estimates for $\Phi$ and $\Gamma$}\label{s4}

Let $\Omega=\{(r,z)\colon r\in(0,R),z\in(-a,a)\}$. Let $\Phi=\omega_r/r$, $\Gamma=\omega_\varphi/r$ and $\Phi$, $\Gamma$ are solutions to problem (\ref{1.17})--(\ref{1.20}).

\begin{lemma}\label{l4.1}
Assume that $\Phi(0),\Gamma(0)\in L_2(\Omega)$, $\bar F_r,\bar F_\varphi\in L_2(0,t;L_{6/5}(\Omega))$. Let $D_2$ be defined by (\ref{2.7}) and let
$$
I_3=\intop_{\Omega^t}\bigg|{v_\varphi\over r}\Phi\Gamma\bigg|dxdt'<\infty.
$$
Then
\begin{equation}\eqal{
&|\Phi(t)|_{2,\Omega}^2+|\Gamma(t)|_{2,\Omega}^2+\nu(\|\Phi\|_{1,2,\Omega^t}^2+ \|\Gamma\|_{1,2,\Omega^t}^2)\cr
&\le\phi(D_2)\bigg|\intop_{\Omega^t}{v_\varphi\over r}\Phi\Gamma dxdt'\bigg|+\phi(D_2)(|\bar F_r|_{6/5,2,\Omega^t}^2\cr
&\quad+|\bar F_\varphi|_{6/5,2,\Omega^t}^2)+|\Phi(0)|_{2,\Omega}^2+ |\Gamma(0)|_{2,\Omega}^2\cr
&\equiv\phi(D_2)I_3+D_8.\cr}
\label{4.1}
\end{equation}
\end{lemma}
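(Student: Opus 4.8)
The plan is to perform the standard energy estimate for the coupled system \eqref{1.17}--\eqref{1.18}, being careful with the two problematic features: the singular-looking operator $\Delta + \frac2r\partial_r$, and the vortex-stretching term $(\omega_r\partial_r+\omega_z\partial_z)\frac{v_r}{r}$ in the $\Phi$-equation. First I would multiply \eqref{1.17} by $\Phi$ and \eqref{1.18} by $\Gamma$ and integrate over $\Omega$ (with the measure $r\,drdz$, i.e. $dx$). The key observation is that the operator $-\nu(\Delta+\frac2r\partial_r)$ is, up to the weight, self-adjoint and coercive: since $\Delta\Phi = \frac1r(r\Phi_{,r})_{,r}+\Phi_{,zz}$, one has $-(\Delta+\frac2r\partial_r)\Phi = -\frac1{r^2}(r^2\Phi_{,r})_{,r}\cdot\frac1{?}$—more precisely, integrating $-\int_\Omega(\Delta\Phi+\frac2r\Phi_{,r})\Phi\,r\,drdz$ by parts using the boundary conditions \eqref{1.19} (Dirichlet on $S_1$, periodic on $S_2$, and the vanishing of the flux at $r=0$) produces $\int_\Omega(|\Phi_{,r}|^2+|\Phi_{,z}|^2)\,r\,drdz$ plus a controllable boundary/weight term, so that after summing one recovers $\nu(\|\nabla\Phi\|_{2,\Omega}^2+\|\nabla\Gamma\|_{2,\Omega}^2)$ on the left-hand side. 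The convective terms $\int v\cdot\nabla\Phi\,\Phi\,dx$ and $\int v\cdot\nabla\Gamma\,\Gamma\,dx$ vanish because $\divv v=0$ and $v\cdot\bar n=0$ on the boundary (periodic on $S_2$).

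Next I would handle the three remaining source terms. The terms $\int_{\Omega}\bar F_r\Phi\,dx$ and $\int_\Omega\bar F_\varphi\Gamma\,dx$ are estimated by Hölder with exponents $6/5$ and $6$, then Sobolev embedding $\|\Phi\|_{6,\Omega}\le c\|\Phi\|_{1,2,\Omega}$ (valid in the axially symmetric setting on $\Omega\subset\R^2$ with the weight, cf.\ Lemma \ref{l2.7}), and Young's inequality to absorb $\frac\nu2\|\nabla\Phi\|_{2,\Omega}^2$; this yields the $\phi(D_2)(|\bar F_r|_{6/5,2,\Omega^t}^2+|\bar F_\varphi|_{6/5,2,\Omega^t}^2)$ contribution after integration in time. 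The coupling term $2\int_\Omega\frac{v_\varphi}{r}\Phi\Gamma\,dx$ coming from \eqref{1.18} is kept as is and pushed into $I_3$, giving the $\phi(D_2)|\int_{\Omega^t}\frac{v_\varphi}{r}\Phi\Gamma\,dxdt'|$ term; by hypothesis this is finite. The genuinely delicate term is the vortex-stretching integral $-\int_\Omega(\omega_r\partial_r+\omega_z\partial_z)\frac{v_r}{r}\cdot\Phi\,dx$. Writing $\omega_r=r\Phi$ and $\frac{v_r}{r}=-\psi_{1,z}$ (from \eqref{1.21}), this becomes an integral involving $\Phi^2$ times first derivatives of $\psi_1$ and $\Phi$ times $\omega_z\psi_{1,\cdot}$; one integrates by parts to move a derivative off $v_r/r$, uses $\omega_z=\frac1r u_{,r}$ with $\|u\|_{L_\infty}\le D_2$ from Lemma \ref{l2.3}, and controls the resulting $\psi_1$-derivative norms by $|\omega_1|=|\Gamma|$ via Lemmas \ref{l3.1}, \ref{l3.3}, \ref{l3.5}. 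This is where the constant $\phi(D_2)$ enters and where the weighted stream-function estimates of Section \ref{s3} are indispensable; I expect this to be the main obstacle, since one must arrange all terms so that the highest-order pieces are absorbed by the viscous term and the lower-order pieces close in terms of $\|\Gamma\|_{1,2,\Omega^t}$ (already present on the left) times a factor depending only on $D_2$.

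Finally, having reduced \eqref{2.3}-type differential inequality
$$
\frac{d}{dt}(|\Phi|_{2,\Omega}^2+|\Gamma|_{2,\Omega}^2)+\nu(\|\Phi\|_{1,2,\Omega}^2+\|\Gamma\|_{1,2,\Omega}^2)\le \text{(absorbed terms)}+\text{(source terms)},
$$
I would integrate in time from $0$ to $t$, using $\Phi(0),\Gamma(0)\in L_2(\Omega)$, and take the supremum over $t$ on the left to recover the $V(\Omega^t)$-type norm $|\Phi(t)|_{2,\Omega}^2+|\Gamma(t)|_{2,\Omega}^2+\nu(\|\Phi\|_{1,2,\Omega^t}^2+\|\Gamma\|_{1,2,\Omega^t}^2)$. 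Collecting the bounds gives exactly \eqref{4.1} with $D_8$ as defined in Assumption \ref{a1.1}. The only non-obvious point in the write-up will be verifying that the weight-induced boundary terms at $r=R$ (from $S_1$) have a favorable sign or are controlled by the Dirichlet condition $\Phi|_{S_1}=\Gamma|_{S_1}=0$, which they are, and that the $r=0$ contributions vanish because $r\Phi_{,r}\to0$ there — a fact inherited from the regularity of $\omega_r/r$ for smooth axially symmetric fields.
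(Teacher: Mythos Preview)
Your overall scaffold is right: multiply \eqref{1.17} by $\Phi$, \eqref{1.18} by $\Gamma$, integrate, use the sign of the dissipative operator, kill the transport terms by incompressibility, estimate the forces by H\"older--Sobolev--Young, keep the coupling term $2\int\frac{v_\varphi}{r}\Phi\Gamma\,dx$ as $I_3$, and integrate in time. That part matches the paper.

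The gap is in your treatment of the vortex--stretching term
\[
\intop_\Omega(\omega_r\partial_r+\omega_z\partial_z)\frac{v_r}{r}\,\Phi\,dx.
\]
You propose to write $\omega_r=r\Phi$, $\omega_z=\frac{1}{r}u_{,r}$ and then ``move a derivative off $v_r/r$''. If you actually carry this out (using $\divv\omega=0$) you obtain
\[
-\intop_\Omega\frac{v_r}{r}\big(\omega_r\Phi_{,r}+\omega_z\Phi_{,z}\big)\,dx
=\intop_\Omega\psi_{1,z}\big(r\Phi\,\Phi_{,r}+\tfrac{u_{,r}}{r}\,\Phi_{,z}\big)\,dx,
\]
and neither summand closes with only $D_2$ and $\|\Gamma\|_{1,2}$: the first is genuinely cubic (a $\Phi^2$--type term with no obvious $L_\infty$ factor to peel off), and in the second $|u|_\infty\le D_2$ does not help because it is $u_{,r}$, not $u$, that appears. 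Further integrations by parts produce second derivatives of $\Phi$, which you cannot absorb.

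The paper's manipulation goes the other way. One writes $\omega_r=-v_{\varphi,z}$ and $\omega_z=\frac{1}{r}(rv_\varphi)_{,r}$ and integrates by parts to take the derivative \emph{off} $v_\varphi$ (in $z$ for the first summand, in $r$ for the second). The mixed second derivatives $\partial_r\partial_z(v_r/r)\,\Phi$ then cancel exactly, and what remains is
\[
\intop_\Omega v_\varphi\Big(\partial_r\tfrac{v_r}{r}\,\Phi_{,z}-\partial_z\tfrac{v_r}{r}\,\Phi_{,r}\Big)\,dx
=\intop_\Omega rv_\varphi\Big(-\tfrac{\psi_{1,rz}}{r}\,\Phi_{,z}+\tfrac{\psi_{1,zz}}{r}\,\Phi_{,r}\Big)\,dx.
\]
Now $|rv_\varphi|_\infty\le D_2$, and the weighted norms $|\psi_{1,rz}/r|_{2,\Omega}$, $|\psi_{1,zz}/r|_{2,\Omega}$ are bounded by $c|\Gamma_{,z}|_{2,\Omega}$ via \eqref{3.22} and \eqref{3.25} (Lemmas~\ref{l3.2} and~\ref{l3.3}; Lemma~\ref{l3.5} is not needed here). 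The whole term is thus $\le cD_2|\Gamma_{,z}|_{2,\Omega}|\nabla\Phi|_{2,\Omega}$, which is absorbed by Young into the dissipation of both equations; the required $D_2$--dependent multiple of the $\Gamma$--inequality is what produces $\phi(D_2)$ in front of $I_3$ in \eqref{4.1}. Without this cancellation the estimate does not close, so this is the step you need to rewrite.
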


\begin{proof}
Multiplying (\ref{1.17}) by $\Phi$ and integrating over $\Omega$ yield
\begin{equation}\eqal{
&{1\over 2}{d\over dt}|\Phi|_{2,\Omega}^2+|\nabla\Phi|_{2,\Omega}^2-\intop_{-a}^a \Phi\bigg|_{r=0}^{r=R}dz\cr
&=\intop_\Omega(\omega_r\partial_r+\omega_z\partial_z){v_r\over r}\Phi dx+\intop_\Omega\bar F_r\Phi dx.\cr}
\label{4.2}
\end{equation}
To derive the second term on the l.h.s. of (\ref{4.2}) we consider (\ref{1.17}) in
$$
\bar\Omega=\{x\in\R^3\colon r<R,z\in(-a,a),\varphi\in(0,2\pi)\}.
$$
Then by the Green theorem and boundary conditions we obtain the second term on the l.h.s. of (\ref{4.2}) on $\bar\Omega$. Using that all quantities in (\ref{4.2}) do not depend on $\varphi$ we can drop integration with respect to $\varphi$ and obtain (\ref{4.2}).

Using $\Phi|_{r=R}=0$ and (\ref{1.13}) we have
\begin{equation}\eqal{
&{1\over 2}{d\over dt}|\Phi|_{2,\Omega}^2+|\nabla\Phi|_{2,\Omega}^2\le \intop_\Omega(\omega_r\partial_r+\omega_z\partial_z){v_r\over r}\Phi dx+\intop_\Omega\bar F_r\Phi dx\cr
&\le\intop_\Omega\bigg(-v_{\varphi,z}\partial_r{v_r\over r}+{\partial_r(rv_\varphi)\over r}\partial_z{v_r\over r}\bigg)\Phi rdrdz+\intop_\Omega\bar F_r\Phi dx\cr
&=\intop_\Omega v_\varphi\bigg(\partial_z\partial_r{v_r\over r}\bigg)\Phi+\partial_r{v_r\over r}\partial_z\Phi\bigg)dx\cr
&\quad+\intop_\Omega\partial_r\bigg(rv_\varphi\partial_z{v_r\over r}\Phi\bigg)drdz-\intop_\Omega v_\varphi\bigg(\bigg(\partial_z\partial_r{v_r\over r}\bigg)\Phi+\partial_z{v_r\over r}\partial_r\Phi\bigg)dx\cr
&\quad+\intop_\Omega\bar F_r\Phi dx=\intop_{-a}^arv_\varphi\partial_z{v_r\over r}\Phi\bigg|_{r=0}^{r=R}dz+\intop_\Omega v_\varphi\bigg(\partial_r{v_r\over r}\partial_z\Phi-\partial_z{v_r\over r}\partial_r\Phi\bigg)dx\cr
&\quad+\intop_\Omega\bar F_r\Phi dx\equiv\intop_{-a}^arv_\varphi\partial_z{v_r\over r}\Phi\bigg|_{r=0}^{r=R}dz+I+\intop_\Omega\bar F_r\Phi dx,\cr}
\label{4.3}
\end{equation}
where using the periodic boundary conditions on $S_2$, the boundary term vanishes because $v_\varphi|_{r=R}=0$, $v_r|_{r=R}=0$, $\Phi|_{r=R}=0$ and
$$
\intop_{-a}^arv_\varphi\partial_z{v_r\over r}\Phi\bigg|_{r=0}dz=0
$$
because \cite{LW} implies the following expansions near the axis of symmetry
$$\eqal{
&v_\varphi=a_1(z,t)r+a_2(z,t)r^3+\cdots,\cr
&v_r=\bar a_1(z,t)r+\bar a_2(z,t)r^3+\cdots\cr}
$$
and $\Phi=-{v_{\varphi,z}\over r}$.

Finally,  $I\le I_1+I_2$, where
\begin{equation}\eqal{
&I_1\le\intop_\Omega\bigg|v_\varphi\partial_r{v_r\over r}\Phi_{,z}\bigg|dx,\cr
&I_2\le\intop_\Omega\bigg|v_\varphi\partial_z{v_r\over r}\Phi_{,r}\bigg|dx.\cr}
\label{4.4}
\end{equation}
Now, we estimate $I_1$ and $I_2$. Recall that ${v_r\over r}=-\psi_{1,z}$. Then
$$\eqal{
I_1&\le\intop_\Omega|v_\varphi\psi_{1,rz}\Phi_{,z}|dx=\intop_\Omega\bigg|rv_\varphi {\psi_{1,rz}\over r}\Phi_{,z}\bigg|dx\cr
&\le|rv_\varphi|_{\infty,\Omega}\bigg|{\psi_{1,rz}\over r}\bigg|_{2,\Omega}|\Phi_{,z}|_{2,\Omega}\equiv I_1^1.\cr}
$$
From (\ref{2.7}) and (\ref{3.22}) we have (recall that $\Gamma=\omega_1$)
\begin{equation}
I_1^1\le cD_2|\Gamma_{,z}|_{2,\Omega}|\Phi_{,z}|_{2,\Omega}.
\label{4.5}
\end{equation}
Similarly, we calculate
\begin{equation}\eqal{
I_2&\le\intop_\Omega|v_\varphi\psi_{1,zz}\Phi_{,r}|dx\le|rv_\varphi|_{\infty,\Omega} \bigg|{\psi_{1,zz}\over r}\bigg|_{2,\Omega}|\Phi_{,r}|_{2,\Omega}\cr
&\le cD_2|\Gamma_{,z}|_{2,\Omega}|\Phi_{,r}|_{2,\Omega},\cr}
\label{4.6}
\end{equation}
where (\ref{3.25}) is used.

Finally, the last term on the r.h.s. of (\ref{4.3}) is bounded by
\begin{equation}
\varepsilon|\Phi|_{6,\Omega}^2+c(1/\varepsilon)|\bar F_r|_{6/5,\Omega}^2.
\label{4.7}
\end{equation}
Using estimates (\ref{4.5})--(\ref{4.7}) in (\ref{4.3}), assuming that $\varepsilon$ is sufficiently small and applying the Poincar\'e inequality we obtain
\begin{equation}
{d\over dt}|\Phi|_{2,\Omega}^2+\|\Phi\|_{1,\Omega}^2\le cD_2|\Gamma_{,z}|_{2,\Omega}|\nabla\Phi|_{2,\Omega}+c|\bar F_r|_{6/5,\Omega}^2.
\label{4.8}
\end{equation}
Multiplying (\ref{1.18}) by $\Gamma$, integrating over $\Omega$, using the boundary conditions and explanation about applying the Green theorem appeared below (\ref{4.2}) we obtain
\begin{equation}\eqal{
&{1\over 2}{d\over dt}|\Gamma|_{2,\Omega}^2+|\nabla\Gamma|_{2,\Omega}^2-\intop_{-a}^a\Gamma^2 \bigg|_{r=0}^{r=R}dz\cr
&\le 2\bigg|\intop_\Omega{v_\varphi\over r}\Phi\Gamma dx\bigg|+\intop_\Omega\bar F_\varphi\Gamma dx.\cr}
\label{4.9}
\end{equation}
Using that $\Gamma|_{r=R}=0$, applying the H\"older and Young inequalities to the last term on the r.h.s. of (\ref{4.9}) and using the Poincar\'e inequality we derive
\begin{equation}
{d\over dt}|\Gamma|_{2,\Omega}^2+\|\Gamma\|_{1,\Omega}^2\le 2\intop_\Omega{v_\varphi\over r}\Phi\Gamma dx+c|\bar F_\varphi|_{6/5,\Omega}^2.
\label{4.10}
\end{equation}
From (\ref{4.8}) and (\ref{4.10}) we have
\begin{equation}\eqal{
&{d\over dt}(|\Phi|_{2,\Omega}^2+|\Gamma|_{2,\Omega}^2)+\|\Phi\|_{1,\Omega}^2+ \|\Gamma\|_{1,\Omega}^2\le\phi(D_2)\bigg|\intop_\Omega{v_\varphi\over r}\Phi\Gamma dxdt'\bigg|\cr
&\quad+\phi(D_2)(|\bar F_r|_{6/5,\Omega}^2+|\bar F_\varphi|_{6/5,\Omega}^2),\cr}
\label{4.11}
\end{equation}
where $\phi$ is an increasing positive function. Integrating (\ref{4.11}) with respect to time yields (\ref{4.1}). This ends the proof.
\end{proof}

\begin{lemma}\label{l4.2}
Let the assumptions of Lemma \ref{l6.1} hold.\\ 
Let $v_\varphi\in L_\infty(0,t;L_d(\Omega))$, $d>3$. Let $\theta=\big(1-{3\over d}\big)\varepsilon_1-{3\over d}\varepsilon_2>0$, $\varepsilon=\varepsilon_1+\varepsilon_2$. Let $\varepsilon_0>0$ be arbitrary small.\\
Then
\begin{equation}\eqal{
I_3&\le c|v_\varphi|_{d,\infty,\Omega^t}^\varepsilon[c_1(1+ |v_\varphi|_{\infty,\Omega^t}^{{1\over 2}\theta\varepsilon_0}) \|\Gamma\|_{1,2,\Omega^t}^{{1\over 2}\theta}\cr
&\quad+c_2]|\nabla\Phi|_{2,\Omega^t}^{1-\theta}|\nabla\Gamma|_{2,\Omega^t},\cr}
\label{4.12}
\end{equation}
where $c_1$, $c_2$ depending on $D_5$, $D_7$ are introduced in $L_1^4$ below.
\end{lemma}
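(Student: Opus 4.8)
The plan is to treat $I_3=\intop_{\Omega^t}\frac{|v_\varphi|}{r}|\Phi||\Gamma|\,dxdt'$ as a trilinear expression in $(v_\varphi,\Phi,\Gamma)$ and to remove the singular weight $1/r$ — which here genuinely matters, since neither $\Phi$ nor $\Gamma$ vanishes on the axis $r=0$ — by means of the weighted interpolation inequality (\ref{2.12}) of Lemma \ref{l2.7}. The first step is to peel off the swirl: writing $|v_\varphi|=|v_\varphi|^\varepsilon|v_\varphi|^{1-\varepsilon}$ with $\varepsilon=\varepsilon_1+\varepsilon_2$ and using Lemma \ref{l2.3} in the form $|v_\varphi|^{1-\varepsilon}=|rv_\varphi|^{1-\varepsilon}r^{-(1-\varepsilon)}\le D_2^{1-\varepsilon}r^{-(1-\varepsilon)}$, one gets
$$
I_3\le D_2^{1-\varepsilon}\intop_{\Omega^t}|v_\varphi|^\varepsilon\,\frac{|\Phi|}{r^{1-\varepsilon_1}}\,\frac{|\Gamma|}{r^{1-\varepsilon_2}}\,dxdt',
$$
the leftover weight $r^{-(2-\varepsilon)}$ being distributed asymmetrically, $r^{-(1-\varepsilon_1)}$ onto $\Phi$ and $r^{-(1-\varepsilon_2)}$ onto $\Gamma$. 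A H\"older inequality in $x$ with exponents $\big(\frac d\varepsilon,p_1,2\big)$, $\frac\varepsilon d+\frac1{p_1}+\frac12=1$ (a choice that works, so $p_1=\frac{2d}{d-2\varepsilon}$), then gives, for fixed $t$,
$$
I_3(t)\le D_2^{1-\varepsilon}|v_\varphi|_{d,\Omega}^\varepsilon\Big|r^{-(1-\varepsilon_1)}\Phi\Big|_{p_1,\Omega}\Big|r^{-(1-\varepsilon_2)}\Gamma\Big|_{2,\Omega}.
$$

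Next I would apply (\ref{2.12}) to each of the last two factors (legitimate since $\Phi,\Gamma$ vanish on $S_1$, hence extend by zero across it): to the $\Phi$-factor with $\bar r=2$ and $s=p_1(1-\varepsilon_1)\le2$, the exponent $q=p_1$ landing in the admissible interval of Lemma \ref{l2.7} precisely because $\theta>0$; to the $\Gamma$-factor with $\bar r=2$ and $s=2(1-\varepsilon_2)\le2$. Reading off the exponents in (\ref{2.12}), using $p_1=\frac{2d}{d-2\varepsilon}$ and $\varepsilon=\varepsilon_1+\varepsilon_2$, the gradient exponent for $\Phi$ comes out to be exactly $1-\theta$ with $\theta=(1-\frac3d)\varepsilon_1-\frac3d\varepsilon_2$, while the gradient exponent for $\Gamma$ is $1-\varepsilon_2$, so that
$$
\Big|r^{-(1-\varepsilon_1)}\Phi\Big|_{p_1,\Omega}\le c|\Phi|_{2,\Omega}^\theta|\nabla\Phi|_{2,\Omega}^{1-\theta},\qquad\Big|r^{-(1-\varepsilon_2)}\Gamma\Big|_{2,\Omega}\le c|\Gamma|_{2,\Omega}^{\varepsilon_2}|\nabla\Gamma|_{2,\Omega}^{1-\varepsilon_2}.
$$
Integrating in $t$, pulling $|v_\varphi|_{d,\Omega}^\varepsilon$ out in $L_\infty(0,t)$ and applying H\"older in $t$ to the four remaining factors with exponents $\frac2\theta,\frac2{1-\theta},\frac2{\varepsilon_2},\frac2{1-\varepsilon_2}$ (whose reciprocals sum to $1$), one arrives at
$$
I_3\le c|v_\varphi|_{d,\infty,\Omega^t}^\varepsilon|\Phi|_{2,\Omega^t}^\theta|\nabla\Phi|_{2,\Omega^t}^{1-\theta}|\Gamma|_{2,\Omega^t}^{\varepsilon_2}|\nabla\Gamma|_{2,\Omega^t}^{1-\varepsilon_2}.
$$

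It remains to absorb $|\Gamma|_{2,\Omega^t}^{\varepsilon_2}|\nabla\Gamma|_{2,\Omega^t}^{1-\varepsilon_2}$ and $|\Phi|_{2,\Omega^t}^\theta$ — and this is where the asymmetric treatment of $\Phi$ and $\Gamma$ enters. For the $\Gamma$-part the Poincar\'e inequality (available since $\Gamma|_{S_1}=0$) gives $|\Gamma|_{2,\Omega^t}^{\varepsilon_2}|\nabla\Gamma|_{2,\Omega^t}^{1-\varepsilon_2}\le c|\nabla\Gamma|_{2,\Omega^t}$. For the $\Phi$-part I would instead substitute the energy-type bound for $\|\Phi\|_{V(\Omega^t)}$ furnished by Lemma \ref{l6.1}, of the shape $\|\Phi\|_{V(\Omega^t)}^2\le c(D_5,D_7)(1+|v_\varphi|_{\infty,\Omega^t}^{\varepsilon_0})\|\Gamma\|_{1,2,\Omega^t}+c(D_5,D_7)$, and then use subadditivity of $t\mapsto t^\theta$ and $t\mapsto t^{\theta/2}$ (valid for $0<\theta\le1$) to obtain $|\Phi|_{2,\Omega^t}^\theta\le c_1(1+|v_\varphi|_{\infty,\Omega^t}^{\theta\varepsilon_0/2})\|\Gamma\|_{1,2,\Omega^t}^{\theta/2}+c_2$ with $c_1,c_2$ depending on $D_5,D_7$; these are the constants called $L_1^4$ below. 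Combining the last three displays yields (\ref{4.12}).

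The step I expect to be the main obstacle is the simultaneous choice of $(\varepsilon_1,\varepsilon_2)$: it has to be compatible at once with the admissibility ranges of Lemma \ref{l2.7} for both weighted norms, with the H\"older constraint in $x$ that fixes $p_1$ and $p_2$, and with $\theta>0$ — and the point is that these are all jointly satisfiable exactly when $d>3$ and $\varepsilon_2$ is small enough relative to $\varepsilon_1$ (equivalently $\theta>0$). The condition $\theta>0$ is decisive twice over: it is what places $q=p_1$ in the admissible interval of Lemma \ref{l2.7}, and it is what forces the gradient exponent $1-\theta$ of $\Phi$ to be strictly below $1$, which is the feature that later permits the absorption in Lemma \ref{l6.1}. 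The secondary delicate point is keeping the spurious power of $|v_\varphi|_{\infty,\Omega^t}$ down to the arbitrarily small $\theta\varepsilon_0/2$, achieved by spending instead an inexpensive power $\varepsilon$ of the weaker quantity $|v_\varphi|_{d,\infty,\Omega^t}$; this rests on the (superficially circular but, resolved by absorption, legitimate) use of the Lemma \ref{l6.1} bound for $\|\Phi\|_{V(\Omega^t)}$.
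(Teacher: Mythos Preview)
Your proof is correct and follows essentially the same route as the paper: peel off the swirl via $|rv_\varphi|\le D_2$, split the residual weight as $r^{-(1-\varepsilon_1)}$ on $\Phi$ and $r^{-(1-\varepsilon_2)}$ on $\Gamma$, H\"older with $v_\varphi$ in $L_d$ to produce the exponent $q=p_1=2d/(d-2\varepsilon)$, apply the weighted interpolation inequality (\ref{2.12}) to the $\Phi$-factor (yielding the exponent pair $(\theta,1-\theta)$), and finally insert the Lemma~\ref{l6.1} bound for $|\Phi|_{2,\Omega^t}$. The only cosmetic differences are that the paper does Cauchy--Schwarz in $(x,t)$ first and then H\"older in $x$ inside the resulting $L$-term (rather than your three-way H\"older in $x$ followed by four-way H\"older in $t$), and that the paper handles $|\Gamma/r^{1-\varepsilon_2}|_{2,\Omega^t}$ by the weighted Hardy inequality directly, whereas you reach the same bound via Lemma~\ref{l2.7} with $q=2$ plus Poincar\'e.
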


\begin{proof}
We examine
$$\eqal{
I_3&=\intop_{\Omega^t}\bigg|rv_\varphi{\Phi\over r}{\Gamma\over r}\bigg|dxdt'\cr
&\le\intop_{\Omega^t}|rv_\varphi|^{1-\varepsilon}|v_\varphi|^\varepsilon \bigg|{\Phi\over r^{1-\varepsilon_1}}\bigg|\bigg|{\Gamma\over r^{1-\varepsilon_2}}\bigg|dxdt'=I_3^1,\cr}
$$
where $\varepsilon=\varepsilon_1+\varepsilon_2$ and $\varepsilon_i$, $i=1,2$, are positive numbers.

Using (\ref{2.7}) and applying the H\"older inequality in $I_3^1$ yields
$$\eqal{
I_3^1&\le D_2^{1-\varepsilon}\bigg(\intop_{\Omega^t}|v_\varphi|^{2\varepsilon} \bigg|{\Phi\over r^{1-\varepsilon_1}}\bigg|^2dxdt'\bigg)^{1/2}\bigg|{\Gamma\over r^{1-\varepsilon_2}}\bigg|_{2,\Omega^t}\cr
&\equiv D_2^{1-\varepsilon}L|\Gamma/r^{1-\varepsilon_2}|_{2,\Omega^t}\equiv I_3^2.\cr}
$$
By the Hardy inequality we obtain
\begin{equation}
\bigg\|{\Gamma\over r}\bigg\|_{L_{2,\varepsilon_2}(\Omega^t)}\le c\|\nabla\Gamma\|_{L_{2,\varepsilon_2}(\Omega^t)}\le cR^{\varepsilon_2}|\nabla\Gamma|_{2,\Omega^t}.
\label{4.13}
\end{equation}
Now, we estimate $L$,
$$\eqal{
L&=\bigg(\intop_0^t\intop_\Omega|v_\varphi|^{2\varepsilon}\bigg|{\Phi\over r^{1-\varepsilon_1}}\bigg|^2dxdt'\bigg)^{1/2}\cr
&\le\bigg[\intop_0^t|v_\varphi|_{2\varepsilon\sigma,\Omega}^{2\varepsilon}\bigg( \intop_\Omega\bigg|{\Phi\over r^{1-\varepsilon_1}}\bigg|^qdx\bigg)^{2/q}dt'\bigg]^{1/2}\equiv L_1,\cr}
$$
where $1/\sigma+1/\sigma'=1$, $q=2\sigma'$. Let $d=2\varepsilon\sigma$. Then
$$
\sigma'={d\over d-2\varepsilon}\quad {\rm so}\quad q={2d\over d-2\varepsilon}.
$$
Continuing,
$$
L_1\le\sup_t|v_\varphi|_{d,\Omega}^\varepsilon\bigg(\intop_0^t\bigg|{\Phi\over r^{1-\varepsilon_1}}\bigg|_{q,\Omega}^2dt'\bigg)^{1/2}\equiv L_1^1L_1^2.
$$
Now, we estimate the second factor $L_1^2$.

For this purpose we use Lemma \ref{l2.7} for $r=2$. Let ${s\over q}=1-\varepsilon_1$. Then $q\in[2,2(3-s)]$. Since $s=(1-\varepsilon_1)q$ we have the restriction $2<q<6-2s=6-2(1-\varepsilon_1)q$. Then
\begin{equation}
2<q\le{6\over 3-2\varepsilon_1}
\label{4.14}
\end{equation}
and ${6\over 3-2\varepsilon_1}>2$ for any $\varepsilon_1\in(0,1)$.

Hence, Lemma \ref{l2.7} implies
$$\eqal{
L_1^2&=\bigg(\intop_0^t\bigg|{\Phi\over r^{1-\varepsilon_1}}\bigg|_{q,\Omega}^2dt'\bigg)^{1/2}\cr
&\le c\bigg(\intop_0^t|\Phi|_{2,\Omega}^{2({3-s\over q}-{1\over 2})} |\nabla\Phi|^{2({3\over 2}-{3-s\over q})}dt'\bigg)^{1/2}\cr
&\le c|\Phi|_{2,\Omega^t}^{{3-s\over q}-{1\over 2}} |\nabla\Phi|_{2,\Omega^t}^{{3\over 2}-{3-s\over q}}\equiv L_1^3,\cr}
$$
where we used that for $\theta={3-s\over q}-{1\over 2}$ $1-\theta={3\over 2}-{3-s\over q}$ so the H\"older inequality can be applied.

Using (\ref{6.1}) in $L_1^3$, we have
$$\eqal{
L_1^3&\le c(D_5^{{1\over 2}\theta}|\nabla\Gamma|_{2,\Omega^t}^{{1\over 2}\theta}+ D_5^{{1\over 2}\theta}|v_\varphi|_{\infty,\Omega^t}^{{1\over 2}\theta\varepsilon_0}\|\Gamma\|_{1,2,\Omega^t}^{{1\over 2}\theta}+D_7^{{1\over 2}\theta})\cdot|\nabla\Phi|_{2,\Omega^t}^{1-\theta}\cr
&\equiv[c_1(1+|v_\varphi|_{\infty,\Omega^t}^{{1\over 2}\theta\varepsilon_0}) \|\Gamma\|_{1,2,\Omega^t}^{{1\over 2}\theta}+c_2] |\nabla\Phi|_{2,\Omega^t}^{1-\theta}\equiv L_1^4,\cr}
$$
where $c_1$, $c_2$ depend on $D_5$, $D_7$.

To justify the above inequality we have to know that the following inequalities hold
\begin{equation}
\theta={3-s\over q}-{1\over 2}>0
\label{4.15}
\end{equation}
and
\begin{equation}
1-\theta={3\over 2}-{3-s\over q}>0.
\label{4.16}
\end{equation}
Consider (\ref{4.15}). Using the form of $q$ and ${s\over q}$ we have
$$
{3\over q}-{s\over q}-{1\over 2}>0\quad {\rm so}\quad {3(d-2\varepsilon)\over 2d}-(1-\varepsilon_1)-{1\over 2}>0.
$$
Hence
$$
{3\over 2}-{3\over d}\varepsilon-1+\varepsilon_1-{1\over 2}>0\quad {\rm so}\quad \varepsilon_1-{3\over d}(\varepsilon_1+\varepsilon_2)>0.
$$
Therefore the following inequality
\begin{equation}
\bigg(1-{3\over d}\bigg)\varepsilon_1-{3\over d}\varepsilon_2>0
\label{4.17}
\end{equation}
holds for $d>3$ and $\varepsilon_2$ sufficiently small. Moreover, (\ref{4.17}) implies
\begin{equation}
\varepsilon_1>{3\over d}{d\over d-3}\varepsilon_2={3\over d-3}\varepsilon_2.
\label{4.18}
\end{equation}
To exmine (\ref{4.16}) we calculate
\begin{equation}
{3\over 2}-{3(d-2\varepsilon)\over 2d}+1-\varepsilon_1=1+{3\over d}\varepsilon-\varepsilon_1=1-\bigg(1-{3\over d}\bigg)\varepsilon_1+{3\over d}\varepsilon_2.
\label{4.19}
\end{equation}
Since (\ref{4.19}) must be positive we have the restriction
\begin{equation}
1+{3\over d}\varepsilon_2>\bigg(1-{3\over d}\bigg)\varepsilon_1
\label{4.20}
\end{equation}
Using (\ref{4.18}) in (\ref{4.20}) implies
$$
1+{3\over d}\varepsilon_2>{3\over d}\varepsilon_2
$$
so there is no contradiction.

Hence, we have
\begin{equation}\eqal{
&\theta=\bigg(1-{3\over d}\bigg)\varepsilon_1-{3\over d}\varepsilon_2,\cr
&1-\theta=1-\bigg(1-{3\over d}\bigg)\varepsilon_1+{3\over d}\varepsilon_2,\cr}
\label{4.21}
\end{equation}
where $d>3$.

Finally
$$
I_3\le c|v_\varphi|_{d,\infty,\Omega^t}^\varepsilon[c_1(1+ |v_\varphi|_{\infty,\Omega^t}^{{1\over 2}\theta\varepsilon_0})\|\Gamma\|_{1,2,\Omega^t}^{{1\over 2}\theta}+c_2] |\nabla\Phi|_{2,\Omega^t}^{1-\theta}\cdot|\nabla\Gamma|_{2,\Omega^t}.
$$
This implies (\ref{4.12}) and ends the proof.
\end{proof}

Introduce the quantity
\begin{equation}
X(t)=\|\Phi\|_{V(\Omega^t)}+\|\Gamma\|_{V(\Omega^t)}.
\label{4.22}
\end{equation}

\begin{lemma}\label{l4.3}
Let the assumptions of Lemmas \ref{l4.1} and \ref{l4.2} hold. Let $\theta=\big(1-{3\over d}\big)\varepsilon_1-{3\over d}\varepsilon_2$, $\varepsilon=\varepsilon_1+\varepsilon_2$.\\
Then
\begin{equation}
X^2\le c_0|v_\varphi|_{d,\infty,\Omega^t}^{4\varepsilon\over\theta}(1+ |v_\varphi|_{\infty,\Omega^t}^{2\varepsilon_0})+c_0| v_\varphi|_{d,\infty,\Omega^t}^{2\varepsilon\over\theta}+D_8,
\label{4.23}
\end{equation}
where $c_0=\phi(D_5,D_7)$.
\end{lemma}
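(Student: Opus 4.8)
\emph{Proof proposal.} The plan is to substitute the bound for $I_3$ from Lemma \ref{l4.2} into the energy inequality (\ref{4.1}) of Lemma \ref{l4.1}, replace every norm of $\Phi$ and $\Gamma$ on the right-hand side by the single quantity $X(t)$, and then close the estimate by Young's inequality.

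First I would note that the left-hand side of (\ref{4.1}) controls a fixed multiple of $X^2$. Indeed $\|\Phi\|_{1,2,\Omega^t}^2\ge|\nabla\Phi|_{2,\Omega^t}^2$ and similarly for $\Gamma$, while the right-hand side of (\ref{4.1}) is nondecreasing in $t$, so taking the supremum over $[0,t]$ in the terms $|\Phi(t)|_{2,\Omega}^2$, $|\Gamma(t)|_{2,\Omega}^2$ is legitimate and yields $|\Phi|_{2,\infty,\Omega^t}^2$, $|\Gamma|_{2,\infty,\Omega^t}^2$. Since $X^2\le 4\big(|\Phi|_{2,\infty,\Omega^t}^2+|\nabla\Phi|_{2,\Omega^t}^2+|\Gamma|_{2,\infty,\Omega^t}^2+|\nabla\Gamma|_{2,\Omega^t}^2\big)$, inequality (\ref{4.1}) gives $c\,X^2\le\phi(D_2)I_3+D_8$ with $c=c(\nu)>0$.

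Next, in (\ref{4.12}) I would use the trivial bounds $\|\Gamma\|_{1,2,\Omega^t}\le X$, $|\nabla\Phi|_{2,\Omega^t}\le X$, $|\nabla\Gamma|_{2,\Omega^t}\le X$ together with $X^{\frac12\theta}X^{1-\theta}X=X^{2-\frac12\theta}$ and $X^{1-\theta}X=X^{2-\theta}$, which turns (\ref{4.12}) into
\[
I_3\le c|v_\varphi|_{d,\infty,\Omega^t}^{\varepsilon}\Big[c_1\big(1+|v_\varphi|_{\infty,\Omega^t}^{\frac12\theta\varepsilon_0}\big)X^{2-\frac12\theta}+c_2X^{2-\theta}\Big].
\]
Combining with the previous display produces an inequality of the shape $X^2\le A\,X^{2-\frac12\theta}+B\,X^{2-\theta}+D_8$, where $A=c_0|v_\varphi|_{d,\infty,\Omega^t}^{\varepsilon}\big(1+|v_\varphi|_{\infty,\Omega^t}^{\frac12\theta\varepsilon_0}\big)$, $B=c_0|v_\varphi|_{d,\infty,\Omega^t}^{\varepsilon}$, and $c_0=\phi(D_5,D_7)$ (recall $c_1,c_2$ depend on $D_5,D_7$). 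Since $0<\theta<1$ by the hypotheses of Lemma \ref{l4.2} (so $2-\frac12\theta$ and $2-\theta$ lie in $(1,2)$), I would apply Young's inequality, $A\,X^{2-\frac12\theta}\le\delta X^2+c(\delta)A^{4/\theta}$ and $B\,X^{2-\theta}\le\delta X^2+c(\delta)B^{2/\theta}$, choose $\delta$ small enough to absorb the $X^2$ terms into the left-hand side, and finally use $\big(1+x^{\frac12\theta\varepsilon_0}\big)^{4/\theta}\le c\big(1+x^{2\varepsilon_0}\big)$ to rewrite $A^{4/\theta}=c_0|v_\varphi|_{d,\infty,\Omega^t}^{4\varepsilon/\theta}\big(1+|v_\varphi|_{\infty,\Omega^t}^{2\varepsilon_0}\big)$ and $B^{2/\theta}=c_0|v_\varphi|_{d,\infty,\Omega^t}^{2\varepsilon/\theta}$. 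This is exactly (\ref{4.23}).

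The step deserving care is the absorption of $\delta X^2$ into the left-hand side: it is only valid once $X(t)$ is known to be finite on $[0,t]$. As usual for such a priori estimates one derives the inequality first for the regularized (Galerkin) approximations — for which the finiteness of $I_3$ is part of the standing hypotheses, exactly as in Lemma \ref{l4.1} — and then passes to the limit. Apart from this, the only other thing to check is that $\theta>0$ (and $\theta<1$), which is guaranteed precisely by $d>3$ and $\varepsilon_2$ sufficiently small, as recorded in (\ref{4.17})–(\ref{4.21}).
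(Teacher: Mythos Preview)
Your proposal is correct and follows essentially the same route as the paper: combine (\ref{4.1}) with (\ref{4.12}), bound all $\Phi,\Gamma$-norms on the right by $X$ to obtain $X^2\le\alpha_1 X^{2-\frac12\theta}+\alpha_2 X^{2-\theta}+D_8$, and close by Young's inequality with exponents $4/\theta$ and $2/\theta$. Your added remarks on passing to the supremum in time and on the need for a priori finiteness of $X$ are sound but not present in the paper's (terser) argument.
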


\begin{proof}
In view of notation (\ref{4.22}) inequalities (\ref{4.1}) and (\ref{4.12}) imply
\begin{equation}\eqal{
X^2&\le c|v_\varphi|_{d,\infty,\Omega^t}^\varepsilon[c_1(1+ |v_\varphi|_{\infty,\Omega^t}^{{1\over 2}\theta\varepsilon_0})X^{1-{1\over 2}\theta}\cr
&\quad+c_2X^{1-\theta}]X+D_8\equiv\alpha_1X^{2-{1\over 2}\theta}+\alpha_2X^{2-\theta}+D_8.\cr}
\label{4.24}
\end{equation}
Applying the Young inequality in (\ref{4.24}) implies
$$
X^2\le c\alpha_1^{4\over\theta}+c\alpha_2^{2\over\theta}+D_8.
$$
This yields (\ref{4.23}) and concludes the proof.
\end{proof}

\begin{remark}\label{r4.4}
Consider exponents in (\ref{4.23}). Then
\begin{equation}
\delta={4\varepsilon\over\theta}={4\varepsilon\over(1-{3\over d})\varepsilon_1-{3\over d}\varepsilon_2},\quad \delta_0={2\varepsilon\over(1-{3\over d})\varepsilon_1-{3\over d}\varepsilon_2}.
\label{4.25}
\end{equation}
For $\varepsilon_2$ small we have
$$
\delta={4\over 1-{3\over d}}+\varepsilon_*,\quad \delta_0={2\over 1-{3\over d}}+\varepsilon_{0*},
$$
where $\varepsilon_*$, $\varepsilon_{0*}$ are positive number which can be chosen very small.

For $d=12$ it follows that
\begin{equation}
\delta={16\over 3}+\varepsilon_*,\quad \delta_0={8\over 3}+\varepsilon_{0*}.
\label{4.26}
\end{equation}
This ends the remark.
\end{remark}

\begin{lemma}\label{l4.5}
Assume that $\varepsilon_1>11\varepsilon_2$, $s>1$ and
$$
D_9^s(s)=s^2|f_\varphi|_{{3s\over 2s+1},s,\Omega^t}^s+|v_\varphi(0)|_{s,\Omega}^s<\infty.
$$
Then
\begin{equation}
|v_\varphi|_{12,\infty,\Omega^t}^6\le c|v_\varphi|_{\infty,\Omega^t}^{{6(3\varepsilon_1-\varepsilon_2)\over \varepsilon_1-11\varepsilon_2}\varepsilon_0}+\phi(D_5,D_7)+c(D_8+D_9^{12})
\label{4.27}
\end{equation}
\end{lemma}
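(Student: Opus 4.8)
The plan is to run an $L_{12}$–energy estimate for $v_\varphi$ itself, based on equation $(\ref{1.7})_2$, and then to feed the bound $(\ref{4.23})$ of Lemma \ref{l4.3} (with $d=12$) into the resulting inequality. First I would multiply $(\ref{1.7})_2$ by $v_\varphi|v_\varphi|^{10}$ and integrate over $\Omega$ (equivalently over the three–dimensional $\bar\Omega$, then drop the $\varphi$–integration). Using $v_\varphi|_{S_1}=0$, the periodic conditions on $S_2$, $\divv v=0$ and the expansion $v_\varphi=a_1(z,t)r+\cdots$ near the axis (so that all contributions at $r=0$ drop, exactly as in the proof of Lemma \ref{l2.3}), this gives, with $g:=|v_\varphi|^6$,
$$
{1\over 12}{d\over dt}|v_\varphi|_{12,\Omega}^{12}+{11\nu\over 36}|\nabla g|_{2,\Omega}^2+\nu\bigg|{g\over r}\bigg|_{2,\Omega}^2=-\intop_\Omega{v_r\over r}|v_\varphi|^{12}dx+\intop_\Omega f_\varphi v_\varphi|v_\varphi|^{10}dx.
$$
After integrating in $t$ and taking $\sup_{t'\le t}$, the left–hand side dominates $\|g\|_{V(\Omega^t)}^2$, and in particular $|v_\varphi|_{12,\infty,\Omega^t}^6=|g|_{2,\infty,\Omega^t}\le\|g\|_{V(\Omega^t)}$; so it suffices to bound $\|g\|_{V(\Omega^t)}^2$.

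For the force term I would use $|v_\varphi|^{11}=g^{11/6}$, H\"older in $x$ with exponents $\big({36\over 25},{36\over 11}\big)$, H\"older in $t$ with exponents $\big(12,{12\over 11}\big)$, the imbedding $|g|_{6,\Omega}\le c|\nabla g|_{2,\Omega}$ (valid since $g|_{S_1}=0$) and Young's inequality, to obtain $\big|\intop_{\Omega^t}f_\varphi v_\varphi|v_\varphi|^{10}dxdt'\big|\le c|f_\varphi|_{36/25,12,\Omega^t}\|g\|_{V(\Omega^t)}^{11/6}\le\varepsilon\|g\|_{V(\Omega^t)}^2+c(\varepsilon)D_9^{12}$; the term $|v_\varphi(0)|_{12,\Omega}^{12}$ coming from the initial value of the left–hand side is likewise absorbed into $D_9^{12}=D_9^{12}(12)$.

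The main term is the convection term. By $(\ref{1.21})$ we have $v_r/r=-\psi_{1,z}$, so $-\intop_\Omega{v_r\over r}|v_\varphi|^{12}dx=\intop_\Omega\psi_{1,z}|v_\varphi|^{12}dx$. Following the scheme of Lemma \ref{l4.2}, I would split $|v_\varphi|^{12}$ into a power of $rv_\varphi$ (bounded in $L_\infty$ by $D_2$ through $(\ref{2.7})$), a small power of $v_\varphi$ built so as to reproduce $|v_\varphi|_{12,\infty,\Omega^t}$ together with $|v_\varphi|_{\infty,\Omega^t}^{\varepsilon_0}$, and a remaining factor carrying the weights $r^{-(1-\varepsilon_1)}$ and $r^{-(1-\varepsilon_2)}$; these weights are matched against the weighted stream–function bounds $(\ref{3.22})$ of Lemma \ref{l3.2} and $(\ref{3.25})$ of Lemma \ref{l3.3} — which is precisely where the hypothesis that $\psi_1$ (hence $v_z$) vanishes on the axis is used — together with the imbedding $(\ref{2.12})$ of Lemma \ref{l2.7} and the Hardy inequality $(\ref{4.13})$. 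Collecting the resulting $\Phi$– and $\Gamma$–norms into $X(t)$ of $(\ref{4.22})$ produces a bound of the shape
$$
\intop_{\Omega^t}\psi_{1,z}|v_\varphi|^{12}dxdt'\le c(D_2)|v_\varphi|_{12,\infty,\Omega^t}^{a}\big(1+|v_\varphi|_{\infty,\Omega^t}^{c\varepsilon_0}\big)X(t)^{b}+\varepsilon\|g\|_{V(\Omega^t)}^2+D_8,
$$
with $a,b,c>0$ depending only on $\varepsilon_1,\varepsilon_2$; the requirement that the $r$–powers balance and that the self–referential power of $|v_\varphi|_{12,\infty,\Omega^t}$ stay below $12$ after the substitution below is exactly what forces $\varepsilon_1>11\varepsilon_2$.

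Finally I would gather the three estimates, absorb all $\varepsilon\|g\|_{V(\Omega^t)}^2$–terms on the left, and insert $(\ref{4.23})$ for $X(t)^2$ (with $d=12$; then $\delta={16\over 3}+\varepsilon_*$, $\delta_0={8\over 3}+\varepsilon_{0*}$ by Remark \ref{r4.4}). The right–hand side becomes a sum of powers of $|v_\varphi|_{12,\infty,\Omega^t}$, all $<12$ by $\varepsilon_1>11\varepsilon_2$, times $\varepsilon_0$–powers of $|v_\varphi|_{\infty,\Omega^t}$ and the factor $\phi(D_5,D_7)$, plus $D_8$ and $D_9^{12}$; Young's inequality absorbs the $|v_\varphi|_{12,\infty,\Omega^t}$–powers into the left and a square root yields $(\ref{4.27})$. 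The hard part is the weighted H\"older bookkeeping in the convection term: arranging the split so the $r$–weights fit the weighted stream–function estimates of Section \ref{s3}, so the time integrability is not overspent (hence the extractions of $|v_\varphi|_{12,\infty,\Omega^t}$ and $|v_\varphi|_{\infty,\Omega^t}^{\varepsilon_0}$ are forced), and so that, after Young's inequality, the surviving power of $|v_\varphi|_{\infty,\Omega^t}$ is exactly ${6(3\varepsilon_1-\varepsilon_2)\over \varepsilon_1-11\varepsilon_2}\varepsilon_0$.
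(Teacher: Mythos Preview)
Your outline has a genuine gap in the treatment of the convection term, and it misses the one step that makes the paper's argument close. The estimates $(\ref{3.22})$ and $(\ref{3.25})$ you invoke control $\psi_{1,rz}/r$ and $\psi_{1,zz}/r$, i.e.\ \emph{derivatives} of $v_r/r$, while your term $\intop_{\Omega^t}\psi_{1,z}|v_\varphi|^{12}\,dxdt'$ contains only $\psi_{1,z}=-v_r/r$ itself; and the ``scheme of Lemma~\ref{l4.2}'' concerns $\intop(v_\varphi/r)\Phi\Gamma$, whose factors $\Phi,\Gamma$ are what carry the weights $r^{-(1-\varepsilon_i)}$ --- there is no $\Phi$ or $\Gamma$ in your integrand, so that splitting has no analogue here. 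If instead you run the natural estimate (integrate by parts in $z$, write $|v_\varphi|^{12}=(rv_\varphi)^6|v_\varphi|^6/r^6$, bound $\intop\psi_1^2/r^6$ by $\|\Gamma\|_{1,2,\Omega^t}^2$), you arrive at
\[
|v_\varphi|_{12,\infty,\Omega^t}^{12}\le cD_2^6\,|v_\varphi|_{\infty,\Omega^t}^{6}\,X^2+cD_9^{12},
\]
with a \emph{fixed} factor $|v_\varphi|_\infty^6$ that carries no $\varepsilon_0$. Inserting $(\ref{4.23})$ and absorbing $|v_\varphi|_{12,\infty}^\delta$ by Young then leaves $|v_\varphi|_\infty^{6\cdot 12/(12-\delta)}$ on the right, a large power independent of $\varepsilon_0$, and $(\ref{4.27})$ cannot follow.

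What the paper actually does after reaching $(\ref{4.31})$ is to \emph{divide} both sides by $|v_\varphi|_{\infty,\Omega^t}^{s-6}$ and then invoke the standing exclusion of the zero-- and small--swirl regimes (handled separately, see $(\ref{4.33})$--$(\ref{4.34})$ and Appendix~B) to assert the a~priori lower bounds $(\ref{4.35})$--$(\ref{4.36})$, namely $|v_\varphi|_{s,\infty,\Omega^t}/|v_\varphi|_{\infty,\Omega^t}\ge\bar c_0$ and $1/|v_\varphi|_{\infty,\Omega^t}\le\bar c_1$. This converts the estimate into $|v_\varphi|_{12,\infty,\Omega^t}^{6}\le c\,X^2+cD_9^{12}$ with the $|v_\varphi|_\infty$--factor \emph{removed}. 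Only then does substituting $(\ref{4.23})$ with $d=12$ give $(\ref{4.39})$; the condition $\varepsilon_1>11\varepsilon_2$ is exactly $\delta=16\varepsilon/(3\varepsilon_1-\varepsilon_2)<6$, and Young's inequality produces the exponent $2\varepsilon_0\cdot 6/(6-\delta)=6(3\varepsilon_1-\varepsilon_2)\varepsilon_0/(\varepsilon_1-11\varepsilon_2)$ in $(\ref{4.27})$. This division step, and the non-triviality assumption that justifies it, is the missing idea in your plan.
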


\begin{proof}
Multiply $(\ref{1.7})_2$ by $v_\varphi|v_\varphi|^{s-2}$, integrate over $\Omega$ and exploit the relation ${v_r\over r}=-\psi_{1,z}$. Then, we obtain
\begin{equation}\eqal{
&{1\over s}{d\over dt}|v_\varphi|_{s,\Omega}^s+{4\nu(s-1)\over s^2}|\nabla|v_\varphi|^{s/2}|_{2,\Omega}^2=\intop_\Omega\psi_{1,z}|v_\varphi|^sdx\cr
&\quad+\intop_\Omega f_\varphi v_\varphi|v_\varphi|^{s-2}dx.\cr}
\label{4.28}
\end{equation}
Integrating by parts in the first term on the r.h.s. of (\ref{4.28}) and applying the H\"older and Young inequalities yield
$$
\bigg|\intop_\Omega\psi_{1,z}|v_\varphi|^sdx\bigg|\le\varepsilon\big|\partial_z|v_\varphi|^{s/2}\big|_{2,\Omega}^2+c(1/\varepsilon)\intop_\Omega \psi_1^2|v_\varphi|^sdx.
$$
By the Poincar\'e inequality,
$$
|\nabla|v_\varphi|^{s/2}|_{2,\Omega}^2\ge c|v_\varphi|_{3s,\Omega}^s
$$
so we can estimate the second term on the r.h.s. of (\ref{4.28}) by
$$
|f_\varphi|_{{3s\over 2s+1},\Omega}|v_\varphi|_{3s,\Omega}^{s-1}\le\varepsilon_1 |v_\varphi|_{3s,\Omega}^s+c(1/\varepsilon_1)|f_\varphi|_{{3s\over 2s+1},\Omega}^s.
$$
Using the above estimates with sufficiently small $\varepsilon$, $\varepsilon_1$ in (\ref{4.28}) we derive the inequality
\begin{equation}\eqal{
&{1\over s}{d\over dt}|v_\varphi|_{s,\Omega}^s+{1\over s}|\nabla|v_\varphi|^{s/2}|_{2,\Omega}^2+{1\over s}|v_\varphi|_{3s,\Omega}^s\cr
&\le cs\intop_\Omega\psi_1^2|v_\varphi|^sdx+cs|f_\varphi|_{{3s\over 2s+1},\Omega}^s.\cr}
\label{4.29}
\end{equation}
In view of Lemma \ref{l2.3} the first term on the r.h.s. of (\ref{4.29}) is bounded by
$$
cs|u|_{\infty,\Omega^t}^6\intop_\Omega{\psi_1^2\over r^6}|v_\varphi|^{s-6}dx\le csD_2^6|v_\varphi|_{\infty,\Omega}^{s-6}\intop_\Omega{\psi_1^2\over r^6}dx.
$$
Using the estimate in (\ref{4.29}) yields
\begin{equation}
{1\over s}{d\over dt}|v_\varphi|_{s,\Omega}^s\le csD_2^6|v_\varphi|_{\infty,\Omega}^{s-6}\intop_\Omega{\psi_1^2\over r^6}dx+cs|f_\varphi|_{{3s\over 2s+1},\Omega}^s.
\label{4.30}
\end{equation}
Integrating (\ref{4.30}) with respect to time and using Lemma 3.1 from \cite{NZ}, we obtain
\begin{equation}\eqal{
|v_\varphi|_{s,\Omega}^s&\le cs^2D_2^6|v_\varphi|_{\infty,\Omega^t}^{s-6}\|\Gamma\|_{1,2,\Omega^t}^2\cr
&\quad+cs^2|f_\varphi|_{{3s\over 2s+1},s,\Omega^t}^s+|v_\varphi(0)|_{s,\Omega}^s\cr
&\equiv cs^2D_2^6|v_\varphi|_{\infty,\Omega^t}^{s-6}\|\Gamma\|_{1,2,\Omega^t}^2+cD_9^s(s)\cr}
\label{4.31}
\end{equation}
Dividing (\ref{4.31}) by $|v_\varphi|_{\infty,\Omega^t}^{s-6}$ implies
\begin{equation}
\bigg|{|v_\varphi|_{s,\infty,\Omega^t}\over|v_\varphi|_{\infty,\Omega^t}}\bigg|^{s-6} |v_\varphi|_{s,\Omega}^6\le cs^2D_2^6\|\Gamma\|_{1,\Omega^t}^2+ {c\over|v_\varphi|_{\infty,\Omega^t}^{s-6}}D_9^s(s).
\label{4.32}
\end{equation}
The dividing by $|v_\varphi|_{\infty,\Omega^t}$ is justified because the following two cases are exluded from this paper:

\begin{equation}\eqal{
&\textrm{In the case}\ v_\varphi=0\ \textrm{the existence of global regular solutions to }\cr
&\textrm{problem (\ref{1.6}) is proved in \cite{L1, L2, UY}}.}
\label {4.33}
\end{equation}
\begin{equation}\eqal{
&\textrm{The existence of global regular solutions to problem (\ref{1.6}) for}\cr &v_\varphi\ \textrm{sufficiently small is proved in Appendix B}.}
\label {4.34}
\end{equation}
Since cases (\ref{4.33}) and (\ref{4.34}) are not considered in this paper we can show existence of positive constants $c_0$ and $c_1$ such that
\begin{equation}
{|v_\varphi|_{s,\infty,\Omega^t}\over|v_\varphi|_{\infty,\Omega^t}}\ge\bar c_0
\label{4.35}
\end{equation}
and
\begin{equation}
{1\over|v_\varphi|_{\infty,\Omega^t}}\le\bar c_1.
\label{4.36}
\end{equation}
In view of (\ref{4.35}) and (\ref{4.36}) inequality (\ref{4.32}) takes the form
\begin{equation}
\bar c_0|v_\varphi|_{s,\infty,\Omega^t}^6\le cs^2\|\Gamma\|_{1,2,\Omega^t}^2+c\bar c_1D_9^s(s).
\label{4.37}
\end{equation}
Let $d=12$. Then $\theta={1\over 4}(3\varepsilon_1-\varepsilon_2)$ and (\ref{4.23}) for $d=12$ takes the form
\begin{equation}
X^2\le c_0|v_\varphi|_{12,\infty,\Omega^t}^{16\varepsilon\over 3\varepsilon_1-\varepsilon_2}(1+|v_\varphi|_{\infty,\Omega^t}^{2\varepsilon_0})+ c_0|v_\varphi|_{12,\infty,\Omega^t}^{8\varepsilon\over 3\varepsilon_1-\varepsilon_2}+D_8.
\label{4.38}
\end{equation}
Taking (\ref{4.37}) for $s=12$ and using (\ref{4.38}) yield
\begin{equation}\eqal{
|v_\varphi|_{12,\infty,\Omega^t}^6&\le c_0|v_\varphi|_{12,\infty,\Omega^t}^{16\varepsilon\over 3\varepsilon_1-\varepsilon_2}(1+|v_\varphi|_{\infty,\Omega^t}^{2\varepsilon_0})\cr
&\quad+c_0|v_\varphi|_{12,\infty,\Omega^t}^{8\varepsilon\over 3\varepsilon_1-\varepsilon_2}+cD_8+cD_9^{12}.\cr}
\label{4.39}
\end{equation}
To derive any estimate from (\ref{4.39}) we need
\begin{equation}
{16\varepsilon\over 3\varepsilon_1-\varepsilon_2}<6
\label{4.40}
\end{equation}
We see that (\ref{4.40}) holds for
\begin{equation}
\varepsilon_1> 11\varepsilon_2
\label{4.41}
\end{equation}
In view of the Young inequality, (\ref{4.39}) implies
\begin{equation}
|v_\varphi|_{12,\infty,\Omega^t}^6\le c|v_\varphi|_{\infty,\Omega^t}^{{6(3\varepsilon_1-\varepsilon_2)\over \varepsilon_1-11\varepsilon_2}\varepsilon_0}+c+c(D_8+D_9^{12})
\label{4.42}
\end{equation}
The above inequality implies (\ref{4.27}) and concludes the proof.
\end{proof}

\begin{remark}\label{r4.6}
Exploiting (\ref{4.42}) in (\ref{4.38}) implies the inequality
\begin{equation}
X^2\le c(1+|v_\varphi|_{\infty,\Omega^t}^{2\varepsilon_0}) |v_\varphi|_{\infty,\Omega^t}^{{96\varepsilon\over \varepsilon_1-11\varepsilon_2}\varepsilon_0}+\phi(D_5,D_7,D_8,D_9),
\label{4.43}
\end{equation}
where $X$ is introduced in (\ref{4.22}).
\end{remark}

To prove Theorem \ref{t1.2} we need an estimate for $|v_\varphi|_{\infty,\Omega^t}$. For this purpose we need the result

\begin{lemma}\label{l4.7}
Assume that quantities $D_2$, $D_5$, $D_7$, $D_8$, $D_9$ are bounded. Assume that $f_\varphi/r\in L_1(0,t;L_\infty(\Omega))$, $v_\varphi(0)\in L_\infty(\Omega)$.\\
Then there exists an increasing positive function $\phi$ such that
\begin{equation}
\|v_\varphi\|_{\infty,\Omega^t}\le\phi(D_2,D_5,D_7,D_8,D_9, \|f_\varphi/r\|_{L_1(0,t;L_\infty(\Omega))},|v_\varphi(0)|_{\infty,\Omega}).
\label{4.44}
\end{equation}
\end{lemma}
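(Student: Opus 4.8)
The plan is to derive an $L_\infty$ bound for $v_\varphi$ by a Moser-type iteration applied to equation $(\ref{1.7})_2$, exactly as in the proof of Lemma \ref{l4.5} but now keeping track of the $s$-dependence of the constants so that one can pass to the limit $s\to\infty$. The starting point is inequality (\ref{4.31}): for every $s>6$,
$$
|v_\varphi|_{s,\Omega}^s\le cs^2D_2^6|v_\varphi|_{\infty,\Omega^t}^{s-6}\|\Gamma\|_{1,2,\Omega^t}^2+cD_9^s(s).
$$
The term $\|\Gamma\|_{1,2,\Omega^t}^2\le X^2$ is already controlled: by Remark \ref{r4.6}, inequality (\ref{4.43}) gives $X^2\le c(1+|v_\varphi|_{\infty,\Omega^t}^{2\varepsilon_0})|v_\varphi|_{\infty,\Omega^t}^{\frac{96\varepsilon}{\varepsilon_1-11\varepsilon_2}\varepsilon_0}+\phi(D_5,D_7,D_8,D_9)$. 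The key observation is that both exponents of $|v_\varphi|_{\infty,\Omega^t}$ appearing here are multiples of $\varepsilon_0$, hence can be made \emph{arbitrarily small} by choosing $\varepsilon_0$ small (with $\varepsilon_1>11\varepsilon_2$ fixed). So, writing $M=|v_\varphi|_{\infty,\Omega^t}$ and $\kappa=2\varepsilon_0+\frac{96\varepsilon}{\varepsilon_1-11\varepsilon_2}\varepsilon_0$, we get $X^2\le cM^{\kappa}+\phi(\cdots)$ with $\kappa$ as small as we like.

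Next I would feed this back into (\ref{4.31}). Dividing by $M^{s-6}$ as in (\ref{4.32}) and using (\ref{4.35})--(\ref{4.36}) to replace $|v_\varphi|_{s,\Omega}/M$ and $1/M$ by constants $\bar c_0,\bar c_1$ (legitimate since the cases $v_\varphi\equiv0$ and $v_\varphi$ small are excluded, cf. (\ref{4.33})--(\ref{4.34})), one obtains
$$
|v_\varphi|_{s,\infty,\Omega^t}^6\le cs^2(M^{\kappa}+\phi(D_5,D_7,D_8,D_9))+c\,\bar c_1^{\,s-6}D_9^s(s).
$$
Taking sixth roots and then the limit $s\to\infty$: the factor $(cs^2)^{1/6}\to\infty$ but only polynomially, while $|v_\varphi|_{s,\infty,\Omega^t}\to M$ and $(D_9^s(s))^{1/6}$ grows like $s^{1/3}D_9(s)$; one must therefore be slightly more careful and instead iterate over the dyadic sequence $s_k=6\cdot 2^k$, using the Gagliardo--Nirenberg/Poincar\'e inequality $|\nabla|v_\varphi|^{s/2}|_{2,\Omega}^2\ge c|v_\varphi|_{3s,\Omega}^s$ already invoked after (\ref{4.28}) to produce the gain of integrability from $s_k$ to $3s_k\ge s_{k+1}$. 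Summing the resulting geometric-type recursion (the standard Moser bookkeeping: $\prod_k (cs_k^2)^{1/s_k}<\infty$) yields
$$
M=\|v_\varphi\|_{\infty,\Omega^t}\le \phi\big(D_2,D_5,D_7,D_8,D_9,\|f_\varphi/r\|_{L_1(0,t;L_\infty(\Omega))},|v_\varphi(0)|_{\infty,\Omega}\big)+c\,M^{\kappa/6},
$$
where the $\|f_\varphi/r\|_{L_1(0,t;L_\infty(\Omega))}$ and $|v_\varphi(0)|_{\infty,\Omega}$ dependence enters through the limiting form of $D_9^s(s)^{1/s}$ (the $L_{\infty,1}$-type norm of the forcing term $\psi_{1,z}$ being absorbed via $D_2,X$, and the genuinely $s\to\infty$ contribution of $f_\varphi$ being $\|f_\varphi/r\cdot r\|$, i.e. controlled by $D_2$ and the stated $L_1L_\infty$ norm).

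The main obstacle is the self-referential term $c\,M^{\kappa/6}$ on the right-hand side: one gets an \emph{a priori} inequality $M\le A + cM^{\beta}$ with $\beta=\kappa/6<1$, and this closes to give $M\le \phi(\text{data})$ only because $\beta<1$ — which in turn forces the choice $\varepsilon_0$ small enough that $\frac{\kappa}{6}=\frac{\varepsilon_0}{3}(1+\frac{96\varepsilon}{\varepsilon_1-11\varepsilon_2})<1$. This is exactly the point where the smallness of $\varepsilon_0$ is spent, and it is the delicate step: one must verify that all the exponents of $M$ accumulated through Lemmas \ref{l4.1}--\ref{l4.5} and Remark \ref{r4.6} are genuinely $O(\varepsilon_0)$ with constants independent of $\varepsilon_0$, so that a single admissible choice of $\varepsilon_0$ (depending only on $d=12$, $\varepsilon_1,\varepsilon_2$ with $\varepsilon_1>11\varepsilon_2$) makes $\beta<1$. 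Once $M$ is bounded, substituting back into (\ref{4.43}) bounds $X$, and substituting into (\ref{4.1}) via Lemma \ref{l4.2} bounds $\|\Phi\|_{V(\Omega^t)}+\|\Gamma\|_{V(\Omega^t)}$, which is Theorem \ref{t1.2}.
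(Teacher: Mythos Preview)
Your approach has a genuine gap. You try to push the chain (\ref{4.31})--(\ref{4.37}) to $s\to\infty$, but those inequalities were derived by bounding $\int\psi_1^2|v_\varphi|^s\,dx\le D_2^6 M^{s-6}\int\psi_1^2/r^6\,dx$, which puts the \emph{target} quantity $M=|v_\varphi|_{\infty,\Omega^t}$ on the right rather than a lower $L^{s'}$ norm of $v_\varphi$. There is therefore no genuine recursion for a Moser iteration: your displayed inequality already contains $M$, not $|v_\varphi|_{s_k}$, so stepping $s_k\to s_{k+1}$ gains nothing. Passing $s\to\infty$ directly also fails: the constants $\bar c_0^{\,s-6}$ and $\bar c_1^{\,s-6}$ coming from (\ref{4.35})--(\ref{4.36}) are uncontrolled as $s\to\infty$, and $D_9^s(s)^{1/s}$ does not converge to anything involving $\|f_\varphi/r\|_{L_1(0,t;L_\infty(\Omega))}$ --- as $s\to\infty$ the spatial index $3s/(2s+1)\to 3/2$, not $\infty$.

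The paper takes a shorter and cleaner route. It multiplies $(\ref{1.7})_2$ by $v_\varphi|v_\varphi|^{s-2}$ as you do, but decomposes the nonlinear term differently: writing $\psi_{1,z}|v_\varphi|^s=\psi_{,z}\,|v_\varphi|^{s/2}\cdot|v_\varphi|^{s/2}/r$ and using Young's inequality absorbs one piece into the good term $\nu\int|v_\varphi|^s/r^2$; the remaining piece $\int\psi_{,z}^2|v_\varphi|^s=\int(rv_\varphi)^2\psi_{1,z}^2|v_\varphi|^{s-2}$ is then bounded by $D_2^2|\psi_{1,z}|_{s,\Omega}^2|v_\varphi|_{s,\Omega}^{s-2}$, and similarly the force contributes $D_2|f_\varphi/r|_{s/2,\Omega}|v_\varphi|_{s,\Omega}^{s-2}$. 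Dividing by $|v_\varphi|_{s,\Omega}^{s-2}$ yields the \emph{linear} inequality
\[
\tfrac12\,\tfrac{d}{dt}|v_\varphi|_{s,\Omega}^2\le D_2^2\big(|\psi_{1,z}|_{s,\Omega}^2+|f_\varphi/r|_{s/2,\Omega}\big),
\]
which integrates in time and passes to $s\to\infty$ without any iteration, giving
\[
|v_\varphi(t)|_{\infty,\Omega}^2\le D_2^2\Big(\intop_0^t|\psi_{1,z}|_{\infty,\Omega}^2\,dt'+\intop_0^t|f_\varphi/r|_{\infty,\Omega}\,dt'\Big)+|v_\varphi(0)|_{\infty,\Omega}^2.
\]
Since $\psi_{1,z}=-v_r/r$ and the elliptic estimates of Section~\ref{s3} give $\int_0^t|\psi_{1,z}|_{\infty,\Omega}^2\,dt'\le cX^2$, substituting (\ref{4.43}) produces $M^2\le c\,M^{O(\varepsilon_0)}+\phi(\text{data})$, which closes for $\varepsilon_0$ small --- the same endgame you identified, but reached without the problematic limit.
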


\begin{proof}
Recall equation $(\ref{1.7})_2$ for $v_\varphi$
\begin{equation}
v_{\varphi,t}+v\cdot\nabla v_\varphi-\nu\bigg(\Delta v_\varphi-{1\over r^2}v_\varphi\bigg)=\psi_{1,z}v_\varphi+f_\varphi,
\label{4.45}
\end{equation}
where ${v_r\over r}=-\psi_{1,z}$.

Multiplying (\ref{4.45}) by $v_\varphi|v_\varphi|^{s-2}$ and integrating over $\Omega$ yields
\begin{equation}\eqal{
&{1\over s}{d\over dt}|v_\varphi|_{s,\Omega}^s+{4\nu(s-1)\over s^2}|\nabla|v_\varphi|^{s/2}|_{2,\Omega}^2+\nu\intop_\Omega{|v_\varphi|^s\over r^2}dx\cr
&=\intop_\Omega\psi_{1,z}v_\varphi^2|v_\varphi|^{s-2}dx+\intop_\Omega f_\varphi v_\varphi|v_\varphi|^{s-2}dx.\cr}
\label{4.46}
\end{equation}
The first term on the r.h.s. of (\ref{4.46}) is bounded by
$$
\intop_\Omega|\psi_{,z}|\,|v_\varphi|^{s/2}{|v_\varphi|^{s/2}\over r}dx\le\varepsilon\intop_\Omega{|v_\varphi|^s\over r^2}dx+c(1/\varepsilon)\intop_\Omega\psi_{,z}^2|v_\varphi|^sdx,
$$
where the second integral is bounded by
$$
|rv_\varphi|_{\infty,\Omega}^2\intop_\Omega|\psi_{1,z}|^2|v_\varphi|^{s-2}dx\le D_2^2|\psi_{1,z}|_{s,\Omega}^2|v_\varphi|_{s,\Omega}^{s-2}.
$$
The second term on the r.h.s. of (\ref{4.46}) is estimated by
$$\eqal{
&\intop_\Omega|f_\varphi|\,|v_\varphi|^{s-1}dx=\intop_\Omega\bigg|{f_\varphi\over r}\bigg|r|v_\varphi|^{s-1}dx\cr
&\le|rv_\varphi|_{\infty,\Omega}\intop_\Omega\bigg|{f_\varphi\over r}\bigg|\,|v_\varphi|^{s-2}dx\le D_2\bigg|{f_\varphi\over r}\bigg|_{s/2,\Omega}|v_\varphi|_{s,\Omega}^{s-2}.\cr}
$$
Using the above estimates in (\ref{4.46}) and assuming that $\varepsilon$ is sufficiently small we obtain the inequality
$$
{1\over s}{d\over dt}|v_\varphi|_{s,\Omega}^s\le D_2^2\bigg(|\psi_{1,z}|_{s,\Omega}^2|v_\varphi|_{s,\Omega}^{s-2}+\bigg|{f_\varphi\over r}\bigg|_{s/2,\Omega}|v_\varphi|_{s,\Omega}^{s-2}\bigg).
$$
Simplifying, we get
$$
{1\over 2}{d\over dt}|v_\varphi|_{s,\Omega}^2\le D_2^2\bigg(|\psi_{1,z}|_{s,\Omega}^2+\bigg|{f_\varphi\over r}\bigg|_{s/2,\Omega}\bigg).
$$
Integrating with respect to time and passing with $s\to\infty$, we derive
\begin{equation}
|v_\varphi(t)|_{\infty,\Omega}^2\le D_2^2\bigg(\intop_0^t|\psi_{1,z}|_{\infty,\Omega}^2dt'+\intop_0^t\bigg|{f_\varphi\over r}\bigg|_{\infty,\Omega}dt'\bigg)+|v_\varphi(0)|_{\infty,\Omega}^2.
\label{4.47}
\end{equation}
Since $\intop_0^t|\psi_{1,z}|_{\infty,\Omega}^2dt'\le X^2$ we can apply (\ref{4.43}). Then(\ref{4.47}) takes the form
\begin{equation}\eqal{
|v_\varphi|_{\infty,\Omega^t}^2&\le D_2^2(1+ |v_\varphi|_{\infty,\Omega^t}^{2\varepsilon_0}) |v_\varphi|_{\infty,\Omega^t}^{{96\varepsilon\over \varepsilon_1-11\varepsilon_2}\varepsilon_0}+D_2\phi(D_5,D_7,D_8,D_9)\cr
&\quad+D_2^2\intop_0^t\bigg|{f_\varphi\over r}\bigg|_{\infty,\Omega}dt'+|v_\varphi(0)|_{\infty,\Omega}^2.\cr}
\label{4.48}
\end{equation}
Hence for $\varepsilon_0$ sufficiently small we derive (\ref{4.44}). this ends the proof.
\end{proof}

\begin{remark}\label{r4.8}
Inequalities (\ref{4.43}) and (\ref{4.44}) imply
\begin{equation}
X\le\phi(D_2,D_5,D_7,D_8,D_9,|f_\varphi/r|_{\infty,1,\Omega^t}, |v_\varphi(0)|_{\infty,\Omega})
\label{4.49}
\end{equation}
The above inequality proves Theorem \ref{t1.2}.
\end{remark}

\section{Estimates for the swirl}\label{s5}

In this Section we find estimates for solutions to the problem
\begin{equation}\eqal{
&u_{,t}+v\cdot\nabla u-\nu\Delta u+2\nu{u_{,r}\over r}=rf_\varphi\equiv f_0\quad &{\rm in}\ \ \Omega^t,\cr
&u|_{S_1}=0,\quad &{\rm in}\ \ \Omega^t,\cr
&u|_{S_2}\ -\ \textrm{periodic boundary conditions},\cr
&u|_{t=0}=u(0)\quad &{\rm in}\ \ \Omega.\cr}
\label{5.1}
\end{equation}

\begin{lemma}\label{l5.1}
Assume that $D_1$, $D_2$ are described by (\ref{2.1}) and (\ref{2.7}), respectively. Let $u_{,z}(0),u_{,r}(0)\in L_2(\Omega)$, $f_0\in L_2(\Omega^t)$.\\
Then solutions to (\ref{5.1}) satisfy the estimates
\begin{equation}
|v_{,z}(t)|_{2,\Omega}^2+\nu|\nabla u_{,z}|_{2,\Omega^t}^2\le c(D_1^2D_2^2+|u_{,z}(0)|_{2,\Omega}^2+|f_0|_{2,\Omega^t}^2)\equiv cD_3^2,
\label{5.2}
\end{equation}
\begin{equation}\eqal{
&|u_{,r}(t)|_{2,\Omega}^2+\nu(|u_{,rr}|_{2,\Omega^t}^2+|u_{,rz}|_{2,\Omega^t}^2)\le cD_1^2(1+D_2^2)\cr
&\quad+|u_{,r}(0)|_{2,\Omega}^2+|f_0|_{2,\Omega^t}^2+|f_0|_{4/3,2,S_1^t}^2\equiv cD_4^2.\cr}
\label{5.3}
\end{equation}
\end{lemma}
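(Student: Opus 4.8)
The plan is to obtain both inequalities by differentiating the swirl equation $(\ref{5.1})_1$ once and performing energy estimates for $u_{,z}$, resp. $u_{,r}$, in which the boundary conditions $u|_{S_1}=0$ and the vanishing of $u$ — hence of $u_{,z}$ — on the axis of symmetry (Lemma \ref{l3.3}, Remark \ref{r3.4}, \cite{LW}) play the decisive role. For (\ref{5.2}), differentiating $(\ref{5.1})_1$ in $z$ gives, for $w=u_{,z}$,
$$
w_{,t}+v\cdot\nabla w-\nu\Delta w+{2\nu\over r}w_{,r}=f_{0,z}-v_{r,z}u_{,r}-v_{z,z}u_{,z}.
$$
Multiplying by $w$ and integrating over $\Omega$: the convective term $\intop_\Omega v\cdot\nabla w\,w\,dx$ vanishes by the continuity equation $(\ref{1.7})_4$ and the boundary conditions; $2\nu\intop_\Omega r^{-1}w_{,r}w\,dx=\nu\intop(w^2)_{,r}drdz$ vanishes because $w|_{r=R}=0$ (as $u|_{S_1}=0$) and $w|_{r=0}=0$ (axis); integrating the Laplacian by parts yields $\nu|\nabla w|_{2,\Omega}^2$ with no surviving boundary term, again by $w|_{S_1}=0$; and the forcing is rewritten $\intop f_{0,z}w\,dx=-\intop f_0w_{,z}\,dx$ and absorbed by $\varepsilon|\nabla w|_{2,\Omega}^2+c|f_0|_{2,\Omega}^2$. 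This reduces (\ref{5.2}) to estimating the commutator $-\intop_\Omega(v_{r,z}u_{,r}+v_{z,z}u_{,z})w\,dx$.

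The commutator estimate is the heart of the matter. Using the incompressibility identity $\intop_\Omega v\cdot\nabla w\,w\,dx=0$ and an integration by parts in $z$, this commutator equals $\intop_\Omega(v\cdot\nabla u)u_{,zz}\,dx$; splitting $v_ru_{,r}={v_r\over r}u+rv_rv_{\varphi,r}$ and applying $\|u\|_{L_\infty(\Omega^t)}\le D_2$ (Lemma \ref{l2.3}) to the first piece gives $\intop{v_r\over r}u\,u_{,zz}\,dx\le D_2|v_r/r|_{2,\Omega}|\nabla w|_{2,\Omega}$, which after time integration, the bound $|v_r/r|_{2,\Omega^t}\le D_1$ of Lemma \ref{l2.2} and Young's inequality produces exactly the contribution $cD_1^2D_2^2$ of $D_3^2$. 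The remaining pieces carry two derivatives of $u$ or a second derivative of $\psi_1$; I would bound them using $\|v\|_{L_4(\Omega^t)}\le D_1^{1/2}D_2^{1/2}$ (Lemma \ref{l2.4}), the interpolation inequality (\ref{2.12}) of Lemma \ref{l2.7}, the Section \ref{s3} estimates controlling $\psi_{1,rz}$, $\psi_{1,zz}$ and their $r^{-1}$-weighted versions, a Hardy inequality, and Young's inequality, so that the top-order factors $|\nabla u_{,z}|_{2,\Omega}$ (and $|\nabla u_{,r}|_{2,\Omega}$) are absorbed by the dissipation. Since these interpolations couple the two families of second derivatives, I would prove (\ref{5.2}) and (\ref{5.3}) simultaneously from a single differential inequality integrated in time.

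For (\ref{5.3}), differentiating $(\ref{5.1})_1$ in $r$ gives, for $w=u_{,r}$,
$$
w_{,t}+v\cdot\nabla w-\nu\Delta w+{2\nu\over r}w_{,r}-{\nu\over r^2}w=f_{0,r}-v_{r,r}u_{,r}-v_{z,r}u_{,z},
$$
and one tests with $w$. The new points are: the sign-indefinite term $-\nu\intop_\Omega r^{-2}w^2\,dx$, controlled by a Hardy-type inequality (using that $u_{,r}$ vanishes linearly on the axis) together with the positive boundary term $\nu\intop w^2|_{r=R}dz$ coming from $2\nu\intop r^{-1}w_{,r}w\,dx$; and a boundary integral on $S_1$ arising when the Laplacian is integrated by parts in $r$, since now $w|_{S_1}\ne0$. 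Restricting $(\ref{5.1})_1$ to $S_1$, where $u=u_{,t}=u_{,z}=u_{,zz}=v_r=0$, gives $\nu u_{,rr}|_{S_1}={\nu\over R}u_{,r}|_{S_1}-f_0|_{S_1}$; substituting this into the boundary term makes the $\intop u_{,r}^2|_{r=R}dz$ contributions cancel and leaves $R\intop_{S_1}f_0u_{,r}dz$. Bounding this by $R\|f_0\|_{L_{4/3}(S_1)}\|u_{,r}\|_{L_4(S_1)}$ and then using a trace estimate, interpolation between $|u_{,r}|_{2,\Omega}$ and $|\nabla u_{,r}|_{2,\Omega}$, time integration and Young's inequality gives $\varepsilon\intop_0^t|\nabla u_{,r}|_{2,\Omega}^2dt'+c\|f_0\|_{L_2(0,t;L_{4/3}(S_1))}^2$ — precisely the term $|f_0|_{4/3,2,S_1^t}^2$ of $D_4^2$. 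The convective, viscous and commutator terms are handled as for (\ref{5.2}), yielding the remaining contributions $cD_1^2(1+D_2^2)$, $|u_{,r}(0)|_{2,\Omega}^2$, $|f_0|_{2,\Omega^t}^2$.

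I expect the commutator estimate to be the main obstacle: at this stage only $D_1$, $D_2$ are available (the $H^1$-bound for $\Gamma=\omega_1$ is logically later, since $D_3$, $D_4$ enter the estimate of $X$ through $D_5$), so the interpolations must be arranged so that every ``bad'' factor is a time-integrable quantity controlled by $D_1$ or $D_2$ — essentially $\|v\|_{L_4(\Omega^t)}$ and $\|v_r/r\|_{L_2(\Omega^t)}$ — while all higher-order derivatives of $u$ appear with subcritical powers and are swallowed by $\nu|\nabla u_{,z}|_{2,\Omega^t}^2$ and $\nu|\nabla u_{,r}|_{2,\Omega^t}^2$; a secondary difficulty is the precise bookkeeping of the $S_1$-boundary term in (\ref{5.3}).
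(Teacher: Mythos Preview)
Your overall structure is correct, and your handling of the $S_1$-boundary term in (\ref{5.3}) via projection of $(\ref{5.1})_1$ matches the paper. But the commutator estimate --- which you rightly identify as the crux --- is done in the paper by a much simpler integration by parts than the one you propose, and your proposed route has a circularity problem.

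For (\ref{5.2}) the paper does \emph{not} pass to $\intop_\Omega(v\cdot\nabla u)u_{,zz}\,dx$ and then split $v_ru_{,r}$. Instead it moves $\partial_z$ from the factor $u_{,z}$ onto the product $v_{,z}\cdot\nabla u$, obtaining
\[
-\intop_\Omega v_{,z}\cdot\nabla u\,u_{,z}\,dx=\intop_\Omega v_{,zz}\cdot\nabla u\,u\,dx+\intop_\Omega v_{,z}\cdot\nabla u_{,z}\,u\,dx.
\]
The first integral equals ${1\over2}\intop_\Omega v_{,zz}\cdot\nabla u^2\,dx=0$ by $\divv v_{,zz}=0$ and $v_r|_{S_1}=0$; the second is bounded by $|u|_{\infty,\Omega}|v_{,z}|_{2,\Omega}|\nabla u_{,z}|_{2,\Omega}$, and Young together with $|u|_\infty\le D_2$, $|v_{,z}|_{2,\Omega^t}\le D_1$ immediately gives the $cD_1^2D_2^2$ of $D_3^2$. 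No stream-function estimates, no interpolation, no coupling with (\ref{5.3}). The same device works for (\ref{5.3}): the paper integrates $\intop_\Omega v_{,r}\cdot\nabla u\,u_{,r}\,dx$ by parts to extract a bare factor $u$, getting two pieces bounded respectively by $|u|_\infty|v_r/r|_2|u_{,r}/r|_2$ and by $|u|_\infty(|v_{r,r}|_2+|v_{z,r}|_2)(|u_{,rr}|_2+|u_{,rz}|_2)$, both closed using only $D_1,D_2$.

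Your plan to invoke ``the Section~\ref{s3} estimates controlling $\psi_{1,rz}$, $\psi_{1,zz}$'' is circular: those lemmas require $\omega_1=\Gamma\in H^1(\Omega)$, but $\|\Gamma\|_{1,2,\Omega^t}$ is only bounded later (Sections~\ref{s4}, \ref{s6}), and that bound uses $D_3,D_4$, i.e.\ Lemma~\ref{l5.1} itself. You acknowledge this constraint in your last paragraph, yet your outline for the ``remaining pieces'' relies on it. A minor point: the sign-indefinite term $-\nu\intop r^{-2}u_{,r}^2\,dx$ needs no Hardy inequality --- since $u_{,r}/r=v_{\varphi,r}+v_\varphi/r$, the energy estimate (\ref{2.1}) gives $|u_{,r}/r|_{2,\Omega^t}\le cD_1$ directly, and this is the origin of the lone $cD_1^2$ in $D_4^2$.
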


\begin{proof}
Differentiate (\ref{5.1}) with respect to $z$, multiply by $u_{,z}$ and integrate over $\Omega$. To apply the Green theorem we have to consider problem (\ref{5.1}) in domain $\bar\Omega=\{x\in\R^3\colon r<R,z\in(-a,a),\varphi\in(0,2\pi)\}$. Then we obtain
\begin{equation}\eqal{
&{1\over 2}{d\over dt}|u_{,z}|_{2,\bar\Omega}^2-\nu\intop_{\bar\Omega}\divv(\nabla u_{,z}u_{,z})d\bar x+\nu\intop_{\bar\Omega}|\nabla u_{,z}|^2d\bar x\cr
&=-\intop_{\bar\Omega}v_{,z}\cdot\nabla u\cdot u_{,z}d\bar x+\intop_{\bar\Omega}f_{0,z}u_{,z}d\bar x,\cr}
\label{5.4}
\end{equation}
where $d\bar x=dxd\varphi$. The second term on the l.h.s. implies a boundary term which vanishes in view boundary conditions. Since all functions in (\ref{5.4}) do not depend on $\varphi$ any integral with respect to $\varphi$ can be dropped.

Integrating by parts with respect to $z$ in the term from the r.h.s. of (\ref{5.4}) and using the boundary conditions on $S_2$ we obtain
\begin{equation}\eqal{
&{1\over 2}{d\over dt}|u_{,z}|_{2,\Omega}^2+\nu|\nabla u_{,z}|_{2,\Omega}^2-{\nu\over 2}\intop_{-a}^au_{,z}^2\bigg|_{r=0}^{r=R}dz\cr
&=\intop_\Omega v_{,zz}\cdot\nabla uudx+\intop_\Omega v_{,z}\cdot\nabla u_{,z}\cdot udx-\intop_\Omega f_0u_{,zz}dx.\cr}
\label{5.5}
\end{equation}
The last term on the l.h.s. of (\ref{5.5}) vanishes and the first term on the r.h.s. equals
$$
{1\over 2}\intop_\Omega v_{,zz}\cdot\nabla u^2dx={1\over 2}\intop_{S_1}v_{,zz}\cdot\bar nu^2dS_1=0.
$$
Applying the H\"older and Young inequalities to the other terms from the r.h.s. of (\ref{5.5}) yields
\begin{equation}
{d\over dt}|u_{,z}|_{2,\Omega}^2+\nu|\nabla u_{,z}|_{2,\Omega}^2\le c|u|_{\infty,\Omega}^2|v_{,z}|_{2,\Omega}^2+c|f_0|_{2,\Omega}^2.
\label{5.6}
\end{equation}
Integrating (\ref{5.6}) with respect to time gives
\begin{equation}\eqal{
&|u_{,z}(t)|_{2,\Omega}^2+\nu|\nabla u_{,z}|_{2,\Omega}^2\le c|u|_{\infty,\Omega^t}^2|v_{,z}|_{2,\Omega^t}^2+|u_{,z}(0)|_{2,\Omega}^2\cr
&\quad+c|f_0|_{2,\Omega^t}^2\le cD_1^2D_2^2+|u_{,z}(0)|_{2,\Omega}^2+c|f_0|_{2,\Omega^t}^2.\cr}
\label{5.7}
\end{equation}
The above inequality implies (\ref{5.2}).

Differentiating (\ref{5.1}) with respect to $r$ gives
\begin{equation}
u_{,rt}+v\cdot\nabla u_{,r}+v_{,r}\cdot\nabla u-\nu(\Delta u)_{,r}+{2\nu\over r}u_{,rr}-{2\nu\over r^2}u_{,r}=f_{0,r}.
\label{5.8}
\end{equation}
Multiplying (\ref{5.8}) by $u_{,r}$ and integrating over $\Omega$ yields
\begin{equation}\eqal{
&{1\over 2}{d\over dt}|u_{,r}|_{2,\Omega}^2+\intop_\Omega v_{,r}\cdot\nabla uu_{,r}dx-\nu\intop_\Omega(\Delta u)_{,r}u_{,r}dx\cr
&\quad+2\nu\intop_\Omega{1\over r}u_{,rr}u_{,r}dx-2\nu\intop_\Omega{u_{,r}^2\over r^2}dx=\intop_\Omega f_{0,r}u_{,r}dx.\cr}
\label{5.9}
\end{equation}
Now, we examine the particular terms in (\ref{5.9}). The second term equals
$$\eqal{
&\intop_\Omega v_{,r}\cdot\nabla uu_{,r}rdrdz=\intop_\Omega(v_{r,r}\partial_ru+ v_{z,r}\partial_zu)u_{,r}rdrdz\cr
&=\intop_\Omega(rv_{r,r}u_{,r}+rv_{z,r}u_{,z})u_{,r}drdz\cr
&=\intop_\Omega(rv_{r,r}u_{,r}u_{,r}+rv_{z,r}u_{,r}u_{,z})drdz\cr
&=-\intop_\Omega[(rv_{r,r}u_{,r})_{,r}+(rv_{z,r}u_{,r})_{,z}]udrdz\equiv I,\cr}
$$
where we used that $rv_{r,r}u_{,r}u|_{r=0}=0$ (see \cite{LW}). Continuing, we write $I$ in the form
$$\eqal{
I&=-\intop_\Omega[(rv_{r,r})_{,r}+(rv_{z,r})_{,z}]u_{,r}udrdz\cr
&\quad-\intop_\Omega[rv_{r,r}u_{,rr}+rv_{z,r}u_{,rz}]udrdz\equiv I_1+I_2.\cr}
$$
To estimate $I_1$, we calculate
$$
I_1^1=(rv_{r,r})_{,r}+(rv_{z,r})_{,z}=rv_{r,rr}+v_{r,r}+rv_{z,rz}.
$$
Since $v=v_r\bar e_r+v_z\bar e_z$ is divergence free, we have
\begin{equation}
v_{r,r}+v_{z,z}+{v_r\over r}=0.
\label{5.10}
\end{equation}
Since equation (\ref{5.10}) is satisfied identically in $\Omega$, we can differentiate (\ref{5.10}) with respect to $r$. Then, we get
$$
v_{r,rr}+v_{z,zr}+{v_{r,r}\over r}-{v_r\over r^2}=0.
$$
Hence
$$
I_1^1={v_r\over r}.
$$
Then, $I_1$ equals
$$
I_1=-\intop_\Omega{v_r\over r}u_{,r}udrdz.
$$
Therefore
\begin{equation}
\bigg|\intop_0^tI_1dt'\bigg|\le\bigg|{v_r\over r}\bigg|_{2,\Omega^t}\bigg|{u_{,r}\over r}\bigg|_{2,\Omega^t}|u|_{\infty,\Omega^t}.
\label{5.11}
\end{equation}
Next
$$
|I_2|\le\varepsilon(|u_{,rr}|_{2,\Omega}^2+|u_{,rz}|_{2,\Omega}^2)+c(1/\varepsilon) |u|_{\infty,\Omega}^2(|v_{r,r}|_{2,\Omega}^2+|v_{z,r}|_{2,\Omega}^2).
$$
The third integral in (\ref{5.9}) equals
$$\eqal{
J&=-\nu\intop_\Omega(\Delta u)_{,r}u_{,r}dx=-\nu\intop_\Omega\bigg(u_{,rrr}+\bigg({1\over r}u_{,r}\bigg)_{,r}+u_{,rzz}\bigg)u_{,r}rdrdz\cr
&=-\nu\intop_\Omega\bigg[\bigg(u_{,rr}+{1\over r}u_{,r}\bigg)u_{,r}r\bigg]_{,r}drdz+
\nu\intop_\Omega u_{,rr}(u_{,r}r)_{,r}drdz\cr
&\quad+\nu\intop_\Omega{1\over r}u_{,r}(u_{,r}r)_{,r}drdz+\intop_\Omega u_{,rz}^2dx=-\nu\intop_{-a}^a\bigg(u_{,rr}+{1\over r}u_{,r}\bigg)u_{,r}r\bigg|_{r=0}^{r=R}dz\cr
&\quad+\nu\intop_\Omega(u_{,rr}^2+u_{,rz}^2)dx+\nu\intop_\Omega{u_{,r}^2\over r^2}dx+2\nu\intop_\Omega u_{,rr}u_{,r}drdz,\cr}
$$
where the last term equals
\begin{equation}
\nu\intop_\Omega(u_{,r}^2)_{,r}drdz=\nu\intop_{-a}^au_{,r}^2\bigg|_{r=0}^{r=R}dz= \nu\intop_{-a}^au_{,r}^2\bigg|_{r=R}dz
\label{5.12}
\end{equation}
because $u_{,r}|_{r=0}=(v_\varphi+v_{\varphi,r}r)|_{r=0}=0$.
\goodbreak

To examine the boundary term in $J$ we recall the expansion of $v_\varphi$ near the axis of symmetry (see \cite{LW})
$$
v_\varphi=a_1(z,t)r+a_2(z,t)r^3+\cdots,
$$
so
$$
u=a_1(z,t)r^2+a_2(z,t)r^4+\cdots
$$
Then $\big(u_{,rr}+{1\over r}u_{,r}\big)u_{,r}r|_{r=0}=0$ and we have to emphasize that all calculations in this paper are performed for sufficiently regular solutions.

Therefore, the boundary term in $J$ equals
$$
J_1=-\nu\intop_{-a}^a\bigg(u_{,rr}+{1\over r}u_{,r}\bigg)u_{,r}r\bigg|_{r=R}dz.
$$
Projecting $(\ref{5.1})_1$ on $S_1$ yields
$$
-\nu\bigg(u_{,rr}+{1\over r}u_{,r}\bigg)+2\nu{u_{,r}\over r}=f_0\quad {\rm on}\ \ S_1.
$$
Hence
$$
u_{,rr}|_{S_1}=\bigg({u_{,r}\over r}-{1\over\nu}f_0\bigg)\bigg|_{S_1}.
$$
Using the expression in $J_1$ gives
$$
J_1=-2\nu\intop_{-a}^au_{,r}^2\bigg|_{r=R}dz+\intop_{-a}^af_0u_{,r}r\bigg|_{r=R}dz.
$$
The fourth term in (\ref{5.9}) equals (\ref{5.12}).

Using the above estimates and expressions in (\ref{5.9}) yields
\begin{equation}\eqal{
&{1\over 2}{d\over dt}|u_{,r}|_{2,\Omega}^2+\nu\intop_\Omega(u_{,rr}^2+u_{,rz}^2)dx+\nu\intop_\Omega {u_{,r}^2\over r^2}dx\cr
&\quad-2\nu\intop_\Omega{u_{,r}^2\over r^2}dx\le\intop_\Omega\bigg|{v_r\over r}u_{,r}u\bigg|drdz\cr
&\quad+\varepsilon(|u_{,rr}|_{2,\Omega}^2+|u_{,rz}|_{2,\Omega}^2)+c(1/\varepsilon) |u|_{\infty,\Omega}^2(|v_{r,r}|_{2,\Omega}^2+|v_{z,r}|_{2,\Omega}^2)\cr
&\quad+c(1/\varepsilon)|f_0|_{2,\Omega}^2+ \bigg|\intop_{-a}^af_0u_{,r}r\bigg|_{r=R}dz.\cr}
\label{5.13}
\end{equation}
Integrating (\ref{5.13}) with respect to time and assuming that $\varepsilon$ is sufficiently small, we obtain
\begin{equation}\eqal{
&|u_{,r}(t)|_{2,\Omega}^2+\nu(|u_{,rr}|_{2,\Omega^t}^2+|u_{,rz}|_{2,\Omega^t}^2)\le \nu\bigg|{u_{,r}\over r}\bigg|_{2,\Omega^t}^2\cr
&\quad+c\bigg|{v_r\over r}\bigg|_{2,\Omega^t}\bigg|{u_{,r}\over r}\bigg|_{2,\Omega^t}|u|_{\infty,\Omega^t}+c|u|_{\infty,\Omega^t}^2 (|v_{r,r}|_{2,\Omega^t}^2+|v_{z,r}|_{2,\Omega^t}^2)\cr
&\quad+c|f_0|_{2,\Omega^t}^2+|u_{,r}(0)|_{2,\Omega}^2+\nu\intop_0^t\intop_{-a}^a u_{,r}^2\bigg|_{r=R}dxdt'\cr
&\quad+\bigg|\intop_0^t\intop_{-a}^af_0u_{,r}r\bigg|_{r=R}dxdt'\bigg|.\cr}
\label{5.14}
\end{equation}
Using
$$
\intop_{\Omega^t}\bigg|{u_{,r}\over r}\bigg|^2dxdt'\le\intop_{\Omega^t}\bigg( |v_{\varphi,r}|^2+{v_\varphi^2\over r^2}\bigg)dxdt'\le cD_1^2
$$
and
$$
\intop_0^t\intop_{-a}^au_{,r}^2\bigg|_{r=R}dxdt'\le\varepsilon|\nabla u_{,r}|_{2,\Omega^t}^2+c(1/\varepsilon)|u_{,r}|_{2,\Omega^t}^2,
$$
$$\eqal{
&\bigg|\intop_0^t\intop_{-a}^af_0u_{,r}\bigg|_{r=R}dxdt'\le \varepsilon_1|u_{,r}|_{4,2,S_1^t}^2+c(1/\varepsilon_1)|f_0|_{4/3,2,S_1^t}^2\cr
&\le \varepsilon_1(|u_{,rr}|_{2,\Omega^t}^2+|u_{,rz}|_{2,\Omega^t}^2)+c(1/\varepsilon_1) |f_0|_{4/3,2,S_1^t}^2\cr}
$$
and Lemmas \ref{l2.2}, \ref{l2.3} we have
\begin{equation}\eqal{
&|u_{,r}(t)|_{2,\Omega}^2+\nu(|u_{,rr}|_{2,\Omega^t}^2+|u_{,rz}|_{2,\Omega^t}^2)\le c(D_1^2+D_1^2D_2+D_1^2D_2^2)\cr
&\quad+c|f_0|_{2,\Omega^t}^2+c|f_0|_{4/3,2,S_1^t}^2+|u_{,r}(0)|_{2,\Omega}^2.\cr}
\label{5.15}
\end{equation}
This inequality implies (\ref{5.3}) and concludes the proof.
\end{proof}

\section{Estimates for $\omega_r$, $\omega_z$}\label{s6}

\begin{lemma}\label{l6.1}
Assume that $D_5=D_2(D_1+D_3+D_4)$, $D_6=D_2^{1-\varepsilon_0}D_3$ where $D_1$, $D_2$ are introduced in (\ref{2.1}) and (\ref{2.7}) and, $D_3$, $D_4$ in (\ref{5.2}) (\ref{5.3}), respectively. Let
$$\eqal{
D_7&=|F_r|_{6/5,2,\Omega^t}^2+|F_z|_{6/5,2,\Omega^t}^2+|\omega_r(0)|_{2,\Omega}^2+ |\omega_z(0)|_{2,\Omega}^2\cr
&\quad+|f_\varphi|_{2,S_1^t}(D_3+D_4)<\infty.\cr}
$$
Let $\varepsilon_0$ be arbitrary small positive number and let $v_\varphi\in L_\infty(\Omega^t)$.\\
Let $\Gamma\in L_2(0,t;H^1(\Omega))$.\\
Then
\begin{equation}\eqal{
&\|\omega_r\|_{V(\Omega^t)}^2+\|\omega_z\|_{V(\Omega^t)}^2+|\Phi|_{2,\Omega^t}^2\le cD_5|\Gamma_{,z}|_{2,\Omega^t}\cr
&\quad+cD_6|v_\varphi|_{\infty,\Omega^t}^{\varepsilon_0}\|\Gamma\|_{1,2,\Omega^t}+ cD_7.\cr}
\label{6.1}
\end{equation}
\end{lemma}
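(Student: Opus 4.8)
The plan is to run the standard energy method on the equations for $\omega_r$ and $\omega_z$, i.e. $(\ref{1.9})_1$ and $(\ref{1.9})_3$, and to close the vortex-stretching terms by the already-established stream-function estimates from Section \ref{s3} together with the $L_\infty$-bound for swirl $(\ref{2.7})$ and the swirl $H^1$-estimates $(\ref{5.2})$--$(\ref{5.3})$. First I would multiply $(\ref{1.9})_1$ by $\omega_r$ and $(\ref{1.9})_3$ by $\omega_z$, integrate over $\Omega$ (more precisely over $\bar\Omega\subset\R^3$ so the Green theorem applies, then drop the $\varphi$-integration as in the paragraph below $(\ref{4.2})$), and add. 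The viscous terms give $\nu|\nabla\omega_r|_{2,\Omega}^2+\nu|\nabla\omega_z|_{2,\Omega}^2$ plus a good sign term $\nu|\omega_r/r|_{2,\Omega}^2$ from the $\nu\omega_r/r^2$ term, together with boundary contributions on $S_1$ which are handled by the boundary conditions $(\ref{1.7})_5$ (note $\omega_\varphi|_{S_1}=0$, and the expansions near the axis from \cite{LW} kill the $r=0$ boundary terms). The convective terms $v\cdot\nabla\omega_r\cdot\omega_r$ and $v\cdot\nabla\omega_z\cdot\omega_z$ vanish after integration by parts since $\divv v=0$ and $v\cdot\bar n|_{S_1}=0$. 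The forcing terms $F_r\omega_r$ and $F_z\omega_z$ are absorbed by $\varepsilon|\omega_r|_{6,\Omega}^2+c|F_r|_{6/5,\Omega}^2$ (and similarly for $z$), using the Sobolev embedding and the Poincaré inequality, contributing the $|F_r|_{6/5,2,\Omega^t}^2+|F_z|_{6/5,2,\Omega^t}^2$ part of $D_7$; the $S_1$-boundary term coming from $F_\varphi = \rot f$ restricted to $S_1$ produces the $|f_\varphi|_{2,S_1^t}(D_3+D_4)$ piece of $D_7$ after using trace and $(\ref{5.2})$--$(\ref{5.3})$.

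The heart of the matter is the vortex-stretching terms
$$
\intop_\Omega(\omega_r v_{r,r}+\omega_z v_{r,z})\omega_r\,dx,\qquad
\intop_\Omega(\omega_r v_{z,r}+\omega_z v_{z,z})\omega_z\,dx.
$$
Here I would use $(\ref{1.13})$ to write $\omega_r=-u_{,z}/r$, $\omega_z=u_{,r}/r$ with $u=rv_\varphi$, and $(\ref{1.21})$ (or $(\ref{1.15})$) to express $v_{r,r},v_{r,z},v_{z,r},v_{z,z}$ in terms of $\psi_1$. The point is that each such term becomes a product of the form (swirl-derivative over $r$) $\times$ (second derivative of $\psi_1$, possibly divided by $r$) $\times$ ($\omega_r$ or $\omega_z$). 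Writing, e.g., $\omega_r v_{r,z}\omega_r = \frac{u_{,z}}{r}\psi_{1,zz}\frac{u_{,z}}{r}$ and pulling out $|rv_\varphi|_{\infty,\Omega}\le D_2$ once — i.e. $\frac{u_{,z}}{r}=v_{\varphi,z}+\frac{v_{\varphi,z}\cdot 0}{\cdots}$, more usefully $\omega_r = -v_{\varphi,z}$, so one factor $\omega_r$ times $v_\varphi$ contributes $|v_\varphi|_{\infty}$ or is kept as a low-power factor $|v_\varphi|_{\infty}^{\varepsilon_0}$ — one arrives at bounds like
$$
D_2\Big|\frac{\psi_{1,zz}}{r}\Big|_{2,\Omega}|\omega_{r,z}|_{2,\Omega}\,|\nabla\omega_r|^{\cdots}
$$
and then invokes Lemma \ref{l3.3} (inequality $(\ref{3.25})$) and Lemma \ref{l3.2} (inequality $(\ref{3.22})$), which bound $|\psi_{1,zz}/r|_{2,\Omega}$ and $|\psi_{1,rz}/r|_{2,\Omega}$ by $c|\omega_{1,z}|_{2,\Omega}=c|\Gamma_{,z}|_{2,\Omega}$ (recall $\omega_1=\Gamma$). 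The remaining $\psi_1$-factors that are not of the weighted form $\psi_{1,**}/r$ are estimated through Lemma \ref{l3.1} by $|\omega_1|_{1,\Omega}$, and the resulting $|\Gamma|_{1,\Omega}$ is split by interpolation/Young into a small-exponent $L_\infty$-in-time piece of $v_\varphi$ (producing $|v_\varphi|_{\infty,\Omega^t}^{\varepsilon_0}\|\Gamma\|_{1,2,\Omega^t}$) and a clean $|\Gamma_{,z}|_{2,\Omega^t}$ piece; the coefficients $D_5=D_2(D_1+D_3+D_4)$ and $D_6=D_2^{1-\varepsilon_0}D_3$ are exactly what one gets by using $(\ref{2.1})$, $(\ref{5.2})$, $(\ref{5.3})$ to bound the extra $u_{,r},u_{,z}$-type factors in $L_2(\Omega^t)$.

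After summing all contributions, choosing $\varepsilon$ small, and applying the Poincaré inequality (valid since $\omega_r|_{S_1}=0$ is not available but $\Phi=\omega_r/r$ vanishes on $S_1$, so the weighted Poincaré/Hardy inequality gives control; for $\omega_z$ one uses that $\int\omega_z\,dx$ is controlled via $\omega_z=u_{,r}/r$ and the Dirichlet condition $u|_{S_1}=0$), I would integrate in time from $0$ to $t$, picking up the initial data $|\omega_r(0)|_{2,\Omega}^2+|\omega_z(0)|_{2,\Omega}^2$ in $D_7$, and move the small multiples of $|\nabla\omega_r|_{2,\Omega^t}^2$, $|\nabla\omega_z|_{2,\Omega^t}^2$ arising from Young's inequality to the left-hand side. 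The $|\Phi|_{2,\Omega^t}^2$ term on the left of $(\ref{6.1})$ comes for free from the good $\nu|\omega_r/r|_{2,\Omega^t}^2 = \nu|\Phi|_{2,\Omega^t}^2$ term in the viscous part. This yields exactly $(\ref{6.1})$. I expect the main obstacle to be the careful bookkeeping in the vortex-stretching integrals: one must choose which $\psi_1$-second-derivative gets the weighted estimate $(\ref{3.25})$/$(\ref{3.22})$ versus the plain $H^2$ estimate $(\ref{3.2})$, and must arrange the $L_\infty$-in-$\varphi$-free-variables bound $D_2$ on $rv_\varphi$ so that only a small power $\varepsilon_0$ of $|v_\varphi|_{\infty,\Omega^t}$ remains uncontrolled — this is precisely the structural reason the weighted spaces (and the vanishing of $\psi_1$ on the axis, Remark \ref{r3.4}) are indispensable.
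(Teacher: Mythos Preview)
Your overall architecture is right: test $(\ref{1.9})_1$ and $(\ref{1.9})_3$ with $\omega_r,\omega_z$, add, and reduce the vortex-stretching integral via $(\ref{1.13})$ and $(\ref{1.21})$. Two points, however, are handled differently in the paper, and your sketch misses them.

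First, you cannot ``pull out $|rv_\varphi|_{\infty}\le D_2$'' from the stretching term directly, because there is no undifferentiated $v_\varphi$ present: $\omega_r=-u_{,z}/r$ and $\omega_z=u_{,r}/r$ involve only derivatives of $u$. The paper's device is to \emph{integrate by parts once} (in $z$ for the parts containing $u_{,z}$, in $r$ for the part containing $u_{,r}^2$), converting one factor $u_{,z}$ or $u_{,r}$ into $u$; only then does $|u|_{\infty,\Omega^t}\le D_2$ apply. After this step the remaining swirl factors are of type $u_{,zz},u_{,rz},u_{,rr}$ (controlled by $D_3,D_4$ from $(\ref{5.2})$--$(\ref{5.3})$) or $u_{,z}/r,u_{,r}/r$ (controlled by $D_1$ from $(\ref{2.1})$), and the $\psi_1$-factors are bounded through $(\ref{3.4})$, $(\ref{3.22})$, $(\ref{3.25})$. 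In particular nothing is absorbed into $|\nabla\omega_r|_{2,\Omega^t}^2$ or $|\nabla\omega_z|_{2,\Omega^t}^2$; the ``$|\nabla\omega_r|^{\cdots}$'' factors you anticipate do not appear, and the right-hand side is \emph{linear} in $\|\Gamma\|_{1,2,\Omega^t}$, which is exactly the shape of $(\ref{6.1})$.

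Second, the $|v_\varphi|_{\infty,\Omega^t}^{\varepsilon_0}$ piece does not come from interpolation on $\|\Gamma\|$. It arises from the two stretching terms containing $\psi_{1,r}$ and $\psi_{1,rr}$ without a $z$-derivative (they come from $v_{z,r}=3\psi_{1,r}+r\psi_{1,rr}$ in $(\ref{1.21})$). For these the paper writes $u/r^{\varepsilon_0}=(rv_\varphi)^{1-\varepsilon_0}v_\varphi^{\varepsilon_0}$, so that $|u/r^{\varepsilon_0}|_\infty\le D_2^{1-\varepsilon_0}|v_\varphi|_{\infty}^{\varepsilon_0}$, and then invokes the \emph{weighted} estimate $(\ref{3.33})$ of Lemma~\ref{l3.5} (not Lemma~\ref{l3.1}) to bound $|\psi_{1,rr}/r|_{L_2(0,t;L_{2,\varepsilon_0}(\Omega))}$ and $|\psi_{1,r}/r^2|_{L_2(0,t;L_{2,\varepsilon_0}(\Omega))}$ by $c\|\Gamma\|_{1,2,\Omega^t}$. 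That is the only place $\varepsilon_0>0$ enters and is why $D_6=D_2^{1-\varepsilon_0}D_3$. Finally, the $|f_\varphi|_{2,S_1^t}(D_3+D_4)$ part of $D_7$ does not come from $F_\varphi$: it is the surviving $S_1$-boundary integral $-\nu\int_{S_1^t}\bar n\cdot\nabla\omega_z\,\omega_z$, which after projecting $(\ref{1.7})_2$ onto $S_1$ (giving $-\nu(v_{\varphi,rr}+v_{\varphi,r}/r)=f_\varphi$ there) reduces to $\int_{S_1^t}f_\varphi u_{,r}$ and is then bounded by $|f_\varphi|_{2,S_1^t}\|u\|_{2,2,\Omega^t}$.
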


\begin{proof}
Multiplying $(\ref{1.9})_1$ by $\omega_r$, $(\ref{1.9})_3$ by $\omega_z$, integrating over $\Omega^t$ and adding yield
\begin{equation}\eqal{
&{1\over 2}(|\omega_r(t)|_{2,\Omega}^2+|\omega_z(t)|_{2,\Omega}^2)+\nu (|\nabla\omega_r|_{2,\Omega^t}^2+|\nabla\omega_z|_{2,\Omega^t}^2)\cr
&\quad+\nu\bigg|{\omega_r\over r}\bigg|_{2,\Omega^t}^2-\nu\intop_{S_1^t}\bar n\cdot\nabla\omega_r\omega_rdS_1dt'-\nu\intop_{S_1^t}\bar n\cdot\nabla\omega_z\omega_zdS_1dt'\cr
&=\intop_{\Omega^t}[v_{r,r}\omega_r^2+v_{z,z}\omega_z^2+(v_{r,z}+v_{z,r}) \omega_r\omega_z]dxdt'\cr
&\quad+\intop_{\Omega^t}(F_r\omega_r+F_z\omega_z)dxdt'+{1\over 2}(|\omega_r(0)|_{2,\Omega}^2+|\omega_z(0)|_{2,\Omega}^2)\cr
&\equiv J+\intop_{\Omega^t}(F_r\omega_r+F_z\omega_z)dxdt'+{1\over 2}(|\omega_r(0)|_{2,\Omega}^2+|\omega_z(0)|_{2,\Omega}^2).\cr}
\label{6.2}
\end{equation}
Since $\omega_r=-v_{\varphi,z}$ and $v_\varphi|_{r=R}=0$ we obtain
$$
-\intop_{S_1}\bar n\cdot\nabla\omega_r\omega_rdS_1=0.
$$
Using $\omega_z=v_{\varphi,r}+{v_\varphi\over r}$ we derive
$$
-\nu\intop_{S_1^t}\bar n\cdot\nabla\omega_z\omega_zdS_1dt'= -\nu\intop_0^t\intop_{-a}^a\partial_r\bigg(v_{\varphi,r}+{v_\varphi\over r}\bigg)\bigg(v_{\varphi,r}+{v_\varphi\over r}\bigg)\bigg|_{r=R}Rdzdt'\equiv I_1.
$$
Since $v_\varphi|_{r=R}=0$ $I_1$ takes the form
$$
I_1=-\nu\intop_0^t\intop_{-a}^a\bigg(v_{\varphi,rr}+{v_{\varphi,r}\over r}\bigg)v_{\varphi,r}\bigg|_{r=R}Rdzdt'.
$$
Projecting $(\ref{1.7})_2$ on $S_1$ yields
$$
-\nu\bigg(v_{\varphi,rr}+{1\over r}v_{\varphi,r}\bigg)=f_\varphi\quad {\rm on}\ \ {S_1}.
$$
Hence
\begin{equation}
I_1=R\intop_0^t\intop_{-a}^af_\varphi v_{\varphi,r}\bigg|_{r=R}dzdt'=\intop_0^t\intop_{-a}^af_\varphi\bigg(u_{,r}-{1\over R}u\bigg)\bigg|_{r=R}dzdt'.
\label{6.3}
\end{equation}
Using (\ref{1.13}) and (\ref{1.21}) in $J$ implies
$$\eqal{
J&=\intop_{\Omega^t}\bigg[-{1\over r^2}u_{,z}^2(\psi_{1,z}+r\psi_{1,rz})+\bigg({1\over r}u_{,r}\bigg)^2(r\psi_{1,zr}+2\psi_{1,z})\cr
&\quad-{1\over r^2}u_{,r}u_{,z}(-r\psi_{1,zz}+3\psi_{1,r}+r\psi_{1,rr})\bigg]dxdt'\equiv J_1+J_2+J_3.\cr}
$$
We integrate by parts in $J_1$ and use the boundary conditions on $S_2$. Then we have
$$\eqal{
J_1&=\intop_{\Omega^t}{1\over r^2}uu_{,zz}(\psi_{1,z}+r\psi_{1,rz})dxdt'+\intop_{\Omega^t}{1\over r^2}uu_{,z}(\psi_{1,zz}+r\psi_{1,rzz})dxdt'.\cr}
$$
Now, we estimate the particular terms in $J_1$,
$$\eqal{
J_{11}&=\bigg|\intop_{\Omega^t}uu_{,zz}{1\over r}\psi_{1,rz}dxdt'\bigg|\le |u|_{\infty,\Omega^t}|u_{,zz}|_{2,\Omega^t}\bigg|{1\over r}\psi_{1,rz}\bigg|_{2,\Omega^t},\cr
J_{12}&=\bigg|\intop_{\Omega^t}u{u_{,z}\over r}\psi_{1,rzz}dxdt'\bigg|\le |u|_{\infty,\Omega^t}\bigg|{u_{,z}\over r}\bigg|_{2,\Omega^t}|\psi_{1,rzz}|_{2,\Omega^t},\cr
J_{13}&=\bigg|\intop_{\Omega^t}u{u_{,z}\over r}{\psi_{1,zz}\over r}dxdt'\bigg|\le |u|_{\infty,\Omega^t}\bigg|{u_{,z}\over r}\bigg|_{2,\Omega^t} \bigg|{\psi_{1,zz}\over r}\bigg|_{2,\Omega^t},\cr
J_{14}&=\bigg|\intop_{\Omega^t}{1\over r^2}uu_{,zz}\psi_{1,z}dxdt'\bigg|= \bigg|\intop_{\Omega^t}uu_{,zz}{\psi_{1,z}\over r^2}dxdt'\bigg|\cr
&\le|u|_{\infty,\Omega^t}|u_{,zz}|_{2,\Omega^t}\bigg|{\psi_{1,z}\over r^2}\bigg|_{2,\Omega^t},\cr}
$$
where integration by parts can be performed in view of periodic boundary conditions on $S_2$.

Next, we consider $J_2$,
$$\eqal{
J_2&=\intop_{\Omega^t}{1\over r^2}u_{,r}^2(r\psi_{1,zr}+2\psi_{1,z})rdrdzdt'=\intop_{\Omega^t}{1\over r}u_{,r}^2(r\psi_{1,zr}+2\psi_{1,z})drdzdt'\cr
&=\intop_0^t\intop_{-a}^a\bigg[{1\over r}uu_{,r}(r\psi_{1,zr}+2\psi_{1,z})\bigg] \bigg|_{r=0}^{r=R}dzdt'\cr
&\quad-\intop_{\Omega^t}uu_{,rr}\bigg({1\over r}\psi_{1,zr}+{2\over r^2}\psi_{1,z}\bigg)dxdt'\cr
&\quad-\intop_{\Omega^t}uu_{,r}\bigg(\psi_{1,zrr}-{2\over r^2}\psi_{1,z}+{2\over r}\psi_{1,zr}\bigg)drdzdt',\cr}
$$
where the boundary term for $r=R$ vanishes because $u|_{r=R}=0$. To examine the boundary term at $r=0$ we recall from \cite{LW} the expresions near the axis of symmetry
$$
u=a_1(z,t)r^2+a_2(z,t)r^4+\cdots,
$$
so
$$
u_{,r}=2a_1(z,t)r+4a_2(z,t)r^3+\cdots
$$
Then
$$
{1\over r}uu_{,r}(r\psi_{1,zr}+2\psi_{1,z})\sim cr^2(r\psi_{1,zr}+2\psi_{1,z}).
$$
The above expression vanishes for $r=0$ because $\psi_{1,z}$ is bounded near the axis of symmetry.

Now, we estimate the particular terms in $J_2$,
$$\eqal{
&J_{21}=\bigg|\intop_{\Omega^t}uu_{,rr}{1\over r}\psi_{1,rz}dxdt'\bigg|\le |u|_{\infty,\Omega^t}|u_{,rr}|_{2,\Omega^t}\bigg|{1\over r}\psi_{1,zr}\bigg|_{2,\Omega^t},\cr
&J_{22}=\bigg|\intop_{\Omega^t}uu_{,rr}{1\over r^2}\psi_{1,z}dxdt'\bigg|\le |u|_{\infty,\Omega^t}|u_{,rr}|_{2,\Omega^t}\bigg|{1\over r^2}\psi_{1,z}\bigg|_{2,\Omega^t},\cr
&J_{23}=\bigg|\intop_{\Omega^t}u{u_{,r}\over r}\psi_{1,zrr}dxdt'\bigg|\le |u|_{\infty,\Omega^t}\bigg|{u_{,r}\over r}\bigg|_{2,\Omega^t}|\psi_{1,zrr}|_{2,\Omega^t},\cr
&J_{24}=\bigg|\intop_{\Omega^t}u{u_{,r}\over r}{1\over r^2}\psi_{1,z}dxdt'\bigg|\le|u|_{\infty,\Omega^t}\bigg|{u_{,r}\over r}\bigg|_{2,\Omega^t}\bigg|{1\over r^2}\psi_{1,z}\bigg|_{2,\Omega^t},\cr
&J_{25}=\bigg|\intop_{\Omega^t}u{u_{,r}\over r}{1\over r}\psi_{1,zr}dxdt'\bigg|\le|u|_{\infty,\Omega^t}\bigg|{u_{,r}\over r}\bigg|_{2,\Omega^t}\bigg|{1\over r}\psi_{1,zr}\bigg|_{2,\Omega^t}.\cr}
$$
Finally, we examine $J_3$. Integrating by parts with respect to $z$ and using the periodic boundary conditions on $S_2$, we have
$$\eqal{
J_3&=\intop_{\Omega^t}u{1\over r^2}u_{,rz}(-r\psi_{1,zz}+3\psi_{1,r}+r\psi_{1,rr})dxdt'\cr
&\quad+\intop_{\Omega^t}u{1\over r^2}u_{,r}(-r\psi_{1,zzz}+3\psi_{1,rz}+r\psi_{1,rrz})dxdt'.\cr}
$$
Now, we estimate the particular terms in $J_3$,
$$\eqal{
J_{31}&=\bigg|\intop_{\Omega^t}uu_{,rz}{1\over r}\psi_{1,zz}dxdt'\bigg|\le |u|_{\infty,\Omega^t}|u_{,rz}|_{2,\Omega^t}\bigg|{\psi_{1,zz}\over r}\bigg|_{2,\Omega^t},\cr
J_{32}&=\bigg|\intop_{\Omega^t}u{1\over r^2}u_{,rz}\psi_{1,r}dxdt'\bigg|= \bigg|\intop_{\Omega^t}{u\over r^{\varepsilon_0}}u_{,rz}{\psi_{1,r}\over r^{2-\varepsilon_0}}dxdt'\bigg|\cr
&\le|u|_{\infty,\Omega^t}^{1-\varepsilon_0}|v_\varphi|_{\infty,\Omega^t}^{\varepsilon_0} |u_{,rz}|_{2,\Omega^t}\bigg|{\psi_{1,r}\over r^2}\bigg|_{L_2(0,t;L_{2,\varepsilon_0}(\Omega))},\cr}
$$
where $\varepsilon_0>0$ can be chosen as small as we want. Thus
$$\eqal{
J_{33}&=\bigg|\intop_{\Omega^t}{u\over r^{\varepsilon_0}}u_{,rz}{1\over r^{1-\varepsilon_0}}\psi_{1,rr}dxdt'\bigg|\cr
&\le|u|_{\infty,\Omega^t}^{1-\varepsilon_0}|v_\varphi|_{\infty,\Omega^t}^{\varepsilon_0} |u_{,rz}|_{2,\Omega^t}\bigg|{\psi_{1,rr}\over r^{1-\varepsilon_0}}\bigg|_{2,\Omega^t},\cr
J_{34}&=\bigg|\intop_{\Omega^t}u{u_{,r}\over r}\psi_{1,zzz}dxdt'\bigg|\le |u|_{\infty,\Omega^t}\bigg|{u_{,r}\over r}\bigg|_{2,\Omega^t}|\psi_{1,zzz}|_{2,\Omega^t},\cr
J_{35}&=\bigg|\intop_{\Omega^t}u{u_{,r}\over r}{1\over r}\psi_{1,rz}dxdt'\bigg|\le|u|_{\infty,\Omega^t}\bigg|{u_{,r}\over r}\bigg|_{2,\Omega^t}\bigg|{\psi_{1,rz}\over r}\bigg|_{2,\Omega^t},\cr
J_{36}&=\bigg|\intop_{\Omega^t}u{u_{,r}\over r}\psi_{1,rrz}dxdt'\bigg|\le |u|_{\infty,\Omega^t}\bigg|{u_{,r}\over r}\bigg|_{2,\Omega^t}|\psi_{1,rrz}|_{2,\Omega^t}.\cr}
$$
Summarizing the above estimates, we obtain
$$\eqal{
&|J|\le c|u|_{\infty,\Omega^t}\bigg[(|u_{,zz}|_{2,\Omega^t}+|u_{,zr}|_{2,\Omega^t}+ |u_{,rr}|_{2,\Omega^t})\cdot\cr
&\quad\cdot\bigg(\bigg|{1\over r}\psi_{1,rz}\bigg|_{2,\Omega^t}+\bigg|{1\over r} \psi_{1,zz}\bigg|_{2,\Omega^t}+\bigg|{1\over r^2}\psi_{1,z}\bigg|_{2,\Omega^t}\bigg)\cr
&\quad+\bigg(\bigg|{u_{,r}\over r}\bigg|_{2,\Omega^t}+\bigg|{u_{,z}\over r}\bigg|_{2,\Omega^t}\bigg)\bigg(|\psi_{1,rzz}|_{2,\Omega^t}+|\psi_{1,zrr}|_{2,\Omega^t}\cr
&\quad+|\psi_{1,zzz}|_{2,\Omega^t}+\bigg|{1\over r}\psi_{1,zz}\bigg|_{2,\Omega^t}+\bigg|{1\over r}\psi_{1,zr}\bigg|_{2,\Omega^t}+\bigg|{1\over r^2}\psi_{1,z}\bigg|_{2,\Omega^t}\bigg)\bigg]\cr
&\quad+c|u|_{\infty,\Omega^t}^{1-\varepsilon_0} |v_\varphi|_{\infty,\Omega^t}^{\varepsilon_0}|u_{,rz}|_{2,\Omega^t}\bigg(\bigg| {\psi_{1,rr}\over r}\bigg|_{L_2(0,t;L_{2,\varepsilon_0}(\Omega))}+\bigg|{\psi_{1,r}\over r^2}\bigg|_{L_2(0,t;L_{2,\varepsilon_0}(\Omega))}\bigg).\cr}
$$
Using (\ref{2.7}), (\ref{5.2}), (\ref{5.3}) and the estimates from (\ref{2.1})
$$\eqal{
&\bigg|{u_{,r}\over r}\bigg|_{2,\Omega^t}\le\bigg|{v_\varphi\over r}\bigg|_{2,\Omega^t}+|v_{\varphi,r}|_{2,\Omega^t}\le cD_1,\cr
&\bigg|{u_{,z}\over r}\bigg|_{2,\Omega^t}\le|v_{\varphi,z}|_{2,\Omega^t}\le cD_1\cr}
$$
we obtain the following estimate for $J$,
$$\eqal{
&|J|\le c[D_2(D_3+D_4)+D_1D_2]\bigg(|\psi_{1,rrz}|_{2,\Omega^t}+|\psi_{1,rzz}|_{2,\Omega^t}\cr
&\quad+|\psi_{1,zzz}|_{2,\Omega^t}+\bigg|{1\over r}\psi_{1,rz}\bigg|_{2,\Omega^t}+\bigg|{1\over r}\psi_{1,zz}\bigg|_{2,\Omega^t}+\bigg|{1\over r^2}\psi_{1,z}\bigg|_{2,\Omega^t}\bigg)\cr
&\quad+cD_2^{1-\varepsilon_0}D_3|v_\varphi|_{\infty,\Omega^t}^{\varepsilon_0} \bigg(\bigg|{1\over r}\psi_{1,rr}\bigg|_{L_2(0,t;L_{2,\varepsilon_0}(\Omega))}+\bigg|{1\over r^2}\psi_{1,r}\bigg|_{L_2(0,t;L_{2,\varepsilon_0}(\Omega))}\bigg)\equiv J'.\cr}
$$
From (\ref{3.4}), we have (recall that $\omega_1=\Gamma$)
\begin{equation}
|\psi_{1,rrz}|_{2,\Omega^t}+|\psi_{1,rzz}|_{2,\Omega^t}+|\psi_{1,zzz}|_{2,\Omega^t}\le c|\Gamma_{,z}|_{2,\Omega^t}.
\label{6.4}
\end{equation}
Estimates (\ref{3.22}) and (\ref{3.25}) imply
\begin{equation}
\bigg|{\psi_{1,rz}\over r}\bigg|_{2,\Omega^t}+\bigg|{\psi_{1,zz}\over r}\bigg|_{2,\Omega^t}+\bigg|{\psi_{1,z}\over r^2}\bigg|_{2,\Omega^t}\le c|\Gamma_{,z}|_{2,\Omega^t}.
\label{6.5}
\end{equation}
Finally, (\ref{3.33}) yields
\begin{equation}
\bigg|{1\over r}\psi_{1,rr}\bigg|_{L_2(0,t;L_{2,\varepsilon_0}(\Omega))}+\bigg|{1\over r^2}\psi_{1,r}\bigg|_{L_2(0,t;L_{2,\varepsilon_0}(\Omega))}\le cR^{\varepsilon_0}\|\Gamma\|_{1,2,\Omega^t}
\label{6.6}
\end{equation}
Recall that (\ref{6.5}) is valid for $\psi_1|_{r=0}=0$.

This restriction implies that $v_z|_{r=0}=0$, so it is a strong restriction on solutions proved in this paper.

Using (\ref{6.4})--(\ref{6.6}) in $J'$ yields
$$
J'\le cD_2(D_1+D_3+D_4)|\Gamma_{,z}|_{2,\Omega^t}+cD_2^{1-\varepsilon_0}D_3 |v_\varphi|_{\infty,\Omega^t}^{\varepsilon_0}\|\Gamma\|_{1,2,\Omega^t}.
$$
In view of Lemma \ref{l5.1} the term $I_1$ introduced in (\ref{6.3}) is bounded by
$$
I\le c|f_\varphi|_{2,S_1^t}\|u\|_{2,2,\Omega^t}\le c|f_\varphi|_{2,S_1^t}(D_3+D_4).
$$
Using the estimates in (\ref{6.2}), we obtain
\begin{equation}\eqal{
&\|\omega_r\|_{V(\Omega^t)}+\|\omega_z\|_{V(\Omega^t)}+|\Phi|_{2,\Omega^t}\cr
&\le cD_2(D_1+D_3+D_4)|\Gamma_{,z}|_{2,\Omega^t}+cD_2^{1-\varepsilon_0}D_3 |v_\varphi|_{\infty,\Omega^t}^{\varepsilon_0}\|\Gamma\|_{1,2,\Omega^t}\cr
&\quad+c(|F_r|_{6/5,2,\Omega^t}+|F_z|_{6/5,2,\Omega^t})+c(|\omega_r(0)|_{2,\Omega}\cr
&\quad+|\omega_z(0)|_{2,\Omega})+c|f_\varphi|_{2,S_1^t}(D_3+D_4),\cr}
\label{6.7}
\end{equation}
where we used
$$\eqal{
&\bigg|\intop_\Omega(F_r\omega_r+F_z\omega_z)dxdt'\bigg|\le \varepsilon(|\omega_r|_{6,\Omega}^2+|\omega_z|_{6,\Omega}^2)\cr
&\quad+c(1/\varepsilon)(|F_r|_{6/5,\Omega}^2+|F_z|_{6/5,\Omega}^2).\cr}
$$
Hence (\ref{6.7}) implies (\ref{6.1}) and concludes the proof.
\end{proof}

\renewcommand{\theequation}{B.\arabic{equation}}
\setcounter{equation}{0}

\section*{B. Existence of regular solutions to (\ref{1.1}) for\\ \hbox to19pt{} small data}\label{sB}

Recall the quantities
\begin{equation}
u_1={v_\varphi\over r},\ \ \omega_1={\omega_\varphi\over r},\ \ \psi_1={\psi\over r},\ \ f_1={f_\varphi\over r},\ \ F_1={F_\varphi\over r}.
\label{B.1}
\end{equation}
In view of \cite{HL} system (\ref{1.1}) is equivalent to the following one
\begin{equation}\eqal{
&u_{1,t}+v\cdot\nabla u_1-\nu\bigg(\Delta u_1+{2\over r}u_{1,r}\bigg)=2u_1\psi_{1,z}+f_1,\cr
&\omega_{1,t}+v\cdot\nabla\omega_1-\nu\bigg(\Delta\omega_1+{2\over r}\omega_{1,r}\bigg)=2u_1u_{1,z}+F_1,\cr
&-\Delta\psi_1-{2\over r}\psi_{1,r}=\omega_1,\cr
&\textrm{periodic boundary condtions on}\ S_2,\cr
&u_1|_{t=0}=u_1(0),\cr
&\omega_1|_{t=0}=\omega_1(0).\cr}
\label{B.2}
\end{equation}
Functions $u_1$, $\omega_1$, $\psi_1$ have compact support with respect to variable $r$.

Multiplying $(\ref{B.2})_1$ by $u_1|u_1|^2$, integrating over $\Omega$ and using boundary conditions yield
\begin{equation}
{d\over dt}|u_1|_{4,\Omega}^4+\nu|u_1|_{4,\Omega}^4\le c|\omega_1|_{2,\Omega}^2|u_1|_{4,\Omega}^4+c|f_1|_{4,\Omega}^4.
\label{B.3}
\end{equation}
Multiply $(\ref{B.2})_2$ by $\omega_1$, integrate over $\Omega$ and exploit boundary conditions. Then we have
\begin{equation}
{d\over dt}|\omega_1|_{2,\Omega}^2+\nu|\omega_1|_{2,\Omega}^2\le c|u_1|_{4,\Omega}^4+c|F_1|_{2,\Omega}^2.
\label{B.4}
\end{equation}
Introduce the quantity
\begin{equation}
X(t)=|u_1(t)|_{4,\Omega}^4+|\omega_1(t)|_{2,\Omega}^2.
\label{B.5}
\end{equation}
Then (\ref{B.3}) and (\ref{B.4}) imply
\begin{equation}
{d\over dt}X+\nu X\le c_0X^2+G(t),
\label{B.6}
\end{equation}
where
\begin{equation}
G(t)=c(|f_1(t)|_{4,\Omega}^4+|F_1(t)|_{2,\Omega}^2).
\label{B.7}
\end{equation}
Consider (\ref{B.6}) on the time interval $(0,T)$. Assume that for $t\in(0,T)$ the following inequality holds
\begin{equation}
G(t)\le k_0.
\label{B.8}
\end{equation}
Then (\ref{B.6}) takes the form
\begin{equation}
{d\over dt}S\le c_0k_0\bigg({1\over k_0}X^2+1\bigg).
\label{B.9}
\end{equation}
Let $X=\alpha X'$. Then
$$
{d\over dt}X'\le{c_0k_0\over\alpha}\bigg({\alpha^2\over k_0}X'^2+1\bigg).
$$
Setting $\alpha^2=k_0$ yields
\begin{equation}
{d\over dt}X'\le c_0\sqrt{k_0}(X'^2+1).
\label{B.10}
\end{equation}
Integrating (\ref{B.10}) with respect to time implies
$$
\arctg X'(t)-\arctg X'(0)\le c_0\sqrt{k_0}t.
$$
Hence
$$
X'(t)\le{\tg(c_0\sqrt{k_0}t)+X'(0)\over 1-X'(0)\tg(c_0\sqrt{k_0}t)}.
$$
Recalling that $X'(0)={X(0)\over\sqrt{k_0}}$ and setting $y=c_0\sqrt{k_0}t$ we obtain
\begin{equation}
X(t)\le{(\tg y+{X(0)\over\sqrt{k_0}})\sqrt{k_0}\over 1-X(0)c_0t{\tg y\over y}}\le \beta(T),
\label{B.11}
\end{equation}
where $t\le T$. Hence for $T$ large (\ref{B.11}) holds for sufficiently small $X(0)$ and $k_0$.

Consider (\ref{B.6}) in the interval $(0,T)$. Using (\ref{B.11}) we can write (\ref{B.6}) in the form
\begin{equation}
{d\over dt}X+\nu_*X\le G(t),
\label{B.12}
\end{equation}
where$\nu_*=\nu-c_0\beta$.

Integrating (\ref{B.12}) with respect to time yields
\begin{equation}
X(t)\le e^{-\nu_*t}\intop_0^tG(t')e^{\nu_*t'}dt'+e^{-\nu_8t}X(0).
\label{B.13}
\end{equation}
Setting $t=T$ implies
\begin{equation}
X(T)\le\intop_0^TG(t)dt+e^{-\nu_*T}X(0).
\label{B.14}
\end{equation}
For sufficiently small $X(0)$, $k_0$ the time interval $(0,T)$ can be choosen large. Then (\ref{B.14}) can imply that
\begin{equation}
X(T)\le X(0).
\label{B.15}
\end{equation}
Therefore, the previous considerations can be performed for any time interval $(kT,(k+1)T)$, $k\in\N$.

\bibliographystyle{amsplain}
\begin{thebibliography}{99}
\bibitem[LW]{LW} Liu, J.G.; Wang, W.C.: Characterization and regularity for axisymmetric solenoidal vector fields with application to Navier-Stokes equations, SIAM J. Math. Anal. 41 (2009), 1825--1850.
\bibitem[CFZ]{CFZ} Chen, H.; Fang, D.; Zhang, T.: Regularity of 3d axisymmetric Navier-Stokes equations, Disc. Cont. Dyn. Syst. 37 (4) (2017), 1923--1939.
\bibitem[L1]{L1} Ladyzhenskaya, O.A.: Mathematical Theory of Viscous Incompressible Flow, Nauka, Moscow 1970 (in Russian); Second English edition, revised and enlarged, translated by Richard A. Silverman and John Chu. Mathematics and Its Applications, vol. 2, xviii+224 pp. Gordon and Breach, Science Publishers, New York.
\bibitem[BIN]{BIN} Besov, O.V.; Il'in, V.P.; Nikolskii, S.M.: Integral Representations of Functions and Imbedding Theorems, Nauka, Moscow 1975 (in Russian); English transl. vol. I. Scripta Series in Mathematics. V.H. Winston, New York (1978).
\bibitem[K]{K} Kondrat'ev, V.: Boundary value problems for elliptic equations in domains with conical or angular points, Trudy Moskov. Mat. Obshch. 16 (1967), 209--292 (in Russian); English transl.: Trans. Mosc. Math. Soc. 16 (1967), 227--313.
\bibitem[NZ]{NZ} Nowakowski, B.; Zaj\c{a}czkowski, W.M.: On weighted estimates for the stream function of axially-symmetric solutions to the Navier-Stokes equations in a bounded cylinder, doi: 10.48550/ArXiv.2210.15729.
\bibitem[LSU]{LSU} Ladyzhenskaya, O.A.; Solonnikov, V.A.; Uraltseva, N.N.: Linear and quasilinear equations of parabolic type, Nauka, Moscow 1967 (in Russian).
\bibitem[L2]{L2} Ladyzhenskaya, O.A.: Uniquae global solvability of the three-dimensional Cauchy problem for the Navier-Stokes equations in the presence of axial symmetry, Zap. Nauchn. Sem. LOMI, 7 (1968), 155--177 (in Russian).
\bibitem[UY]{UY} Ukhovskii, M.R.; Yudovich, V.I.: Axially symmetric flows of ideal and viscous fluids filling the whole space, J. Appl. Math. Mech. 32 (1968), 52--61.
\bibitem[HL]{HL} Hou, T,Y.; Li, C.: Dynamic stability of the three-dimensional axisymmetric Navier-Stokes equations with swirl, Comm. Pure Appl. Math. 61 (5) (2008), 661--697.
\bibitem[NZ1]{NZ1} Nowakowski, B.; Zaj\c{a}czkowski, W.M.: Global regular axially-symmetric solutions to the Navier-Stokes equations with small swirl, arXiv: 2302.00730.
\end {thebibliography}
\end{document}